\newtheorem{theorem}{Theorem}[section]
\newtheorem{proposition}[theorem]{Proposition}
\newtheorem{lemma}[theorem]{Lemma}
\newtheorem{remark}[theorem]{Remark}
\newtheorem{corollary}[theorem]{Corollary}
\newtheorem{question}[theorem]{Question}
\title{Separability properties of automorphisms of graph products of groups}
\author{Michal Ferov}
\address{Building 54, Mathematical Sciences, University of Southampton, Highfield, Southampton SO17 1BJ, UK}
\email{michalferov@gmail.com}
\keywords{graph products, residual properties, pro-$\C$ topologies, pointwise-inner automorphisms, outer automorphisms.}
\subjclass[2010]{20E06, 20E26, 20E36, 20E45, 20F28, 20F65}
\let\star\relax
\let\S\relax
\let\P\relax
\newcommand{\bigast}{\mathop{\scalebox{1.5}{\raisebox{-0.2ex}{$\ast$}}}}
\DeclareMathOperator{\C}{\mathcal{C}}
\DeclareMathOperator{\NC}{\mathcal{N_C}}
\DeclareMathOperator{\Aut}{Aut}
\DeclareMathOperator{\Out}{Out}
\DeclareMathOperator{\Inn}{Inn}
\DeclareMathOperator{\End}{End}
\DeclareMathOperator{\Hom}{Hom}
\DeclareMathOperator{\Tor}{Tor}
\DeclareMathOperator{\Autpi}{Aut_{PI}}
\DeclareMathOperator{\fileq}{\leq_{\text{f.i.}}}
\DeclareMathOperator{\normfileq}{\unlhd_{\text{f.i.}}}
\DeclareMathOperator{\normleq}{\unlhd}
\DeclareMathOperator{\link}{link}
\DeclareMathOperator{\star}{star}
\DeclareMathOperator{\supp}{supp}
\DeclareMathOperator{\LL}{LL}
\DeclareMathOperator{\FL}{FL}
\DeclareMathOperator{\S}{S}
\DeclareMathOperator{\s}{s}
\DeclareMathOperator{\P}{P}
\DeclareMathOperator{\p}{p}
\DeclareMathOperator{\id}{id}
\DeclareMathOperator{\PT}{\mathcal{PT}}
\begin{document}

\begin{abstract}
	We study properties of automorphisms of graph products of groups. We show that graph product $\Gamma\mathcal{G}$ has non-trivial pointwise inner automorphisms if and only if some vertex group corresponding to a central vertex has non-trivial pointwise inner automorphisms. We use this result to study residual finiteness of $\Out(\Gamma \mathcal{G})$. We show that if all vertex groups are finitely generated residually finite and the vertex groups corresponding to central vertices satisfy certain technical (yet natural) condition, then $\Out(\Gamma\mathcal{G})$ is residually finite. Finally, we generalise this result to graph products of residually $p$-finite groups to show that if $\Gamma\mathcal{G}$ is a graph product of finitely generated residually $p$-finite groups such that the vertex groups corresponding to central vertices satisfy the $p$-version of the technical condition then $\Out(\Gamma\mathcal{G})$ is virtually residually $p$-finite. We use this result to prove bi-orderability of Torreli groups of some graph products of finitely generated residually torsion-free nilpotent groups. 
\end{abstract}

\setcounter{tocdepth}{1}
\maketitle
\tableofcontents

%%%%%%%%%%%%%%%%%%%%%%%%%%%%%%%%%%%%%%%%%%%%%%%%%%%%%%%%%%%%%%%%%%%%%%%%%%%%%%%%%%%%%%%%%%%%%%%%%%%%%%%%%%%%%%%%%%%%
%%%%%%%%%%%%%%%%%%%%%%%%%%%%%%%%%%%%%%%%%%%%%%%%%%%%%%%%%%%%%%%%%%%%%%%%%%%%%%%%%%%%%%%%%%%%%%%%%%%%%%%%%%%%%%%%%%%%
\section{Introduction and motivation}\label{Introduction}
%%%%%%%%%%%%%%%%%%%%%%%%%%%%%%%%%%%%%%%%%%%%%%%%%%%%%%%%%%%%%%%%%%%%%%%%%%%%%%%%%%%%%%%%%%%%%%%%%%%%%%%%%%%%%%%%%%%%
\subsection{Motivation}We say that a $G$ is \emph{residually finite} (RF) if for every $g \in G \setminus \{1\}$ there is a finite group $F$ and a homomorphism $\varphi \colon G \to F$ such that $\varphi(g) \neq 1$ in $F$. The main motivation to study residually finite groups is that they can be approximated by their finite quotients. In case of finitely presented groups this approximation can be used to solve the word problem: Mal'cev \cite{malcev} constructed an algorithm that uniformly solves the word problem in the class of finitely presented RF groups. 

Baumslag \cite{baumslag} proved that if $G$ is a finitely generated RF group then $\Aut(G)$, the group of automorphisms of $G$, is RF as well. One could ask whether this result can be generalised to $\Out(G) \cong \Aut(G)/\Inn(G)$, the group of outer automorphisms? Negative answer to this question was provided by Bumagin and Wise in \cite{bumagina} when they proved that for every finitely presented group $O$ there is a finitely generated residually finite group $G$ such that $\Out(G) \cong O$. The question that naturally arises is: what properties does a finitely generated RF group $G$ need to satisfy to ensure that $\Out(G)$ is RF as well?

\subsection{Grossman's criterion}
An map $\phi: G \to G$ is \emph{pointwise inner} if $\phi(g)$ is conjugate to $g$ for every $g \in G$. Let $\Autpi(G)$ denote the set of all pointwise inner automorphisms of $G$. We say that a group $G$ has \emph{Grossman's property (A)} if every pointwise inner automorphism of $G$ is inner, i.e. if $\Autpi(G) = \Inn(G)$. We say that $G$ is \emph{conjugacy separable} (CS) if for every tuple $f,g \in G$ such that $f$ is not conjugate to $g$ there exists a finite group $F$ and a homomorphism $\varphi \colon G \to F$ such that $\varphi(f)$ is not conjugate to $\varphi(g)$. Grossman \cite[Theorem 1]{grossman} proved that if $G$ is a finitely generated CS group with Grossman's property (A) then $\Out(G)$ is residually finite. We call groups that satisfy this criterion \emph{Grossmanian groups}, i.e. group $G$ is Grossmanian if $G$ is finitely generated CS group with Grossman's property (A). However, these are not necessary conditions; see Section \ref{cias} for the discussion.

\subsection{Statement of results.}Another natural question to ask is how does the property of having a residually finite group of outer automorphisms behave under group theoretic constructions? In this paper we study the case of graph products of groups, which naturally generalise the notion of free products and direct products in the category of groups.

 Let $\Gamma$ be a simplicial graph, i.e. $V\Gamma$ is a set and $E\Gamma \subseteq \binom{V\Gamma}{2}$, and let $\mathcal{G} = \{G_v \mid v \in V\Gamma\}$ be a family of non-trivial groups. The group $\Gamma \mathcal{G}$, the \emph{graph product} of the family $\mathcal{G}$ with respect to graph $\Gamma$, is the quotient of the free product $\ast_{v \in V\Gamma}G_v$ modulo relations of the form
\begin{displaymath}
	 g_u g_v = g_v g_u \quad \forall g_u \in G_u, \forall g_v \in G_v \mbox{ whenever }\{u,v\}\in E\Gamma.
\end{displaymath}
The groups $G_v$ are called \emph{vertex groups}. In this study we will be considering only finite graph products, i.e. $|V\Gamma| < \infty$. Clearly, if $\Gamma$ is totally disconnected then the graph product $\Gamma\mathcal{G}$ is equal to $\ast_{v \in V\Gamma}G_v$, the free product of the vertex groups, and similarly, if $\Gamma$ is complete then $\Gamma\mathcal{G}$ is equal to $\times_{v \in V\Gamma}G_v$, the direct product of the vertex groups. Note that we will always assume that the vertex groups are non-trivial, i.e. $G_v \neq \{1\}$ for all $v \in V\Gamma$.

In the case when all vertex groups are infinite cyclic we are talking about \emph{right angled Artin groups} (RAAGs). If all vertex groups are cyclic of order two then we are talking about \emph{right angled Coxeter groups} (RACGs). We quickly list some partial results on residual finiteness of outer automorphisms of graph products of residually finite groups.
In \cite{raags} Minasyan showed that if $G$ is a finitely generated RAAG then $G$ is CS and has Grossman's property (A), hence $\Out(G)$ is RF by Grossmans criterion. Independently of Minasyan, Charney and Vogtmann \cite{vogtmann} proved that outer automorphism groups of finitely generated RAAGs are RF. Caprace and Minasyan \cite{caprace} proved that if $G$ is a finitely generated RACG then $\Out(G)$ is RF.

Following the results presented in \cite{osin} residual finiteness of outer automorphism groups of free products of finitely generated RF groups is well understood. However, it is not known whether the class of finitely generated RF groups with RF group of outer automorphisms is closed under direct products (see Question \ref{question 1}). To overcome this obstacle we introduce the following class of groups.

We say that a group $G$ is \emph{inner automorphism separable} (IAS) if every non-trivial outer automorphism of $G$ can be realised as a non-trivial outer automorphism of some finite quotient of $G$ (see Section \ref{cias} for formal definition). Obviously, if $G$ is IAS then $\Out(G)$ is RF. In Section \ref{cias} we show that the class of IAS groups is closed under taking direct products (see Corollary \ref{ias direct}). In Section \ref{examples} we give examples of IAS groups. In particular, we show that Grossmanian groups are IAS (see Corollary \ref{grossmanian}) and that virtually polycyclic groups are IAS (see Lemma \ref{polycyclic groups are IS}).

For $v \in V\Gamma$ we define $\star(u) = \{v \in V\Gamma \mid \{u,v\}\in E\Gamma\} \cup \{u\}$. We say that a subset $U \subseteq V\Gamma$ is \emph{coneless} if $\bigcap_{u \in U}\star(u) = \emptyset$, i.e. if the vertices in $U$ do not have a common neighbour.
In Section \ref{pointwise inner automorphisms} we study pointwise inner endomorphisms of graph products and we prove the following theorem.
%{\color{red}%\todo{Stronger version of the statement is given and I am referencing previously know results}
\begin{theorem}
\label{no central vertex implies property A}
	Let $\Gamma$ be a graph and suppose that there is $U \subseteq V\Gamma$ such that $|U| < \infty$ and $U$ is coneless. Let $\mathcal{G} = \{G_v \mid v \in V\Gamma\}$ be a family of non-trivial groups and let $G = \Gamma \mathcal{G}$ be the graph product of $\mathcal{G}$ with respect to $\Gamma$. Then every pointwise inner endomorphism of $G$ is an inner automorphism. In particular, the group $\Gamma \mathcal{G}$ has Grossman's property (A).
\end{theorem}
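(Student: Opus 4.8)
The plan is to show that a pointwise inner endomorphism $\phi$ of $G=\Gamma\mathcal{G}$ must coincide with conjugation by a single element $c$, the coneless set $U$ being exactly the combinatorial input that rigidifies the conjugator. Note first that $\phi$ is automatically injective: if $\phi(g)=1$ then $g$ is conjugate to $1$, hence $g=1$, so once we produce $c$ with $\phi(g)=cgc^{-1}$ for all $g$ we get that $\phi$ is surjective as well, i.e.\ an inner automorphism. The tools I would lean on are the normal form for graph products together with the support $\supp(g)$; the invariance of the support of a cyclically reduced element under conjugacy; the centralizer description $C_G(G_v)=Z(G_v)\times G_{\link(v)}\subseteq G_{\star(v)}$; and the rigidity of parabolic subgroups (a conjugate of a vertex group that lands inside a standard parabolic is again parabolic, with the conjugator controlled modulo $C_G(G_v)$).

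First I would fix a vertex $v$ and $g\in G_v\setminus\{1\}$. Since $\phi(g)$ is conjugate to the cyclically reduced element $g$ with $\supp(g)=\{v\}$, the cyclically reduced form of $\phi(g)$ again has support $\{v\}$, and the conjugacy criterion confines it to a $G_v$-conjugate of $g$. Running this over all of $G_v$ and using that $\phi$ is a homomorphism, I would show that $\phi(G_v)=c_vG_vc_v^{-1}$ for some $c_v\in G$, so that untwisting by $c_v$ yields a pointwise inner automorphism $\alpha_v$ of $G_v$; the conjugator $c_v$ is then unique modulo $C_G(G_v)$, and I would normalise it to carry no syllable from $G_{\star(v)}$ on the right. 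I stress that at this stage $\alpha_v$ need \emph{not} be inner in $G_v$: indeed for a central (direct-factor) vertex a genuinely non-inner pointwise inner automorphism of $G_v$ extends to a non-inner pointwise inner automorphism of $G$, so this is precisely the phenomenon the coneless hypothesis must exclude.

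The coneless condition enters through centralizers. Being coneless is preserved under enlarging $U$, so I may replace $U$ by any finite $W\supseteq U$, in particular by $U\cup\{v\}$ for an arbitrary vertex $v$, while keeping it finite and coneless. The crucial point is that for a coneless set $W$ one has $C_G(G_W)=\bigcap_{w\in W}C_G(G_w)=1$: a nontrivial centralising element would, by the centralizer description, force a vertex in $\bigcap_{w\in W}\star(w)$, contradicting $\bigcap_{w\in W}\star(w)=\emptyset$. This serves two purposes. It makes any conjugator realising $\phi$ on $G_W$ unique; and, since $W$ coneless guarantees for each $v\in W$ a vertex $u\in W$ with $v\notin\star(u)$ (so $G_v$ and $G_u$ generate a free product $G_v\ast G_u$), it lets me restrict $\phi$ to such a free product, which is itself a two-vertex coneless graph product, and use free-product/parabolic rigidity to force $\alpha_v$ to be inner in $G_v$ and absorbed into $c_v$.

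It then remains to glue the per-vertex conjugators, and this I expect to be the main obstacle. The delicate case is a pair of adjacent vertices $v,w$: applying $\phi$ to the commuting relations between $G_v$ and $G_w$ shows that $c_w^{-1}c_v$ conjugates $G_v$ into $C_G(G_w)\subseteq G_{\star(w)}$, whereupon parabolic rigidity pins $c_w^{-1}c_v$ down modulo $C_G(G_v)$ and $C_G(G_w)$. Propagating this compatibility across all of $W$ and invoking the uniqueness $C_G(G_W)=1$ forces the normalised conjugators to coincide into a single $c_W$; letting $W$ range over $U\cup\{v\}$ as $v$ varies, these $c_W$ agree on overlaps and assemble into one global $c$ with $\phi(g)=cgc^{-1}$ for every $g$ in every vertex group. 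Since the vertex groups generate $G$, this gives $\phi(g)=cgc^{-1}$ for all $g\in G$, an inner automorphism, and in particular $G$ has Grossman's property (A). The genuinely hard part is orchestrating the triviality of $C_G(G_W)$ so as to simultaneously trivialise the residual vertex automorphisms $\alpha_v$ and reconcile the conjugators across edges; Steps concerning supports and centralizers are comparatively routine once the normal-form machinery is in place.
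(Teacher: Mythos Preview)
Your proposal assembles the right ingredients—support analysis via the conjugacy criterion, the structure $N_G(G_v)=G_{\star(v)}$, and free-product rigidity for non-adjacent pairs—but the organisation differs from the paper's, and the step you yourself flag as ``the main obstacle'' is where the argument is genuinely incomplete.

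Two specific issues. First, you cannot ``restrict $\phi$ to such a free product'' $G_v\ast G_u$ in order to tame $\alpha_v$: a priori $\phi(G_{\{u,v\}})\not\subseteq G_{\{u,v\}}$, and the conjugators $c_u,c_v$ you have produced are different, so there is no single endomorphism of $G_u\ast G_v$ available until \emph{after} the very gluing you are trying to justify. Second, the triviality $C_G(G_W)=1$ yields uniqueness of a global conjugator \emph{provided one exists}; it does not manufacture one from the local data $\{c_v\}$. Your ``propagating compatibility'' sketch is really an existence claim dressed as a uniqueness claim, and the edge-by-edge constraints you write down (e.g.\ $c_w^{-1}c_v$ conjugates $G_v$ into $G_{\star(w)}$) do not obviously force all normalised $c_v$ to coincide without an inductive mechanism.

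The paper supplies precisely that mechanism by a maximality trick that bypasses gluing. One replaces $\phi_0$ by some $\phi\in\Inn(G)\phi_0$ chosen so that the set $V_0\subseteq U$ of vertices with $\phi(G_v)\subseteq G_v$ is maximal. A short minimal-length-conjugator lemma shows: if $\phi(a)\in G_u$ and $\phi(ab)\sim_G ab$ with $a\in G_u$, $b\in G_v$, then the minimal $w$ realising $\phi(b)=wb'w^{-1}$ lies in $N_G(G_u)=G_{\star(u)}$. Hence for $v\in U\setminus V_0$ the required extra conjugator lies in $\bigcap_{u\in V_0}G_{\star(u)}$, and postcomposing with that inner automorphism enlarges $V_0$—contradicting maximality. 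Thus $V_0=U$, and now conelessness gives $\bigcap_{u\in U}G_{\star(u)}=G_\emptyset=\{1\}$ (note it is the intersection of \emph{normalisers}, not centralisers, that collapses), so every further minimal conjugator is trivial and $\phi$ stabilises \emph{all} vertex groups. Only at this point does one legitimately restrict $\phi$ to $G_{\{u,v\}}\cong G_u\ast G_v$ for non-adjacent $u,v$ and read off $\phi\restriction_{G_v}=\id_{G_v}$ from the free-product conjugacy criterion, whence $\phi=\id_G$ and $\phi_0\in\Inn(G)$. The moral: rather than constructing per-vertex conjugators and then reconciling them, keep adjusting $\phi$ within its $\Inn(G)$-coset and let the coneless condition shrink the ambiguity to zero.
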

In case when the graph $\Gamma$ is totally disconnected, i.e. when $\Gamma \mathcal{G} = \bigast_{v \in V\Gamma}G_v$ this has been proved by Minasyan and Osin in \cite{osin}.

We say that a graph $\Gamma$ is \emph{irreducible} if $V\Gamma$ cannot be expressed as a disjoint union of two non-empty subsets $V\Gamma = A \cup B$ such that $\{v_a, v_b\} \in E\Gamma$ for every $v_a \in A$ and $v_b \in B$. In case when the graph is irreducible then the result of Theorem \ref{no central vertex implies property A} can be recovered from \cite[Theorem 2.12]{acyl} and \cite[Theorem 7.5]{sisto}. 
%}

We say that a vertex $v \in V\Gamma$ is \emph{central} in $\Gamma$ if $\{u,v\} \in E\Gamma$ for all $u \in V\Gamma \setminus \{v\}$, i.e. $v$ is central if it is adjacent to all the vertices of $\Gamma$ (apart from itself). It can be easily seen that in case when the graph $\Gamma$ is finite, then it satisfies the assumption of Theorem \ref{no central vertex implies property A} if and only if it does not contain central vertices. As a consequence of Theorem \ref{no central vertex implies property A} we give the following characterisation of finite graph products with Grossman's property (A).
\begin{corollary} 
	\label{theorem A}
	Let $\Gamma$ be a finite graph and let $\mathcal{G} = \{G_v \mid v \in V\Gamma\}$ be a family of non-trivial groups. The group $G = \Gamma \mathcal{G}$ has Grossman's property (A) if and only if all vertex groups corresponding to central vertices of $\Gamma$ have Grossman's property (A).
\end{corollary}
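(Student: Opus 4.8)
The plan is to exploit the direct-product structure forced by the central vertices and to reduce property (A) of the whole graph product to property (A) of the individual vertex groups. Let $Z \subseteq V\Gamma$ denote the set of central vertices. Since any two central vertices are adjacent and every central vertex is adjacent to all remaining vertices, the graph $\Gamma$ is the join of the complete graph $\Gamma[Z]$ and the induced subgraph $\Gamma'' = \Gamma[V\Gamma\setminus Z]$. By the standard behaviour of graph products under joins this yields a direct decomposition
\begin{displaymath}
	G = \Gamma\mathcal{G} \cong \Big(\prod_{v \in Z} G_v\Big) \times \Gamma''\mathcal{G}'',
\end{displaymath}
where $\mathcal{G}''$ is the restriction of $\mathcal{G}$ to $V\Gamma\setminus Z$. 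First I would record that $\Gamma''$ has no central vertex: if $w \in V\Gamma''$ were adjacent to every other vertex of $\Gamma''$, then, being also adjacent to every vertex of $Z$, it would be central in $\Gamma$, contradicting $w \notin Z$. As $\Gamma''$ is finite and has no central vertex, it satisfies the hypothesis of Theorem \ref{no central vertex implies property A}, so $\Gamma''\mathcal{G}''$ has Grossman's property (A); when $V\Gamma'' = \emptyset$ this factor is trivial and the conclusion is immediate.

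The heart of the argument is a lemma describing pointwise inner automorphisms of a direct product: for arbitrary groups $A$ and $B$ one has $\Autpi(A\times B) = \Autpi(A)\times\Autpi(B)$. To see this, let $\phi \in \Autpi(A\times B)$. The conjugacy class of $(a,1)$ equals $\{(a',1): a' \sim_A a\}$, so $\phi(a,1)$ must lie in $A\times\{1\}$; hence $\phi(A\times\{1\})\subseteq A\times\{1\}$, and applying the same reasoning to the (also pointwise inner) automorphism $\phi^{-1}$ gives $\phi(A\times\{1\}) = A\times\{1\}$. Likewise $\phi$ preserves $\{1\}\times B$, so $\phi = \phi|_A \times \phi|_B$ with $\phi|_A \in \Autpi(A)$ and $\phi|_B\in\Autpi(B)$; the reverse inclusion is clear since a product of pointwise inner automorphisms is pointwise inner. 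As $\Inn(A\times B) = \Inn(A)\times\Inn(B)$ and $\Inn(X)\le\Autpi(X)$ always holds, comparing the two product decompositions shows that $A\times B$ has property (A) if and only if both $A$ and $B$ do, and an obvious induction extends this to any finite direct product.

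Finally I would assemble the pieces. Writing $K = \prod_{v\in Z}G_v$ and $H = \Gamma''\mathcal{G}''$, the lemma gives that $G = H \times K$ has property (A) if and only if both $H$ and $K$ do. Since $H$ has property (A) by the first paragraph, $G$ has property (A) if and only if $K$ does, and by the inductive form of the lemma this occurs if and only if each $G_v$ with $v \in Z$ has property (A). As $\{G_v : v\in Z\}$ is exactly the family of vertex groups at central vertices, this is the desired equivalence, the case $Z = \emptyset$ being the vacuous instance already covered by Theorem \ref{no central vertex implies property A}. I expect the only genuinely delicate point to be the direct-product lemma; once one observes that a pointwise inner automorphism cannot move an element off the direct factor supporting it, the remainder is bookkeeping, and the structural reductions (realising $\Gamma$ as a join and checking that $\Gamma''$ is free of central vertices) are routine facts about graph products.
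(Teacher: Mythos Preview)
Your argument is correct and follows the same route as the paper: split off the central vertices to write $G$ as a direct product of the non-central graph product (which has property~(A) by Theorem~\ref{no central vertex implies property A}) with the central vertex groups, and then reduce via the fact that a finite direct product has property~(A) if and only if each factor does. The only difference is that the paper records this last fact as Lemma~\ref{property A in direct products} and leaves it to the reader, whereas you supply the proof explicitly; your proof of that lemma is sound.
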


In Section \ref{CD pairs in GP} we study separability of conjugacy classes in graph products of residually finite groups and using Theorem \ref{no central vertex implies property A} we prove the following.
\begin{theorem}
	\label{corollary O}
	Let $\Gamma$ be a finite graph without central vertices and let $\mathcal{G} = \{G_v \mid v \in V\Gamma\}$ be a family of non-trivial finitely generated RF groups. Then the group $\Gamma \mathcal{G}$ is IAS and, consequently, $\Out(\Gamma \mathcal{G})$ is RF.
\end{theorem}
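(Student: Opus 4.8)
The plan is to verify the defining condition of IAS directly, reducing first to the irreducible case via closure under direct products and then extracting a suitable finite quotient from Grossman's property (A) (Theorem \ref{no central vertex implies property A}) together with a separation of conjugacy classes.

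\emph{Reduction to irreducible factors.} Since $\Gamma$ is finite, it decomposes as a join of irreducible induced subgraphs $\Gamma_1,\dots,\Gamma_k$: we may write $V\Gamma = V\Gamma_1 \sqcup \cdots \sqcup V\Gamma_k$ so that every pair $\{u,w\}$ with $u \in V\Gamma_i$, $w \in V\Gamma_j$, $i \neq j$, is an edge of $\Gamma$. Then elements of distinct factors commute, and $\Gamma\mathcal{G} \cong \Gamma_1\mathcal{G}_1 \times \cdots \times \Gamma_k\mathcal{G}_k$, where $\mathcal{G}_i$ is the restriction of $\mathcal{G}$ to $V\Gamma_i$. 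A vertex is central in $\Gamma$ exactly when it is the unique vertex of some factor $\Gamma_i$; hence the hypothesis that $\Gamma$ has no central vertex forces every $\Gamma_i$ to have at least two vertices, and irreducibility then rules out central vertices inside each $\Gamma_i$ as well. By closure of the class of IAS groups under direct products (Corollary \ref{ias direct}), it suffices to prove that each $\Gamma_i\mathcal{G}_i$ is IAS. We may therefore assume $\Gamma$ is irreducible with $|V\Gamma| \geq 2$; in particular $\Gamma$ has a finite coneless subset, so $G := \Gamma\mathcal{G}$ has Grossman's property (A) by Theorem \ref{no central vertex implies property A}.

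\emph{From a non-inner automorphism to a non-conjugate pair.} Let $\phi \in \Aut(G) \setminus \Inn(G)$. The naive route — showing that $G$ is Grossmanian and quoting Corollary \ref{grossmanian} — is unavailable, since the vertex groups are only assumed RF and so $G$ need not be conjugacy separable. Instead I would verify IAS by hand. By property (A), $\phi$ is not pointwise inner, so there is $g_0 \in G$ with $\phi(g_0)$ not conjugate to $g_0$. It would then suffice to find a $\phi$-invariant finite-index normal subgroup $N \trianglelefteq G$ whose quotient map still separates the conjugacy classes of $g_0$ and $\phi(g_0)$: the induced automorphism $\bar\phi$ of $G/N$ is well defined, and since it sends the class of $\overline{g_0}$ to the distinct class of $\overline{\phi(g_0)}$ it cannot be inner, so $[\phi]$ has non-trivial image in $\Out(G/N)$.

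\emph{Separating the pair — the main obstacle.} The crux, and the step I expect to be the genuine difficulty, is to separate the conjugacy classes of $g_0$ and $\phi(g_0)$ in some finite quotient of $G$ using only that the vertex groups are RF. This is the content of the conjugacy-separation analysis of Section \ref{CD pairs in GP}: one must show that the relevant non-conjugate pairs are conjugacy distinguished in $G$, either by refining the property-(A) argument so that the witness $g_0$ may be taken of controlled support (so that its conjugacy class is governed by the RF vertex groups alone), or by checking directly that pairs of the form $(g_0,\phi(g_0))$ lie among the conjugacy-distinguished pairs handled there. Granting such a finite quotient $q \colon G \to Q$ with $q(g_0)$ not conjugate to $q(\phi(g_0))$, one would then replace $\ker q$ by the characteristic subgroup $N = \bigcap\{H \leq G : [G:H] \leq [G:\ker q]\}$, which is of finite index because $G$ is finitely generated and is $\phi$-invariant because it is characteristic; the surjection $G/N \twoheadrightarrow Q$ shows the two classes stay distinct in $G/N$, and the preceding paragraph applies. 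Thus each irreducible factor, and hence $\Gamma\mathcal{G}$ itself, would be IAS, and $\Out(\Gamma\mathcal{G})$ would be RF. The whole weight of the proof rests on the separation step, where the absence of conjugacy separability for RF vertex groups must be circumvented by exploiting the special form of the pairs produced by property (A).
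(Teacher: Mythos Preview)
Your outline is essentially right in spirit, and you have correctly located the genuine difficulty, but there are two points worth making explicit.

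\textbf{The irreducible decomposition is unnecessary.} The paper proves Theorem~\ref{corollary O} directly for any finite graph without central vertices, with no reduction to irreducible factors. The direct-product closure (Corollary~\ref{ias direct}) is only invoked later, for Corollary~\ref{theorem O}, to handle graphs that \emph{do} have central vertices. Your reduction is valid but buys nothing here, since the arguments of Sections~\ref{pointwise inner automorphisms} and~\ref{CD pairs in GP} already apply to the whole graph.

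\textbf{The two-step strategy (property~(A), then separate) does not work as stated; the paper integrates the two.} You first invoke Theorem~\ref{no central vertex implies property A} to produce \emph{some} $g_0$ with $\phi(g_0)\not\sim g_0$, and then hope to separate this particular pair. But property~(A) gives no control over $g_0$, and for a general $g_0$ the pair $(\phi(g_0),g_0)$ need not be conjugacy-distinguishable when the vertex groups are merely RF. The paper avoids this by running the argument in the other direction: it never appeals to property~(A) as a black box. Instead it uses the sharper Lemma~\ref{almost pointwise inner is inner}, which only needs $\phi(g)\sim g$ for $|g|\le 2$. The crucial observation (Lemma~\ref{CD pairs}) is that precisely these short elements --- and more generally cyclically reduced $g$ with $\S(g)=\emptyset$ --- have $\C$-separable conjugacy classes whenever the vertex groups are residually-$\C$. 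So the contrapositive runs: if \emph{no} pair $(\phi(g),g)$ is $\C$-CD, then in particular $\phi(g)\sim g$ for all $g$ with $|g|\le 2$ (after the auxiliary Lemmas~\ref{almost pointwise inner} and Corollary~\ref{direct product}), whence $\phi\in\Inn(G)$ by Lemma~\ref{almost pointwise inner is inner}. This is exactly your suggested ``refining the property-(A) argument so that the witness $g_0$ may be taken of controlled support'', but it is the heart of the proof rather than a technical afterthought, and it requires the length-$\le 2$ version of property~(A), not property~(A) itself. Once this is established, Lemma~\ref{my Grossman} gives IAS directly; your characteristic-core construction is the same idea.
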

Note that this theorem generalises result of Minasyan and Osin in \cite{osin} on residual finiteness of outer automorphism groups of free products of finitely generated RF groups.

Combining Theorem \ref{corollary O} with Corollary \ref{ias direct} we obtain the following.
\begin{corollary}
	\label{theorem O}
	Let $\Gamma$ be a finite graph and let $\mathcal{G} = \{G_v \mid v \in V\Gamma\}$ be family of non-trivial finitely generated RF groups. Assume that $G_v$ is IAS whenever $v$ is central in $\Gamma$. Then the group $\Gamma\mathcal{G}$ is IAS and, consequently, $\Out(\Gamma\mathcal{G})$ is RF.
\end{corollary}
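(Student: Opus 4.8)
The plan is to reduce the general case to the already-established Theorem \ref{corollary O} by splitting off the central vertices as a direct factor. Let $Z \subseteq V\Gamma$ denote the set of central vertices of $\Gamma$, let $\Gamma'$ be the full subgraph of $\Gamma$ spanned by $V\Gamma \setminus Z$, and set $\mathcal{G}' = \{G_v \mid v \in V\Gamma \setminus Z\}$. First I would establish the direct product decomposition
\begin{displaymath}
	\Gamma\mathcal{G} \cong \Big(\prod_{v \in Z} G_v\Big) \times \Gamma'\mathcal{G}'.
\end{displaymath}
This is immediate from the defining relations: since each $v \in Z$ is adjacent to every other vertex, the subgroup generated by $\{G_v \mid v \in Z\}$ commutes with the subgroup generated by the remaining vertex groups; moreover distinct central vertices are mutually adjacent, so they span a clique and $\langle G_v \mid v \in Z\rangle = \prod_{v \in Z}G_v$. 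By the standard fact that the subgroup of a graph product generated by the vertex groups of a full subgraph is canonically the graph product on that subgraph, the complementary factor is exactly $\Gamma'\mathcal{G}'$.

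Next I would check that $\Gamma'$ has no central vertices, which is the one point that genuinely needs an argument. Take any $u \in V\Gamma \setminus Z$. Since $u$ is not central in $\Gamma$, there is a vertex $w \neq u$ with $\{u,w\} \notin E\Gamma$. As every central vertex is adjacent to $u$, the vertex $w$ cannot be central, so $w \in V\Gamma \setminus Z = V\Gamma'$. Hence $u$ has a non-neighbour already inside $\Gamma'$, so $u$ is not central in $\Gamma'$. Thus $\Gamma'$ is a finite graph without central vertices, all of whose vertex groups are finitely generated and RF.

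Finally I would assemble the conclusion. By the previous paragraph Theorem \ref{corollary O} applies to $\Gamma'\mathcal{G}'$ and shows it is IAS, while by hypothesis each $G_v$ with $v \in Z$ is IAS. Applying Corollary \ref{ias direct} (closure of the IAS property under direct products, extended to finitely many factors by induction) to the decomposition above yields that $\Gamma\mathcal{G}$ is IAS, and consequently $\Out(\Gamma\mathcal{G})$ is RF. The degenerate cases are covered by the same argument: when $Z = \emptyset$ the statement is precisely Theorem \ref{corollary O}, and when $V\Gamma = Z$ the group $\Gamma\mathcal{G}$ is the direct product $\prod_{v \in Z}G_v$ of IAS groups. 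I do not expect a serious obstacle here, since the substantive work is already carried by Theorem \ref{corollary O} and Corollary \ref{ias direct}; the only real verification is that stripping off the central vertices leaves a graph with no central vertices, which is the short argument given above.
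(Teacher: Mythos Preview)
Your proof is correct and follows exactly the route the paper takes: the paper proves the general $\C$-version (Corollary \ref{CIS graph products}) by stripping off the set $C$ of central vertices, observing that $\Gamma_{V\Gamma\setminus C}$ has no central vertices so Proposition \ref{no central vertices implies CIS} applies to that factor, and then invoking Proposition \ref{cias-direct} on the direct product decomposition $G = G_{V\Gamma\setminus C}\times\prod_{v\in C}G_v$; Corollary \ref{theorem O} is then obtained by specialising to $\C=$ all finite groups. The one hypothesis you leave implicit is that $\Gamma'\mathcal{G}'$ is residually finite (needed for Corollary \ref{ias direct}); the paper records this explicitly via \cite[Lemma 6.6]{mf}, so you should add a sentence to that effect.
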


Next, combining Corollary \ref{theorem O} with Lemma \ref{polycyclic groups are IS} (virtually polycyclic groups are IAS) we get the following.
\begin{corollary}
	Let $\Gamma$ be a finite graph and let $\mathcal{G} = \{G_v \mid v \in V\Gamma\}$ be a family of non-trivial virtually polycyclic groups. Then the group $\Gamma \mathcal{G}$ is IAS and $\Out(\Gamma\mathcal{G})$ is RF.
\end{corollary}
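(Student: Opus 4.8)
The plan is simply to verify that the hypotheses of Corollary \ref{theorem O} are satisfied for a family of virtually polycyclic vertex groups, after which the conclusion follows at once. Recall that those hypotheses require each vertex group to be (i) finitely generated, (ii) residually finite, and (iii) IAS whenever the corresponding vertex is central in $\Gamma$. So the whole argument reduces to checking (i)--(iii) for virtually polycyclic groups and then quoting Corollary \ref{theorem O} verbatim.

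For (i) and (ii) I would invoke the classical structure theory. Every polycyclic group admits a subnormal series with cyclic factors and is therefore finitely generated; a virtually polycyclic group contains such a subgroup with finite index, and so is itself finitely generated. Residual finiteness of polycyclic groups is a classical theorem of Hirsch, and residual finiteness passes from a finite-index subgroup to the ambient group: given $g \neq 1$, one separates it from $1$ in a finite quotient of the polycyclic finite-index subgroup $H$ and then passes to the normal core of the separating subgroup, using that the finitely generated group $\Gamma\mathcal{G}$'s vertex group has only finitely many subgroups of each finite index. Hence every virtually polycyclic group is finitely generated and RF.

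For (iii) I would appeal directly to Lemma \ref{polycyclic groups are IS}, which asserts that every virtually polycyclic group is IAS; in particular the vertex groups sitting at the central vertices of $\Gamma$ are IAS, so condition (iii) holds automatically (indeed it holds at \emph{every} vertex, not just the central ones).

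With (i)--(iii) in hand, Corollary \ref{theorem O} applies and gives that $\Gamma\mathcal{G}$ is IAS, whence $\Out(\Gamma\mathcal{G})$ is RF. Since each step is either a direct application of an earlier result in the paper or a standard fact from the theory of polycyclic groups, I do not expect any genuine obstacle here; the only point deserving a word of care is the inheritance of residual finiteness by a finitely generated group from a finite-index subgroup, which is routine.
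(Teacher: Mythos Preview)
Your proposal is correct and follows exactly the approach the paper indicates: the corollary is stated immediately after the sentence ``combining Corollary \ref{theorem O} with Lemma \ref{polycyclic groups are IS} (virtually polycyclic groups are IAS) we get the following,'' with no further proof given. You have simply made explicit the verification that virtually polycyclic groups are finitely generated and residually finite, which the paper leaves implicit as standard.
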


Suppose that $p \in \mathbb{N}$ is a prime number. In Section \ref{section-p} we prove a $p$-analogue to Theorem \ref{corollary O}.
\begin{theorem}
	\label{virtually residually p}
	Let $\Gamma$ be a finite graph without central vertices and let $\mathcal{G} = \{G_v \mid v \in V\Gamma\}$ be a family of non-trivial finitely generated residually $p$-finite groups. Then the group $\Gamma \mathcal{G}$ is $p$-IAS, $\Out_p(\Gamma \mathcal{G})$ is residually $p$-finite and $\Out(\Gamma \mathcal{G})$ is virtually residually $p$-finite.
\end{theorem}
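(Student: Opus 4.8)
The plan is to follow the template of Theorem \ref{corollary O}, replacing the profinite topology by the pro-$p$ topology throughout, and to isolate a finite-index subgroup $\Out_p(\Gamma\mathcal{G})$ on which the argument yields genuine residual $p$-finiteness. Write $G = \Gamma\mathcal{G}$. Since $\Gamma$ is finite and each vertex group is finitely generated, $G$ is finitely generated; since $\Gamma$ has no central vertices, Corollary \ref{theorem A} (equivalently Theorem \ref{no central vertex implies property A}) shows $G$ has Grossman's property (A), i.e. $\Autpi(G) = \Inn(G)$. This input is purely algebraic and topology-free, so it can be quoted verbatim: every non-inner $\phi \in \Aut(G)$ is non-pointwise-inner, hence there is $g \in G$ with $\phi(g)$ not conjugate to $g$.

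First I would set up the linear-algebraic bookkeeping producing $\Out_p(G)$. The group $\Out(G)$ acts on $H := H_1(G;\mathbb{F}_p) = G/[G,G]G^p$, and $\Inn(G)$ acts trivially, so there is a representation $\rho\colon \Out(G) \to \mathrm{GL}(H)$. As $G$ is finitely generated, $H$ is a finite $\mathbb{F}_p$-vector space and $\mathrm{GL}(H)$ is finite; I set $\Out_p(G) := \ker\rho$, a finite-index subgroup of $\Out(G)$. Thus ``$\Out(G)$ virtually residually $p$-finite'' reduces to ``$\Out_p(G)$ residually $p$-finite'', and it remains to prove the latter together with $p$-IAS.

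The heart of the matter is the $p$-analogue of the separation results of Section \ref{CD pairs in GP}: I would show that $G$ is residually $p$-finite and that the conjugacy classes relevant to property (A) are separable in the pro-$p$ topology, so that every non-inner $\phi$ is witnessed in a finite $p$-quotient. Concretely, given the $g$ above with $\phi(g)\not\sim g$, the new content is that this non-conjugacy persists in a finite $p$-group quotient $G/N_0$ with $N_0$ normal, $\phi$-invariant, of $p$-power index. Passing to the characteristic core $N = \bigcap_{\psi\in\Aut(G)}\psi(N_0)$ keeps $p$-power index (as $G$ is finitely generated only finitely many subgroups of index $[G:N_0]$ occur, and a finite intersection of $p$-power-index normal subgroups again has $p$-power index), and $G/N$ still separates $g$ from $\phi(g)$; hence $\bar\phi$ is a non-trivial outer automorphism of the finite $p$-group $G/N$. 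This establishes $p$-IAS.

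Finally I would assemble residual $p$-finiteness of $\Out_p(G)$. For each characteristic $N$ of $p$-power index we have $\theta_N\colon \Out(G)\to\Out(G/N)$. An element of $\Out_p(G)$ acts trivially on $H$, hence on the Frattini quotient $(G/N)/\Phi(G/N)$ (a quotient of $H$); the automorphisms of the finite $p$-group $G/N$ trivial on $(G/N)/\Phi(G/N)$ form a normal $p$-subgroup of $\Aut(G/N)$, so $\theta_N(\Out_p(G))$ lies in a finite $p$-subgroup of $\Out(G/N)$. By $p$-IAS, $\bigcap_N\ker\theta_N$ is trivial on $\Out_p(G)$, whence $\Out_p(G)$ embeds into a product of finite $p$-groups and is residually $p$-finite; its finite index in $\Out(G)$ then gives that $\Out(G)$ is virtually residually $p$-finite. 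The main obstacle is the separation step of the third paragraph: unlike the profinite topology, the pro-$p$ topology on a graph product is not automatically well-behaved, so one must verify that $G$ is residually $p$-finite and, crucially, that the double-coset and parabolic separability arguments underlying Section \ref{CD pairs in GP} survive when only finite $p$-quotients are permitted and the vertex groups are merely residually $p$-finite rather than conjugacy $p$-separable, so that exactly the special pairs $(g,\phi(g))$ produced by property (A) can be $p$-separated.
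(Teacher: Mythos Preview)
Your overall architecture matches the paper's: establish that $G=\Gamma\mathcal G$ is $p$-IAS via the machinery of Section~\ref{CD pairs in GP} specialised to $\C=\{\text{finite $p$-groups}\}$, and then deduce residual $p$-finiteness of $\Out_p(G)$ from Hall's lemma exactly as you do in your final paragraph (this is the paper's Lemma~\ref{p-CIAS virtually residually p}). So the high-level plan is correct.

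There is, however, a genuine gap in the logic of your third paragraph. You first invoke Grossman's property~(A) to obtain \emph{some} $g$ with $\phi(g)\not\sim_G g$, and then assert that \emph{this} non-conjugacy can be pushed to a finite $p$-quotient. But $G$ is not known to be $p$-conjugacy separable under your hypotheses (the vertex groups are only residually $p$-finite, not $p$-CS), so an arbitrary non-conjugate pair need not be $p$-CD. You cannot choose $g$ first and separate afterwards; the order must be reversed.

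The paper handles this by proving a sharper statement than property~(A): for every $\phi\in\Aut(G)\setminus\Inn(G)$ there exists $g$ such that $\phi(g)\not\sim_G g$ \emph{and} the pair $(\phi(g),g)$ is $\C$-CD. This is done by first identifying structurally special pairs that are always $\C$-CD in a graph product of residually-$\C$ groups (Lemma~\ref{CD pairs}: cyclically reduced pairs with distinct support or length, and pairs $(f,g)$ with $\S(f)=\emptyset$), and then showing, via the analysis of Section~\ref{pointwise inner automorphisms} (Lemmas~\ref{almost pointwise inner is inner}, \ref{almost pointwise inner}, Corollary~\ref{direct product}), that if $\phi$ is not detected by any of these special pairs then $\phi$ must already be inner. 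This is precisely the hypothesis of the generalised Grossman criterion (Lemma~\ref{my Grossman}), which then yields $p$-IAS. Your closing paragraph shows you sense this obstacle, but the phrase ``the special pairs produced by property~(A)'' is misleading: property~(A) does not produce specific pairs, and the separable pairs are singled out by their combinatorics in the graph product, not by $\phi$.
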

See Section \ref{section-p} for definitions of residually $p$-finite groups, $p$-IAS groups and $\Out_p$. As a matter of fact, we show that the class of finitely generated residually $p$-finite $p$-IAS groups is closed under direct products (See Lemma \ref{p-IAS direct}) and using that we prove a $p$-analogue to Corollary \ref{theorem O}.
\begin{corollary}
	\label{p-corollary}
	Let $\Gamma$ be a finite graph and let $\mathcal{G} = \{G_v \mid v\in V\Gamma\}$ be a family of non-trivial finitely generated residually $p$-finite groups. Assume that $G_v$ is $p$-IAS whenever $v$ is central in $\Gamma$. Then the group $\Gamma\mathcal{G}$ is $p$-IAS and, consequently, $\Out_p(\Gamma\mathcal{G})$ is residually $p$-finite and $\Out(\Gamma\mathcal{G})$ is virtually residually $p$-finite. 
\end{corollary}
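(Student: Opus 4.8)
The plan is to follow the same decomposition used to prove Corollary \ref{theorem O}, but replacing every appeal to residual finiteness with its $p$-analogue. First I would write $\Gamma$ as the join of its central part and its non-central part: let $Z \subseteq V\Gamma$ be the set of central vertices and let $\Gamma'$ be the full subgraph of $\Gamma$ spanned by $V\Gamma \setminus Z$. Since every vertex of $Z$ is adjacent to every other vertex, we obtain a direct product decomposition
\begin{equation*}
	\Gamma\mathcal{G} \cong \bigl(\Gamma'\mathcal{G}'\bigr) \times \Bigl(\prod_{v \in Z} G_v\Bigr),
\end{equation*}
where $\Gamma'\mathcal{G}'$ is the graph product over the central-vertex-free graph $\Gamma'$. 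The graph $\Gamma'$ has no central vertices, so Theorem \ref{virtually residually p} applies to it directly and shows that $\Gamma'\mathcal{G}'$ is $p$-IAS. Each factor $G_v$ with $v \in Z$ is assumed to be $p$-IAS by hypothesis, and a finite direct product of finitely generated residually $p$-finite groups is again finitely generated residually $p$-finite.

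The key remaining step is closure of the class of finitely generated residually $p$-finite $p$-IAS groups under finite direct products. This is exactly Lemma \ref{p-IAS direct}, which I would invoke here. Combining it with the previous paragraph, the whole product $\Gamma\mathcal{G}$ is a finite direct product of finitely generated residually $p$-finite $p$-IAS groups, hence is itself $p$-IAS. The passage from $p$-IAS to the conclusions about $\Out$ is then formal: being $p$-IAS means every non-trivial outer automorphism survives in a finite $p$-quotient as a non-trivial outer automorphism, so $\Out_p(\Gamma\mathcal{G})$ is residually $p$-finite, and since residually $p$-finite implies virtually residually $p$-finite (indeed residually $p$-finite groups are residually finite with a $p$-structure), one reads off that $\Out(\Gamma\mathcal{G})$ is virtually residually $p$-finite, using the definitions recorded in Section \ref{section-p}.

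The only subtlety I would be careful about is verifying that the direct product decomposition above is honest, i.e. that the central vertices genuinely split off as a direct factor. This follows from the defining relations of a graph product: a central vertex $v$ commutes with every generator of the remaining vertex groups, so the subgroup $\prod_{v \in Z} G_v$ is central-free from $\Gamma'\mathcal{G}'$ and the two subgroups generate $\Gamma\mathcal{G}$ with trivial intersection. One should also note that if $Z = V\Gamma$ (the complete graph case) the factor $\Gamma'\mathcal{G}'$ is trivial and the statement reduces purely to Lemma \ref{p-IAS direct}, while if $Z = \emptyset$ it reduces to Theorem \ref{virtually residually p}; both degenerate cases are covered.

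I expect the main obstacle to be purely bookkeeping rather than mathematical: ensuring that the finite generation and residual $p$-finiteness hypotheses are propagated correctly through the direct product so that Lemma \ref{p-IAS direct} applies verbatim, and checking that the definition of $p$-IAS is compatible with the direct product splitting when one factor (the central part) may itself fail to have a central-vertex-free structure. Since Lemma \ref{p-IAS direct} is designed precisely to handle this, the argument should go through without further technical work.
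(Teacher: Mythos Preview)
Your proposal is correct and follows essentially the same route as the paper: split off the central vertices to write $\Gamma\mathcal{G}$ as $G_{V\Gamma\setminus Z}\times\prod_{v\in Z}G_v$, apply Theorem \ref{virtually residually p} to the central-vertex-free factor, and then invoke Lemma \ref{p-IAS direct} to handle the direct product. The only slight wobble is your aside that ``residually $p$-finite implies virtually residually $p$-finite'' as the reason $\Out(\Gamma\mathcal{G})$ is virtually residually $p$-finite; the real reason, recorded in Lemma \ref{p-CIAS virtually residually p}, is that $\Out_p$ has finite index in $\Out$ for finitely generated groups --- but since you defer to Section \ref{section-p} anyway this is cosmetic.
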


Let $G$ be a group. The \emph{Torelli group} of $G$, $\Tor(G) \leq \Out(G)$, consists of all outer automorphisms of $G$ that act trivially on the abelianisation of $G$; see Section \ref{section-p} for formal definition of $\Tor(G)$. Finally, we use Theorem \ref{virtually residually p} to establish bi-orderability for Torelli groups of certain graph products of residually torsion-free nilpotent groups.
\begin{theorem}
\label{torreli}
	Let $\Gamma$ be a finite graph without central vertices $\Gamma$ and let $\mathcal{G} = \{G_v \mid v \in V\Gamma\}$ be a family of non-trivial residually torsion-free nilpotent groups. Then $\Tor(G)$ is residually $p$-finite for every prime number $p$ and is bi-orderable.
\end{theorem}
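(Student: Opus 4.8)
The plan is to derive the residually-$p$ statement from Theorem \ref{virtually residually p} and then upgrade to bi-orderability through residual torsion-free nilpotence of $\Tor(G)$. First I would record that a residually torsion-free nilpotent group is residually-$p$ for every prime $p$: given $1 \neq g$, map onto a torsion-free nilpotent group in which $g$ survives and use that finitely generated torsion-free nilpotent groups are residually finite $p$-groups for every $p$. Thus each vertex group satisfies the hypotheses of Theorem \ref{virtually residually p} for every $p$, so $\Out_p(G)$ is residually-$p$ for every $p$. Since an element of $\Tor(G)$ acts trivially on $G^{ab}=H_1(G;\mathbb{Z})$, it acts trivially on $H_1(G;\mathbb{F}_p)=G^{ab}\otimes\mathbb{F}_p$, whence $\Tor(G)\le\Out_p(G)$; as subgroups of residually-$p$ groups are residually-$p$, the group $\Tor(G)$ is residually-$p$ for every prime $p$, which is the first assertion.

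For bi-orderability I would show that $\Tor(G)$ is in fact residually torsion-free nilpotent and invoke the classical fact that a group carrying a central series with torsion-free quotients and trivial intersection is bi-orderable. Graph products of residually torsion-free nilpotent groups are again residually torsion-free nilpotent, so $G=\Gamma\mathcal{G}$ admits such a series on itself, namely its (rational) lower central series $\{\gamma_n(G)\}$ with $\bigcap_n\gamma_n(G)=1$ and torsion-free quotients $\gamma_n(G)/\gamma_{n+1}(G)$. I would then filter $\Tor(G)$ by the Andreadakis subgroups $\Tor(G)=A_1\ge A_2\ge\cdots$, where $A_n$ is the image in $\Out(G)$ of those $\phi\in\Aut(G)$ with $\phi(g)g^{-1}\in\gamma_{n+1}(G)$ for all $g$. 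This filtration is central, and the associated Johnson homomorphisms embed each factor $A_n/A_{n+1}$ into $\Hom\bigl(G^{ab},\gamma_{n+1}(G)/\gamma_{n+2}(G)\bigr)$, which is torsion-free because its target is. Granting that $\bigcap_n A_n$ is trivial in $\Out(G)$, this is exactly the required central series, so $\Tor(G)$ is residually torsion-free nilpotent and hence bi-orderable.

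The hard part is precisely the triviality of $\bigcap_n A_n$ in $\Out(G)$, equivalently injectivity of the total Johnson homomorphism. At the level of $\Aut(G)$ the analogous intersection is immediately trivial, since $\phi(g)g^{-1}\in\gamma_n(G)$ for all $n$ forces $\phi(g)g^{-1}\in\bigcap_n\gamma_n(G)=1$; but an outer class in $\bigcap_n A_n$ agrees with inner automorphisms only modulo each $\gamma_{n+1}(G)$, and these inner automorphisms $c_n$ may vary with $n$, assembling into conjugation by an element of the pronilpotent completion $\widehat{G}=\varprojlim_n G/\gamma_n(G)$ that normalises $G$. I expect to remove this ambiguity using the no-central-vertex hypothesis, which forces $Z(G)=1$ and, for graph products, a malnormality of $G$ inside $\widehat{G}$ strong enough to guarantee that every element of $\widehat{G}$ normalising $G$ already induces an inner automorphism of $G$; this yields $\bigcap_n A_n=1$ and completes the argument.
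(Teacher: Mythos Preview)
Your first paragraph is correct and matches the paper's argument exactly: residually torsion-free nilpotent groups are residually $p$-finite for every prime $p$ (Gruenberg), so Theorem \ref{virtually residually p} gives that $\Out_p(G)$ is residually $p$-finite, and since $\Tor(G)\le\Out_p(G)$ for every $p$, the subgroup $\Tor(G)$ inherits residual $p$-finiteness for every prime $p$.

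The problem is entirely in your second paragraph. You attempt to prove bi-orderability by showing $\Tor(G)$ is residually torsion-free nilpotent via the Andreadakis--Johnson filtration, and you correctly identify that the triviality of $\bigcap_n A_n$ in $\Out(G)$ is the crux. But your proposed resolution---some malnormality of $G$ in its pronilpotent completion $\widehat{G}$ forced by the absence of central vertices---is speculative and not at all clear; there is no established result of this shape for graph products, and making it precise would be a substantial undertaking in its own right. As written this is a genuine gap.

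More importantly, this entire detour is unnecessary. You have already established that $\Tor(G)$ is residually $p$-finite for every prime $p$, and Rhemtulla's theorem \cite{rhemtulla} says precisely that a group which is residually a finite $p$-group for infinitely many primes $p$ is bi-orderable. This is exactly how the paper concludes: one sentence citing \cite{rhemtulla} after the residual-$p$ statement. So bi-orderability follows immediately from what you proved in your first paragraph, with no need for the Johnson filtration, the pronilpotent completion, or any analysis of $\bigcap_n A_n$.
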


%%%%%%%%%%%%%%%%%%%%%%%%%%%%%%%%%%%%%%%%%%%%%%%%%%%%%%%%%%%%%%%%%%%%%%%%%%%%%%%%%%%%%%%%%%%%%%%%%%%%%%%%%%%%%%%%%%%%
%%%%%%%%%%%%%%%%%%%%%%%%%%%%%%%%%%%%%%%%%%%%%%%%%%%%%%%%%%%%%%%%%%%%%%%%%%%%%%%%%%%%%%%%%%%%%%%%%%%%%%%%%%%%%%%%%%%%
\section{Direct products of $\C$-IAS groups and Baumslag's method}\label{cias}
%%%%%%%%%%%%%%%%%%%%%%%%%%%%%%%%%%%%%%%%%%%%%%%%%%%%%%%%%%%%%%%%%%%%%%%%%%%%%%%%%%%%%%%%%%%%%%%%%%%%%%%%%%%%%%%%%%%%
Let $G$ be a group and suppose that $H \leq G$; we will use $H \fileq G$ to denote that $|G:H| < \infty$. Similarly, we will use $N \normfileq G$ to denote that $N \normleq G$ and $|G:N|<\infty$.

\subsection{Pro-$\C$ topologies on groups}
Let $G$ be a group and let $\C$ be a class of finite groups. If $F \in \C$ then we say that $F$ is a \emph{$\C$-group}. We say that $N\normleq G$ is a \emph{co-$\C$} subgroup of $G$ if $G/N \in \mathcal{C}$ and we say that $G/N$ is a \emph{$\C$-quotient} of $G$. We will use $\NC(G) = \{N \normleq G \mid G/N \in \C\}$ to denote the set of co-$\C$ subgroups of $G$. In this paper we will always assume that the class $\C$ satisfies the following closure properties:
\begin{itemize}
	\item[(c1)] subgroups: let $G \in \C$ and $H \leq G$; then $H \in \C$, 
	\item[(c2)] finite direct products: let $G_1, G_2 \in \C$; then $G_1 \times G_2 \in \C$.
\end{itemize}
In this case one can easily check that for every group $G$ the system of subsets $\mathcal{B_C} = \{gN \mid g \in G, N \in \NC(G)\} \subseteq \mathcal{P}(G)$ forms a basis of open sets for a topology on $G$. This topology is called the \emph{pro-$\C$ topology} on $G$ and we will use pro-$\C(G)$ when referring to it. If $\C$ is the class of all finite groups then the corresponding group topology is called the \emph{profinite topology} on $G$ and is denoted $\mathcal{PT}(G)$. If $\C$ is the class of all finite $p$-groups, where $p$ is a prime number, then the corresponding group topology is referred to as \emph{pro-$p$} topology on $G$ and is denoted as pro-$p(G)$.

We say that a subset $X \subseteq G$ is $\C$-\emph{closed} or $\C$-\emph{separable} in $G$ if it is closed in pro-$\C(G)$; $\C$-open subsets of $G$ are defined analogically. One can show that if the class $\C$ satisfies (c1) and (c2) then, equipping a group with its pro-$\C$ topology, is actually a faithful functor from the category of groups to the category of topological groups, i.e. group homomorphisms are continuous with respect to corresponding pro-$\C$ topologies and group isomorphisms are homeomorphisms.

We say that a group $G$ is residually-$\C$ if for every $g \in G \setminus \{1\}$ there is $N \in \NC(G)$ such that $g \not\in N$. One can easily check that for a group $G$ the following are equivalent:
\begin{itemize}
	\item $G$ is residually-$\C$,
	\item $\{1\}$ is $\C$-closed in $G$,
	\item $\bigcap_{N \in \NC(G)}N = \{1\}$,
	\item pro-$\C(G)$ is Hausdorff.
\end{itemize}

\subsection{$\C$-IAS groups}

We say that a group $G$ is $\C$-\emph{inner automorphism separable} ($\C$-IAS) if for every $\phi \in \Aut(G)\setminus \Inn(G)$ there is $K \in \NC(G)$ characteristic in $G$ such that for the homomorphism $\tilde{\kappa} \colon \Aut(G) \to \Aut(G/K)$ given by
\begin{displaymath}
	\tilde{\kappa}(\psi)(gK) = \psi(g)K	
\end{displaymath}
for every $\psi \in \Aut(G)$ and $g \in G$
we have $\tilde{\kappa}(\phi) \not \in \Inn(G/K)$. In other words, group $G$ is $\C$-IAS if every outer automorphism of $G$ can be realised as non-trivial outer automorphism of some $\C$-quotient of $G$. If $\C$ is the class of all finite groups then we will say that $G$ is IAS. Similarly, if $\C$ is the class of all finite $p$-groups then we say that $G$ is $p$-IAS.

%{ \color{red}
%\todo{Added discussion explaining why Grossman's criterion is too strong}
As stated in the introduction, Grossman proved that if $G$ is a finitely generated CS group with Grossman's property (A) (i.e. G is Grossmanian) then $\Out(G)$ is RF. However, these are not necessary conditions. Finite groups are trivially CS and the group of outer automorphisms of a finite group is finite, thus residually finite, yet examples of finite groups that do not have Grossman's property (A) were given by Burnside in \cite{burnside} and later by Sah in \cite{sah}.

Another example of redundancy of these conditions are virtually polycyclic groups. It was shown by Formanek in \cite{formanek} and independently by Remeslennikov in cite \cite{remeslennikov} that virtually polycyclic groups are CS, in \cite{segal} Segal gave a construction of a torsion-free polycyclic group with non-inner pointwise inner automorphisms, i.e. without Grossman's property (A), yet by a result by Werhfritz \cite{wehrfritz}, if $G$ is a virtually polycyclic group then $\Out(G)$ embeds in $\mathop{GL}_n(\mathbb{Z})$ for some $n \in \mathbb{Z}$ and thus is RF by Mal'cev's theorem (see \cite{linear}).

The above examples show that Grossman's property (A) is in not a necessary condition. Similarly, the group $\mathop{SL}_3(\mathbb{Z}) \ast \mathbb{Z}$ is RF but not CS, has Grossman's property (A) and $\Out(\mathop{SL}_3(\mathbb{Z})\ast \mathbb{Z})$ is RF (see \cite[Theorem 1.6]{osin}), thus conjugacy separability is not necessary either.
%}

As stated in the introduction, it can be easily seen that if $G$ is IAS then $\Out(G)$ is RF. In Section \ref{section-p} we show that if $G$ is $p$-IAS then $\Out(G)$ is virtually residually $p$-finite (see Lemma \ref{p-CIAS virtually residually p}).
%\todo{referee suggested I explicitly state that $\Out(G)$ is RF if $G$ is IAS. This should do.}
  
\subsection{Direct products of $\C$-IAS groups}

%{\color{red}%\todo{Even though it seems trivial, I added this lemma so that I reference to it when I want to argue that I can WLOG assume that finite index subgroups of f.g. groups are characteristic, also it gives the idea what is being done in the following section}
The following simple lemma demonstrates the idea that every subgroup of finite index contains a characteristic subgroup of a finite index.
\begin{lemma}
	\label{characteristic}
	Suppose that $\C$ is a class of finite groups satisfying (c1) and (c2) and let $G$ be finitely generated group Then for every $K \in \NC(G)$ there is $L \in \NC(G)$ such that $L \leq K$ and $L$ is characteristic in $G$. 
\end{lemma}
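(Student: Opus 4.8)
The plan is to build $L$ as the intersection of all automorphic images of $K$ and then to use finite generation to show that this intersection is in fact finite. Concretely, I would set
\begin{displaymath}
	L = \bigcap_{\alpha \in \Aut(G)} \alpha(K).
\end{displaymath}
Since the identity is an automorphism, taking $\alpha = \id$ immediately gives $L \leq K$. Moreover $L$ is characteristic: for any $\beta \in \Aut(G)$, because $\beta$ is a bijection it commutes with intersections, so $\beta(L) = \bigcap_{\alpha} (\beta\alpha)(K)$, and as $\alpha$ ranges over $\Aut(G)$ the composite $\beta\alpha$ also ranges over all of $\Aut(G)$; hence $\beta(L) = L$. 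Thus the only remaining task is to verify that $L \in \NC(G)$.

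The crux, and the only place where finite generation enters, is showing that $G/L \in \C$. First I would observe that each $\alpha(K)$ is normal in $G = \alpha(G)$ and that $\alpha$ induces an isomorphism $G/K \cong G/\alpha(K)$, so $G/\alpha(K) \in \C$ and in particular every $\alpha(K)$ has the same finite index $m = |G:K|$. The key fact I would invoke is that a finitely generated group has only finitely many subgroups of any fixed finite index $m$: a subgroup of index $m$ determines a transitive action of $G$ on $m$ points, i.e. a homomorphism $G \to S_m$, and such a homomorphism is determined by the images of a finite generating set, of which there are at most $(m!)^{d}$ where $d$ bounds the number of generators. Consequently the set $\{\alpha(K) \mid \alpha \in \Aut(G)\}$ is finite, say equal to $\{K_1, \dots, K_r\}$ with $K_1 = K$, and the a priori infinite intersection defining $L$ collapses to the finite intersection $L = K_1 \cap \cdots \cap K_r$.

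Finally I would deduce $L \in \NC(G)$ from the closure hypotheses. The diagonal map $G/L \to \prod_{i=1}^{r} G/K_i$ is injective by the definition of $L$ as the intersection of the $K_i$; since each $G/K_i \cong G/K \in \C$, finite direct products (c2) give $\prod_{i=1}^{r} G/K_i \in \C$, and closure under subgroups (c1) then yields $G/L \in \C$. This exhibits $L$ as the required characteristic co-$\C$ subgroup contained in $K$. I expect the main obstacle to be precisely the finiteness of the intersection: once finite generation supplies this via the standard count of finite-index subgroups, characteristicity is formal and membership in $\NC(G)$ follows mechanically from (c1) and (c2).
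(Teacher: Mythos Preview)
Your proof is correct and follows essentially the same approach as the paper: define $L$ as the intersection of all automorphic images of $K$, use finite generation to reduce to a finite intersection via the standard count of finite-index subgroups, and conclude $L \in \NC(G)$. The only cosmetic differences are that the paper writes $\alpha^{-1}(K)$ where you write $\alpha(K)$ (equivalent since $\alpha$ ranges over the group $\Aut(G)$), and you spell out the diagonal embedding $G/L \hookrightarrow \prod G/K_i$ explicitly whereas the paper simply asserts that a finite intersection of co-$\C$ subgroups is co-$\C$.
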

\begin{proof}
	Set
	\begin{displaymath}
		L = \bigcap_{\alpha \in \Aut(G)}\alpha^{-1}(K).
	\end{displaymath}
	Clearly, $L \leq K$ and it can be easily seen that $L$ is characteristic in $G$. Note that for every $\varphi \in \Aut(G)$ we have $\alpha^{-1}(K) \in \NC(G)$ and $|G:\alpha^{-1}(K)| = |G:K|$. Note that as $G$ is finitely generated there are only finitely many $H \leq G$ such that $|G:N| = n$ for every $n \in \mathbb{N}$. We see that $L$ is an intersection of finitely many co-$\C$ subgroups of $G$ and thus $L$ itself is a co-$\C$ subgroup of $G$.
\end{proof}
%}

The main goal of this section is to adapt Grossman's method to prove the following.
\begin{proposition}
	\label{cias-direct}
	Let $\C$ be a class of finite groups satisfying (c1) and (c2). Let $A,B$ be finitely generated $\C$-IAS residually-$\C$ groups. Then the group $A\times B$ is $\C$-IAS.
\end{proposition}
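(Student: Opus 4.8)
The plan is to reduce everything to the two factors by recording the explicit component description of an automorphism of $A\times B$ and then separating the case where $\phi$ mixes the factors from the case where it does not. Writing, for $\phi\in\Aut(A\times B)$, $\phi(a,1)=(\alpha(a),\gamma(a))$ and $\phi(1,b)=(\beta(b),\delta(b))$ defines homomorphisms $\alpha\colon A\to A$, $\gamma\colon A\to B$, $\beta\colon B\to A$ and $\delta\colon B\to B$ (with commuting images, since $A\times 1$ and $1\times B$ commute elementwise). As conjugation acts coordinatewise, $\Inn(A\times B)=\Inn(A)\times\Inn(B)$, so $\phi$ is inner exactly when $\beta$ and $\gamma$ are trivial and $\alpha\in\Inn(A)$, $\delta\in\Inn(B)$. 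Consequently a non-inner $\phi$ satisfies at least one of: (i) a cross map, say $\gamma$, is non-trivial; or (ii) $\beta=\gamma=1$, so that $\phi=\alpha\times\delta$ with $\alpha\in\Aut(A)$ and $\delta\in\Aut(B)$, one of which, say $\alpha$, is non-inner.

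The technical engine is Lemma \ref{characteristic} together with the observation that innerness only propagates downwards to further quotients. Since $A\times B$ is finitely generated, inside any co-$\C$ subgroup $N_0$ I can find a characteristic co-$\C$ subgroup $K\le N_0$; as $K$ is characteristic, $\phi$ descends to $\bar\phi\in\Aut((A\times B)/K)$, and were $\bar\phi$ realised by conjugation by $gK$ we would have $\phi(x)\equiv gxg^{-1}\pmod{N_0}$ for all $x$. Hence it suffices to produce a co-$\C$ subgroup of product form $N_0=K_A\times K_B$, with $K_A\in\NC(A)$ and $K_B\in\NC(B)$, for which this congruence fails for every $g$; the associated $K$ then makes $\bar\phi$ non-inner, which is the $\C$-IAS conclusion for $\phi$.

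In case (i) I would choose $a_0\in A$ with $\gamma(a_0)\neq 1$ and, using that $B$ is residually-$\C$, pick $K_B\in\NC(B)$ with $\gamma(a_0)\notin K_B$, together with $K_A=A$. Because $A\times 1$ is normal, $g(a_0,1)g^{-1}\in A\times 1$ for every $g$; comparing the $B$-coordinates modulo $N_0=A\times K_B$ in the congruence $\phi(a_0,1)\equiv g(a_0,1)g^{-1}$ would force $\gamma(a_0)\in K_B$, a contradiction. In case (ii) I would apply the hypothesis that $A$ is $\C$-IAS to get a characteristic $K_A\in\NC(A)$ with the induced map $\bar\alpha$ non-inner in $A/K_A$, and take $K_B=B$, so $N_0=K_A\times B$; evaluating the congruence on elements $(a,1)$ and reading off the $A$-coordinate modulo $K_A$ would make $\bar\alpha$ inner in $A/K_A$, again a contradiction.

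The main obstacle is precisely that automorphisms of $A\times B$ need not respect the factorisation, so that $K_A\times K_B$ is in general \emph{not} characteristic in $A\times B$ even when $K_A$ and $K_B$ are characteristic in the factors. Two devices overcome this: testing non-innerness against the normal subgroups $A\times 1$ and $1\times B$, which every inner automorphism preserves irrespective of the cross maps, and Baumslag's trick of Lemma \ref{characteristic}, which replaces the non-characteristic product $K_A\times K_B$ by a genuinely characteristic subgroup while preserving the witness. The symmetric variants (a non-trivial $\beta$, or $\delta$ non-inner) are handled by interchanging the roles of $A$ and $B$.
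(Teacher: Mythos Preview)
Your argument is correct and your case split and use of Remark \ref{stable factors} match the paper's, but the technical execution is genuinely different. The paper does not pass through an arbitrary characteristic $K\le N_0$ via Lemma \ref{characteristic}; instead it proves a dedicated ``Baumslag for direct products'' lemma (Lemma \ref{Baumslag direct}) which, given $K_A\in\NC(A)$ and $K_B\in\NC(B)$, produces $L_A\le K_A$ and $L_B\le K_B$ such that $L_A\times L_B$ is itself \emph{fully characteristic} in $A\times B$ (by intersecting all co-$\C$ subgroups of bounded index in each factor and checking that every homomorphism $A\to A$, $A\to B$, $B\to A$, $B\to B$ respects these). The paper then works directly in the quotient $A/L_A\times B/L_B$, where the induced automorphism visibly has non-trivial cross map (case (i)) or a non-inner diagonal component (case (ii)).

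Your route is more economical: you never need the characteristic subgroup to be a product, because you only use it to push the congruence $\phi(x)\equiv gxg^{-1}$ down from $K$ to the coarser $N_0=K_A\times K_B$, where coordinates can be read off. This bypasses Lemma \ref{Baumslag direct} entirely and relies only on the elementary Lemma \ref{characteristic}. The paper's approach, by contrast, yields the extra structural information that the witnessing characteristic subgroup can be taken of product form, and makes the induced maps $\tilde\psi_A$, $\tilde\psi_B$, $\tilde\psi_{A,B}$, $\tilde\psi_{B,A}$ explicit; this is not needed for Proposition \ref{cias-direct} itself but is the sort of thing one might want for finer control in later applications.
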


For a group $G$ we will use $\End(G)$ to denote the set of all endomorphisms of $G$. Similarly, for groups $A,B$ we will use $\Hom(A,B)$ denote the set of all homomorphisms from $A$ to $B$. Note that $\Hom(A,A) = \End(A)$ for every group $A$.

Now let $A,B$ be groups and let $\phi \in \End(A \times B)$ be arbitrary. Set $\phi_A = \phi \restriction_{A \times \{1\}}$ and $\phi_B = \phi \restriction_{\{1\}\times B}$. Obviously, for $a \in A, b\in B$ we have $\phi((a,b)) = \phi_A((a,1))\phi_B((1,b))$. It is easy to see that there are uniquely given $\alpha \in \End(A)$ and $\gamma \in \Hom(A,B)$ such that $\phi_A((a,1)) = (\alpha(a),\gamma(a))$. Similarly, there are uniquely given $\delta \in \Hom(B,A)$ and $\beta \in \End((1,B))$ such that $\phi_B((1,b)) = (\delta(b),\beta(b))$. We sum up this simple observation in the following simple remark, which will be crucial for proving Proposition \ref{cias-direct}.
\begin{remark}
	\label{automorphism of a direct product}
	Let $A,B$ be groups. For every $\phi \in \End(A\times B)$ there are uniquely given $\alpha \in \End(A)$, $\gamma \in \Hom(A,B)$ and $\beta \in \End(B)$, $\delta \in \Hom(B,A)$ such that $\phi((a,b)) = (\alpha(a)\delta(b), \gamma(a)\beta(b))$ for all $a \in A$ and $b \in B$.
\end{remark}

Let $A, B$ be groups, suppose that $K_A \normleq A$, $K_B \normleq B$ and let $\psi_A \colon  A \to A/K_A$, $\psi_B \colon B \to B/K_B$ be the corresponding natural projections. Clearly, the map 
\begin{displaymath}
	\tilde{\psi}_{A,B} \colon \Hom(A,B) \to \Hom(A/K_A,B/K_B)
\end{displaymath}
given by 
\begin{displaymath}
	\tilde{\psi}_{A,B}(\phi)(a K_A) = \phi(a)K_B,
\end{displaymath}
for all $a \in A$ and $\phi \in \Hom(A,B)$ is well defined if and only if $\phi(K_A) \subseteq K_B$ for every $\phi \in \Hom(A,B)$, or equivalently, if $K_A \subseteq \phi^{-1}(K_B)$ for every $\phi \in \Hom(A,B)$. We use this observation together with Remark \ref{automorphism of a direct product} to show that the idea of the proof of Lemma \ref{characteristic} can be adapted to direct products in a way that the preserves the  structure of a direct product.

\begin{lemma}
\label{Baumslag direct}
	Let $A,B$ be finitely generated groups and let $K_A \in \NC(A)$, $K_B \in \NC(B)$ be arbitrary. Then there are $L_A \in \NC(A)$, $L_B \in \NC(B)$ such that all of the following hold:
	\begin{enumerate}
		\item $L_A \leq K_A$ and $L_B \leq K_B$,
		\item $L_A$ is fully characteristic in $A$,
		\item $L_A \subseteq \gamma^{-1}(L_B)$ for all $\gamma \in \Hom(A,B)$,
		\item $L_B$ is fully characteristic in $B$,
		\item $L_B \subseteq \delta^{-1}(L_A)$ for every $\delta \in \Hom(B,A)$
		\item $L_A \times L_B$ is fully characteristic in $A \times B$.
	\end{enumerate} 
\end{lemma}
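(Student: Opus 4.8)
The plan is to imitate the construction $L = \bigcap_{\alpha \in \Aut(G)} \alpha^{-1}(K)$ from Lemma \ref{characteristic}, but with one essential modification. Conditions (3) and (5) couple $L_A$ and $L_B$ through the cross-homomorphisms in $\Hom(A,B)$ and $\Hom(B,A)$, so I cannot define $L_A$ by a preimage intersection over $\End(A)$ alone and then treat $L_B$ separately. Instead I would work in the small category with two objects $A,B$ whose morphisms are $\End(A)\cup\Hom(A,B)\cup\Hom(B,A)\cup\End(B)$, and intersect over \emph{all finite composites} of such morphisms. Concretely, let $S_A$ be the set of all homomorphisms $w = f_k\circ\cdots\circ f_1$ (including the empty composite $\id_A$) with source $A$, so each $w\in S_A$ has target $A$ or $B$; define $S_B$ symmetrically. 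Then set
\[
L_A = \bigcap_{\substack{w \in S_A \\ w\colon A \to A}} w^{-1}(K_A) \;\cap\; \bigcap_{\substack{w \in S_A \\ w\colon A \to B}} w^{-1}(K_B),
\]
and define $L_B$ by interchanging the roles of $A$ and $B$. Since $\id_A\in S_A$, the term $\id_A^{-1}(K_A)=K_A$ appears in the intersection, forcing $L_A\subseteq K_A$; likewise $L_B\subseteq K_B$, which is (1).

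First I would check that these are genuinely co-$\C$ subgroups of bounded index. If $w\colon A\to A$ then $A/w^{-1}(K_A)$ embeds into $A/K_A$, and if $w\colon A\to B$ then $A/w^{-1}(K_B)$ embeds into $B/K_B$; by closure property (c1) each such preimage lies in $\NC(A)$, and its index is at most $\max(|A:K_A|,|B:K_B|)$. Because $A$ is finitely generated it has only finitely many subgroups of any bounded index, so the displayed (a priori infinite) intersection reduces to an intersection of finitely many distinct members of $\NC(A)$; by (c2) it follows that $L_A\in\NC(A)$, and symmetrically $L_B\in\NC(B)$.

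The invariance statements then fall out formally from the fact that $S_A$ and $S_B$ are closed under precomposition by the appropriate morphisms. For $\alpha\in\End(A)$ one has $\alpha^{-1}(L_A)=\bigcap_{w\in S_A}(w\alpha)^{-1}(\,\cdot\,)$, and since $w\alpha\in S_A$ whenever $w\in S_A$, this is an intersection over a subfamily of the one defining $L_A$, giving $\alpha^{-1}(L_A)\supseteq L_A$, i.e.\ $\alpha(L_A)\subseteq L_A$; this is (2), and (4) is identical with $A,B$ swapped. The same bookkeeping applied to $\gamma\in\Hom(A,B)$ uses that $w'\gamma\in S_A$ for every $w'\in S_B$, yielding $\gamma^{-1}(L_B)\supseteq L_A$, which is (3); condition (5) is symmetric. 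Finally, for (6) I would invoke Remark \ref{automorphism of a direct product}: writing an arbitrary $\phi\in\End(A\times B)$ as $\phi((a,b))=(\alpha(a)\delta(b),\gamma(a)\beta(b))$, properties (2) and (5) place the first coordinate in $L_A$ and properties (3) and (4) place the second in $L_B$ for all $(a,b)\in L_A\times L_B$, so $\phi(L_A\times L_B)\subseteq L_A\times L_B$.

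The main obstacle to navigate is the mutually recursive, a priori infinite nature of the definition: one genuinely cannot build $L_A$ and $L_B$ independently, and the natural intersection ranges over the whole (infinite) monoid of composites. This is exactly what the bounded-index-plus-finite-generation argument of the second paragraph resolves, collapsing the infinite intersection to a finite one and certifying membership in $\NC$. Once that point is settled, all six conditions are routine closure computations, with Remark \ref{automorphism of a direct product} doing the bookkeeping for the direct-product statement (6).
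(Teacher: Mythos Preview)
Your proof is correct, but the paper takes a cleaner route that sidesteps the category of composites entirely. Instead of intersecting preimages of $K_A$ and $K_B$ under all words in $\End(A)\cup\Hom(A,B)\cup\Hom(B,A)\cup\End(B)$, the paper simply sets $k=\max\{|A:K_A|,|B:K_B|\}$ and defines $L_A=\bigcap\{M\in\NC(A):|A:M|\le k\}$ and $L_B=\bigcap\{N\in\NC(B):|B:N|\le k\}$, i.e.\ it intersects \emph{all} co-$\C$ subgroups of bounded index, not just the ones arising as preimages. Conditions (2)--(5) then follow in one line each, because for any $\gamma\in\Hom(A,B)$ and any $N\in\NC(B)$ with $|B:N|\le k$ one has $\gamma^{-1}(N)\in\NC(A)$ with $|A:\gamma^{-1}(N)|\le k$, so $\gamma^{-1}(N)$ already appears among the terms defining $L_A$; no iteration of composites is needed. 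Your bounded-index-plus-finite-generation step is exactly the mechanism the paper uses to make its intersection finite, and your treatment of (6) via Remark~\ref{automorphism of a direct product} is identical to the paper's. The trade-off: your construction produces a potentially larger $L_A$ (you only intersect the subgroups that are forced), while the paper's construction is coarser but avoids the bookkeeping of the free category on two objects.
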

\begin{proof}
	Set $k = \max\{|A:K_A|,|B:K_B|\}$ and denote
	\begin{displaymath} 
	\begin{split}
		\mathcal{L}_A = \{M \in \NC(A) \mid |A:M| \leq k\},\\
		\mathcal{L}_B = \{N \in \NC(B) \mid |B:N| \leq k\}.
	\end{split}
	\end{displaymath}	
	As $A$ is finitely generated, we see that for every $n \in \mathbb{N}$ there are only finitely many $H_A \leq A$ such that $|A:H_A| = n$, hence we see that $\mathcal{L}_A$ is a finite subset of $\NC(A)$. By a similar argument we see that $\mathcal{L}_B$ is a finite subset of $\NC(B)$. 
		Now set $L_A = \bigcap_{M \in \mathcal{L}_A}M$ and $L_B = \bigcap_{N \in \mathcal{L}_B}N$. As $\mathcal{L}_A$ is a finite subset of $\NC(A)$ we see that $L_A \in \NC(A)$ and by an analogous argument we see that $L_B \in \NC(B)$.
	
	Let $\alpha_0 \in \Hom(A,A)$ be arbitrary. Note that $\alpha_0^{-1}(M) \in \NC(A)$ and $|A:\alpha_0^{-1}(M)| \leq |A:M|$ for every $M \in \NC(A)$. Thus if $M \in \mathcal{L}_A$ then $\alpha_0^{-1}(M) \in \mathcal{L}_A$. We see that
	\begin{displaymath}
		\alpha_0^{-1}(L_A) 	= \alpha_0^{-1}\left( \bigcap_{M \in \mathcal{L}_A}M \right) = \bigcap_{M \in  \mathcal{L}_A} \alpha_0^{-1}(M) \supseteq \bigcap_{M\in \mathcal{L}_A} M = L_A.						
	\end{displaymath}
	and thus $L_A \subseteq \alpha_0^{-1}(L_A)$ for every $\alpha_0 \in \Hom(A,A)$, i.e. $L_A$ is fully characteristic in $A$.
	
	Similarly, for $\gamma_0 \in \Hom(A,B)$ we have $\gamma_0^{-1}(N) \in \NC(A)$ and $|A \colon \gamma_0^{-1}(N)| \leq |B:N|$ for every $N \in \NC(B)$ and thus if $N \in \mathcal{L}_B$ then $\gamma_0^{-1}(N) \in \mathcal{L}_A$. We see that
	\begin{displaymath}
		\gamma_0^{-1}(L_B) = \gamma_0^{-1}\left( \bigcap_{N \in \mathcal{L}_B} N \right) = \bigcap_{N \in \mathcal{L}_B} \gamma_0^{-1}(N) \supseteq \bigcap_{M \in \mathcal{L}_A}M = L_A
	\end{displaymath}
	and thus $L_A \subseteq \gamma_0^{-1}(L_B)$ for every $\gamma_0 \in \Hom(A,B)$.
	
	Using analogous arguments one can easily check that $L_B \subseteq \beta_0^{-1}(L_B)$ for every $\beta_0 \in \Hom(B,B)$, i.e. $L_B$ is fully characteristic in $B$, and $L_B \subseteq \delta_0^{-1}(L_A)$ for every $\delta_0 \in \Hom(B,A)$.
	
	Now, let $\phi \in \End(A \times B)$ be arbitrary. Following Remark \ref{automorphism of a direct product} we see that there are uniquely given $\alpha \in \Hom(A,A)$, $\beta \in \Hom(B,B)$, $\gamma \in \Hom(A,B)$ and $\delta \in \Hom(A,B)$ such that $\phi((a,b)) = (\alpha(a)\delta(b),\beta(b)\gamma(a))$ for all $a \in A$, $b \in B$. Note that $\alpha(L_A) \leq L_A$, $\beta(L_B) \leq L_B$, $\gamma(L_A) \leq L_B$ and $\delta(L_B) \leq L_A$. We see that
	\begin{displaymath}
		\phi(L_A \times L_B) \subseteq \alpha(L_A)\delta(L_B) \times \beta(L_B)\gamma(L_A) \subseteq L_A \times L_B
	\end{displaymath}
	and hence $L_A \times L_B$ is fully characteristic in $A \times B$.
\end{proof}

We say that a homomorphism $\alpha \colon A \to B$, where $A,B$ are groups, is \emph{trivial} if $\alpha(a) = 1$ for all $a \in A$. Before we proceed to the proof of Proposition \ref{cias-direct} we state one simple observation.
\begin{remark}
	\label{stable factors}
	Let $A,B$ be groups and let $\phi \in \Aut(A \times B)$. If $\phi \in \Inn(A \times B)$ then $\phi(A \times \{1\}) \subseteq A \times \{1\}$ and $\phi(\{1\} \times B) \subseteq \{1\}\times B$.
\end{remark}

Now we are ready to prove Proposition \ref{cias-direct}
\begin{proof}
	Let $\phi \in \Aut(A \times B)$ be arbitrary such that $\phi \not\in \Inn(A \times B)$. Following Remark \ref{stable factors} we see that there are two disjoint cases:
	\begin{itemize}
		\item[(i)] either $\phi(A \times \{1\}) \not \subseteq A \times \{1\}$ or $\phi(\{1\} \times B) \not \subseteq \{1\} \times B$,
		\item[(ii)] $\phi(A \times \{1\}) \subseteq A \times \{1\}$ and $\phi(\{1\} \times B) \subseteq \{1\} \times B$.
	\end{itemize}
	Following Remark \ref{automorphism of a direct product} we see that there are $\alpha \in \End(A)$, $\delta \in \Hom(B,A)$, $\gamma \in \Hom(A,B)$ and $\beta \in \End(B)$ such that $\phi(a,b) = (\alpha(a)\delta(b),\gamma(a)\beta(b))$ for all $a \in A$, $b \in B$.
		
	Suppose that (i) is the case. This means that either $\gamma$ is non-trivial or $\delta$ is non-trivial. Without loss of generality we may assume that $\delta$ is non-trivial, i.e. there is $b_0 \in B \setminus \{1\}$ such that $\delta(b_0) \in A \setminus \{1\}$. As both $A,B$ are residually-$\C$ there are $K_A \in \NC(A)$ and $K_B \in \NC(B)$ such that $\delta(b_0) \not\in K_A$ and $b_0 \not\in K_B$. By Lemma \ref{Baumslag direct} we see that there are $L_A \in \NC(A)$ and $L_B \in \NC(B)$ such that $L_A \leq K_A$, $L_B \leq K_B$, $L_A$ is fully characteristic in $A$, $L_B$ is fully characteristic in $B$, $L_A \subseteq \gamma^{-1}(L_B)$ for all $\gamma \in \Hom(A,B)$, $L_B \in \delta^{-1}(L_A)$ for every $\delta \in \Hom(B,A)$ and $L_A \times L_B$ is fully characteristic in $A \times B$. We see that the natural projections $\psi_A \colon A \to A/L_A$, $\psi_B \colon B \to B/L_B$ induce maps 
	\begin{displaymath}
	\begin{split}
		\tilde{\psi}_A \colon 		\Hom(A,A) \to \Hom(A/L_A, A/L_A),\\
		\tilde{\psi}_B \colon 		\Hom(B,B) \to \Hom(B/L_B,B/L_B),\\
		\tilde{\psi}_{A,B} \colon 	\Hom(A,B) \to \Hom(A/L_A, B/L_B),\\
		\tilde{\psi}_{B,A} \colon 	\Hom(B,A) \to \Hom(B/L_B, A/L_A).
	\end{split}
	\end{displaymath}
	Let $\psi \colon A\times B \to (A \times B)/(L_A \times L_B) = A/L_A \times B/B_L$ be the natural projection. Clearly, for the induced homomorphism $\tilde{\psi}\colon \Aut(A\times B) \to \Aut(A/L_A \times B/L_B)$ we have
	\begin{align*}
		\tilde{\psi}(\phi)(aL_A,bL_B) &= \left(\tilde{\psi}_A(\alpha)(aL_A)\tilde{\psi}_{B,A}(\delta)(b L_B), \tilde{\psi}_{A,B}(\gamma)(a L_A)\tilde{\psi}_B(\beta)(bL_B)\right)\\
		& =\left(\alpha(a)\delta(b)L_A, \gamma(a)\beta(b)L_B\right)
	\end{align*}
	for all $a\in A$, $b \in B$. Note that $b_0 \not \in L_B$ and $\delta(b_0) \not\in L_A$, thus $b_0 L_B$ is not the identity in $B/L_B$ and $\delta(b_0)L_A$ is not the identity in $A/L_A$. As $\tilde{\psi}_{B,A}(\delta)(b_0 L_B) = \delta(b_0)L_A$ we see that $\tilde{\psi}_{B,A}(\delta)$ is not trivial and consequently $\tilde{\psi}(\phi)(\{1\} \times B/L_B) \not \subseteq \{1\} \times B/L_B$. This means that $\tilde{\psi}(\phi) \not \in \Inn(A/L_A \times B/L_B)$.
	
	Suppose that (ii) is the case. This means that $\delta$, $\gamma$ are trivial and either $\alpha \in \Aut(A) \setminus \Inn(A)$ or $\beta \in \Aut(B) \setminus \Inn(B)$. Without loss of generality we may assume that $\alpha \not\in \Inn(A)$. Since $A$ is $\C$-IAS by assumption we see that there is $K_A \in \NC(A)$ characteristic in $A$ such that for the induced homomorphism $\tilde{\kappa}\colon \Aut(A) \to \Aut(A/L_A)$ we have $\tilde{\kappa}(\alpha) \not\in \Inn(A/K_A)$. Note that $B \in \NC(B)$, thus we can set $K_B = B$ and use Lemma \ref{Baumslag direct} to obtain $L_A \in \NC(A)$ and $L_B \in \NC(B)$ with the desired properties. We see that for the homomorphism $\tilde{\psi} \colon \Aut(A\times B) \to \Aut(A/L_A \times B/L_B)$ induced by the natural projection $\psi \colon A\times B \to A/L_A \times B/L_B$ we have $\tilde{\psi}(\phi)(a L_A , b L_B) = (\alpha(a) L_A, \beta(b)L_B)$ for all $a \in A$, $b \in B$. Since $L_A \leq K_A$ we see that $\tilde{\psi}_A(\alpha) \not \in \Inn(A/L_A)$ and thus $\tilde{\psi}(\phi) \not\in \Inn(A/L_A \times B/L_B)$.
	
	In each case we were able to realise the automorphism $\phi \in \Aut(A \times B) \setminus \Inn(A \times B)$ as non-inner automorphism of a $\C$-quotient of $A\times B$ and hence we see that the group $A\times B$ is $\C$-IAS.
\end{proof} 

Applying Proposition \ref{cias-direct} to the class of all finite groups we get the following corollary.
\begin{corollary}
	\label{ias direct}
	Let $A,B$ be finitely generated groups RF groups and suppose that both $A$ and $B$ are IAS. Then $A \times B$ is IAS and, consequently, $\Out(A \times B)$ is RF.
\end{corollary}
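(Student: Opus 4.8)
The plan is to obtain this corollary as a direct specialisation of Proposition~\ref{cias-direct} to the class $\C$ of \emph{all} finite groups, followed by a short unwinding of the IAS property into residual finiteness of $\Out$. So the first thing I would do is verify that the class of all finite groups satisfies the two standing closure assumptions: a subgroup of a finite group is finite, giving (c1), and a direct product of two finite groups is finite, giving (c2). With this choice of $\C$, the predicate ``residually-$\C$'' is literally ``residually finite'' and ``$\C$-IAS'' is literally ``IAS'', matching the definitions of Section~\ref{cias}.

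Given these identifications, the hypotheses of Proposition~\ref{cias-direct} are met word for word: $A$ and $B$ are finitely generated, IAS (hence $\C$-IAS), and RF (hence residually-$\C$). Applying the proposition therefore yields that $A\times B$ is $\C$-IAS, that is, IAS. This is the entire content of the first assertion, and no new argument is required beyond checking that the general framework specialises correctly.

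For the ``consequently'' clause I would spell out the standard observation, alluded to earlier in Section~\ref{cias}, that an IAS group has residually finite outer automorphism group. Given a non-trivial element of $\Out(A\times B)$, pick a representative $\phi\in\Aut(A\times B)\setminus\Inn(A\times B)$. By the IAS property there is a characteristic $K\in\NC(A\times B)$ such that the induced automorphism $\tilde{\kappa}(\phi)$ of the finite quotient $(A\times B)/K$ is non-inner. Because $K$ is characteristic, $\tilde{\kappa}$ descends to a homomorphism $\Out(A\times B)\to\Out\bigl((A\times B)/K\bigr)$ onto a finite group, and it sends the class of $\phi$ to a non-trivial element. Since the chosen outer automorphism was arbitrary, this exhibits $\Out(A\times B)$ as residually finite.

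Since the substantive work is carried out in Proposition~\ref{cias-direct}, I do not expect a genuine obstacle here; the corollary is a matter of checking that the machinery applies to the class of all finite groups and translating the terminology. The single point deserving a moment of care is the passage from IAS to residual finiteness of $\Out$: one must use that \emph{characteristicity} of $K$ is precisely what makes the map $\Out(A\times B)\to\Out\bigl((A\times B)/K\bigr)$ well defined, and that \emph{finiteness} of the target is what turns the non-inner image into a finite separation.
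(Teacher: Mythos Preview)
Your proposal is correct and matches the paper's approach exactly: the paper presents this corollary without a separate proof, simply stating that it follows by applying Proposition~\ref{cias-direct} to the class of all finite groups. Your additional paragraph spelling out why IAS implies residual finiteness of $\Out$ is a faithful expansion of the observation the paper makes earlier in Section~\ref{cias}.
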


%%%%%%%%%%%%%%%%%%%%%%%%%%%%%%%%%%%%%%%%%%%%%%%%%%%%%%%%%%%%%%%%%%%%%%%%%%%%%%%%%%%%%%%%%%%%%%%%%%%%%%%%%%%%%%%%%%%%
%%%%%%%%%%%%%%%%%%%%%%%%%%%%%%%%%%%%%%%%%%%%%%%%%%%%%%%%%%%%%%%%%%%%%%%%%%%%%%%%%%%%%%%%%%%%%%%%%%%%%%%%%%%%%%%%%%%%
\section{Examples of $\C$-IAS groups and Grossmans method}\label{cias_examples}
\label{examples}
%%%%%%%%%%%%%%%%%%%%%%%%%%%%%%%%%%%%%%%%%%%%%%%%%%%%%%%%%%%%%%%%%%%%%%%%%%%%%%%%%%%%%%%%%%%%%%%%%%%%%%%%%%%%%%%%%%%%
Let $G$ be a group and suppose that $H \leq G$. For $g \in G$ we will use $g^H$ to denote $\{hgh^{-1} \mid h \in H\}$, the $H$-conjugacy class of $g$. For $f,g \in G$ we will use $f \sim_H g$ to denote that $f \in g^H$. Suppose that $f \not\sim_G g$. We say that the pair $(f,g)$ is \emph{$\C$-conjugacy distinguishable} ($\C$-CD) in $G$ if there is a group $F \in \C$ and a homomorphism $\phi \colon G \to F$ such that $\phi(f) \not\sim_F \phi(g)$. Equivalently, the pair $(f,g)$ is $\C$-CD in $G$ if there is $N \in \NC(G)$ such that $f^G N \cap gN = \emptyset$ in $G$. We say that a group is \emph{$\C$-conjugacy separable} ($\C$-CS) if for every $f,g \in G$ such that $f\not\sim_G g$ there is a group $F \in \C$ and a homomorphism $\phi \colon G \to F$ such that $\phi(f) \not\sim_F \phi(g)$. Clearly, a group $G$ is $\C$-CS if for all $f,g \in G$ the pair $(f,g)$ is $\C$-CD whenever $f \not\sim_G g$. It is easy to see that the conjugacy class $g^G$ is $\C$-separable in $G$ if the pair $(f,g)$ is $\C$-CD for every $f \in G \setminus g^G$.

To simplify our proofs we will often use the following remark.
\begin{remark}
\label{CCD simplification}
	Let $G$ be a group and let $f,g \in G$ such that $f \not\sim_G g$. The pair $(f,g)$ is $\C$-CD if and only if there is a group $F$ and a homomorphism $\phi \colon G \to F$ such that $\phi(f) \not\sim_F \phi(g)$ and the pair $(\phi(f),\phi(g))$ is $\C$-CD in $F$.
\end{remark}

The following lemma uses an adaptation of the method that Grossman used to prove \cite[Theorem 1]{grossman}.
\begin{lemma}[Generalised Grossman's criterion]%\todo{I think I would like to call this one "Generalised Grossman's criterion"}
	\label{my Grossman}
	Let $G$ be a finitely generated group and assume that for every $\phi \in \Aut(G) \setminus \Inn(G)$ there is an element $g \in G$ such that $\phi(g) \not\sim_G g$ and the pair $(\phi(g),g)$ is $\C$-CD in $G$. Then the group $G$ is $\C$-IAS.
\end{lemma}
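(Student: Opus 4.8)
The plan is to unwind the definition of $\C$-IAS and feed the hypothesis directly into it, with Lemma~\ref{characteristic} as the only non-formal ingredient. Fix an arbitrary $\phi \in \Aut(G)\setminus\Inn(G)$; the goal is to produce a characteristic $K \in \NC(G)$ for which the induced automorphism $\tilde{\kappa}(\phi) \in \Aut(G/K)$ is non-inner. By hypothesis there is $g \in G$ with $\phi(g)\not\sim_G g$ such that the pair $(\phi(g),g)$ is $\C$-CD in $G$. Unpacking the definition of $\C$-CD, this yields $N \in \NC(G)$ with $\phi(g)^G N \cap gN = \emptyset$; equivalently, the images of $g$ and $\phi(g)$ lie in distinct conjugacy classes of the $\C$-quotient $G/N$.

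The next step is to replace $N$ by a characteristic subgroup without destroying this separation. Since $G$ is finitely generated, Lemma~\ref{characteristic} provides $K \in \NC(G)$ with $K \leq N$ and $K$ characteristic in $G$. I would then observe that non-conjugacy is inherited by the finer quotient $G/K$: the canonical projection $G/K \to G/N$ carries conjugate elements to conjugate elements, so were $gK$ and $\phi(g)K$ conjugate in $G/K$, their images $gN$ and $\phi(g)N$ would be conjugate in $G/N$, contrary to the previous paragraph. Hence the images of $g$ and $\phi(g)$ are non-conjugate in $G/K$.

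To finish, note that because $K$ is characteristic the map $\tilde{\kappa}\colon \Aut(G)\to\Aut(G/K)$ is well defined and $\tilde{\kappa}(\phi)(gK)=\phi(g)K$. If $\tilde{\kappa}(\phi)$ were inner, realised by conjugation by some $hK \in G/K$, then $\phi(g)K = \tilde{\kappa}(\phi)(gK) = hgh^{-1}K$ would exhibit $\phi(g)K$ as a conjugate of $gK$ in $G/K$, contradicting the non-conjugacy just established. Thus $\tilde{\kappa}(\phi)\notin\Inn(G/K)$, and as $\phi$ was arbitrary, $G$ is $\C$-IAS. The argument is almost entirely a translation of definitions; the only genuine content, and hence the one point to get right, is the appeal to Lemma~\ref{characteristic} (where finite generation of $G$ enters), together with the routine but essential observation that passing from $G/N$ to the finer quotient $G/K$ cannot create new conjugacies.
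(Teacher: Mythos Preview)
Your proof is correct and follows essentially the same approach as the paper's own proof. The only cosmetic difference is that you invoke Lemma~\ref{characteristic} to pass from $N$ to the characteristic $K\le N$, whereas the paper reproduces that construction inline by setting $K=\bigcap_{\varphi\in\Aut(G)}\varphi^{-1}(N)$; the subsequent argument that $\tilde{\kappa}(\phi)\notin\Inn(G/K)$ is identical.
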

\begin{proof}
	Take any $\phi \in \Aut(G) \setminus \Inn(G)$. By assumption, there is $g \in G$ such that $\phi(g) \not\sim_G g$ and the pair $(\phi(g),g)$ is $\C$-CD. There is $N \in \NC(G)$ such that $\phi(g)N \cap g^G N= \emptyset$. Set
	\begin{displaymath}
		K = \bigcap_{\varphi \in \Aut(G)} \varphi^{-1}(N).
	\end{displaymath}
	Obviously, $K$ is characteristic in $G$. Also, $|G : \varphi^{-1}(N)| \leq |G:N|$ for every $\varphi \in \Aut(G)$. As $G$ is finitely generated we see that $K$ is actually an intersection of finitely many co-$\C$ subgroups of $G$ and thus $K \in \NC(G)$. Let $\kappa \colon G \to G/K$ be the natural projection and let $\tilde{\kappa} \colon \Aut(G) \to \Aut(G/K)$ be the induced homomorphism. As $K \leq N$ we see that $\phi(g)K \cap g^G K = \emptyset$. This means that $\tilde{\kappa}(\phi)(gK) = \phi(g)K \not\sim_{G/K} gK$ and thus $\tilde{\kappa}(\phi) \not\in \Inn(G/K)$. We see that $G$ is $\C$-IAS.
\end{proof}

We say that a group $G$ is \emph{$\C$-Grossmanian} if $G$ is finitely generated, $\C$-CS group with Grossman's property (A).

\begin{corollary}
\label{grossmanian}
	If $G$ is a $\C$-Grossmanian group then $G$ is $\C$-IAS.
\end{corollary}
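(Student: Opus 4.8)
The plan is to deduce this immediately from the Generalised Grossman's criterion (Lemma \ref{my Grossman}), so that the only real work is verifying its hypothesis from the two defining properties of a $\C$-Grossmanian group. Recall that $G$ being $\C$-Grossmanian means $G$ is finitely generated, $\C$-CS, and has Grossman's property (A). Finite generation is precisely the standing assumption of Lemma \ref{my Grossman}, so it remains to produce, for each $\phi \in \Aut(G) \setminus \Inn(G)$, an element $g \in G$ with $\phi(g) \not\sim_G g$ such that the pair $(\phi(g),g)$ is $\C$-CD in $G$.

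First I would exploit Grossman's property (A). By definition this says $\Autpi(G) = \Inn(G)$, so any $\phi \in \Aut(G) \setminus \Inn(G)$ fails to be pointwise inner. Unwinding the definition of a pointwise inner automorphism, this means there exists some $g \in G$ for which $\phi(g)$ is not conjugate to $g$, i.e. $\phi(g) \not\sim_G g$. This supplies the witness element required by the criterion.

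Next I would invoke $\C$-conjugacy separability. Since $\phi(g) \not\sim_G g$ and $G$ is $\C$-CS, the pair $(\phi(g),g)$ is $\C$-CD in $G$ directly from the definition of a $\C$-CS group. Thus both conditions in the hypothesis of Lemma \ref{my Grossman} hold for every non-inner automorphism $\phi$, and the lemma immediately yields that $G$ is $\C$-IAS, completing the argument.

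As for difficulty, there is essentially no obstacle: the corollary is a formal consequence of Lemma \ref{my Grossman} together with the definitions, since the genuine conceptual content has already been absorbed into that lemma. The only point needing care is keeping the two hypotheses in their proper roles, namely that property (A) is what produces the non-conjugate pair $(\phi(g),g)$, while $\C$-conjugacy separability is what certifies that this pair can be detected in a $\C$-quotient.
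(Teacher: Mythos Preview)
Your proposal is correct and follows essentially the same argument as the paper: use Grossman's property (A) to find $g$ with $\phi(g)\not\sim_G g$, use $\C$-CS to conclude the pair $(\phi(g),g)$ is $\C$-CD, and then invoke Lemma \ref{my Grossman}.
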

\begin{proof}
	Let $\phi \in \Aut(G) \setminus \Inn(G)$ be arbitrary. As $G$ has Grossman's property (A) we see that there is $g \in G$ such that $\phi(g) \not\sim_G g$. As $G$ is $\C$-CS we see that the pair $(\phi(g),g)$ is $\C$-CD in $G$. The group $G$ is $\C$-IAS by Lemma \ref{my Grossman}.
\end{proof}
As mentioned in the introduction, applying Corollary \ref{grossmanian} to the class of all finite groups we see that Grossmanian groups are IAS.

We say that a group $G$ satisfies the \emph{centraliser condition} (CC) if for every $g \in G$ and $K \normfileq G$ there is $L \normfileq G$ such that $L \leq K$ and
\begin{displaymath}
	C_{G/L}(\psi(g)) \leq \psi\left( C_G(g)K\right) \mbox{ in }G/L
\end{displaymath} 
where $\psi \colon G \to G/L$ is the natural projection.

We say that a group $G$ is \emph{hereditarily conjugacy separable} (HCS) if $G$ is CS and for every $H \fileq G$ we have that $H$ is CS as well. The following theorem was proved by Minasyan in \cite[Proposition 3.2]{raags}.
\begin{theorem}
\label{hcs and cc}
	Let $G$ be a group. Then the following are equivalent:
	\begin{itemize}
		\item[(a)] $G$ is HCS;
		\item[(b)] $G$ is CS and satisfies CC.
	\end{itemize}
\end{theorem}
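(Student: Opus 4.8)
The plan is to translate both conditions into statements about the profinite completion $\widehat{G}$ and read off the equivalence there. Write $\PT(G)$ for the profinite topology (here $\C$ is the class of all finite groups); in either direction the hypotheses force $G$ to be residually finite, so the canonical map $\iota\colon G\to\widehat G$ is injective and I identify $G$ with $\iota(G)$. I would first record three formal facts. (i) $G$ is CS if and only if every conjugacy class $g^{G}$ is closed in $\PT(G)$, equivalently $g^{\widehat G}\cap G=g^{G}$, using that $g^{\widehat G}=\overline{g^{G}}$ by compactness of $\widehat G$ and continuity of conjugation. (ii) For $H\fileq G$ the topology $\PT(G)$ restricts to $\PT(H)$ on $H$, since every finite-index subgroup of $H$ contains its $G$-core, which is finite-index normal in $G$; consequently $\widehat H=\overline{\iota(H)}$ is an open subgroup of $\widehat G$ and $\iota^{-1}(\widehat H)=H$. (iii) CC is equivalent to the single equation $C_{\widehat G}(g)=\overline{C_G(g)}$ for all $g\in G$. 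Only the inclusion $C_{\widehat G}(g)\subseteq\overline{C_G(g)}$ has content; to get it from CC I would use $C_{\widehat G}(g)=\varprojlim_{L}C_{G/L}(gL)$ (an inverse limit of finite sets), so that the image of $C_{\widehat G}(g)$ in a fixed $G/K$ is $\bigcap_{L\le K}\mathrm{im}\bigl(C_{G/L}(gL)\to G/K\bigr)$, a decreasing intersection of subsets of a finite set; CC says exactly that some term of this intersection already lies in $\psi_K(C_G(g))$, which forces equality of the two closed subgroups.

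For (b)$\Rightarrow$(a) I fix $H\fileq G$ and check that $a^{H}$ is closed for each $a\in H$. If $a'\in H$ with $a'\not\sim_G a$, then CS of $G$ separates $a'$ from $a^{G}$ in some $G/N$, and intersecting with $H$ separates $a'$ from $a^{H}\subseteq a^{G}$; so the only case to treat is $a'=tat^{-1}\in H$ with $t\in G$. The full set of $\widehat G$-conjugators of $a$ to $a'$ is the coset $\iota(t)\,C_{\widehat G}(a)$, hence $a\sim_{\widehat H}a'$ iff $\iota(t)\,C_{\widehat G}(a)\cap\widehat H\neq\emptyset$. Using (iii), $\iota(t)\,C_{\widehat G}(a)=\overline{\iota(t\,C_G(a))}$, and since $\widehat H$ is open a closed set meets it iff its dense trace does; thus $a\sim_{\widehat H}a'$ iff $t\,C_G(a)\cap H\neq\emptyset$ iff $a\sim_H a'$. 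Therefore $a^{H}=a^{\widehat H}\cap H$ is closed in $\PT(H)$, so $H$ is CS and $G$ is HCS.

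For (a)$\Rightarrow$(b), HCS yields CS at once, so I only prove CC, contrapositively. If CC fails then by (iii) there are $g$ and $\xi\in C_{\widehat G}(g)\setminus\overline{C_G(g)}$; pick $M\normfileq G$ with $\xi\notin\overline{C_G(g)}\widehat M$ and lift the image of $\xi$ in $G/M$ to $t\in G$, so that $[t,g]\in M$ while $t\notin C_G(g)M$. Set $H=C_G(g)M\fileq G$; then $g,\,tgt^{-1}\in H$, and a one-line coset computation gives $t\,C_G(g)\cap H=\emptyset$, so $g\not\sim_H tgt^{-1}$. On the other hand $\iota(t)\xi^{-1}\in\widehat M\subseteq\widehat H$ (because $\iota(t)$ and $\xi$ agree modulo the normal subgroup $\widehat M$) and $\xi^{-1}\in C_{\widehat G}(g)$, so $\iota(t)\xi^{-1}\in\iota(t)\,C_{\widehat G}(g)\cap\widehat H$ and hence $g\sim_{\widehat H}tgt^{-1}$. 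Thus $g^{H}$ is not closed, $H$ is not CS, contradicting HCS; so CC holds.

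The step I expect to be the crux is (iii): matching the nested quantifiers of CC (for every $K$ there is $L\le K$ with $\dots$) to the clean completion-level equality $C_{\widehat G}(g)=\overline{C_G(g)}$. This rests on the stabilisation of a decreasing family of images in a finite quotient — that the image of a cofiltered limit of finite sets is the intersection of the images — and I would be careful to note that only finiteness of the quotients, not finite generation of $G$, is used, so the statement holds for arbitrary $G$. Once (iii) is in hand, both implications reduce to the single principle that a closed subset of $\widehat G$ meets an open subgroup exactly when its trace in $G$ does, applied first to conjugator cosets (for (b)$\Rightarrow$(a)) and then to the explicitly engineered pair $g,\,tgt^{-1}$ inside $H=C_G(g)M$ (for (a)$\Rightarrow$(b)).
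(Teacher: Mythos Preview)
The paper does not give its own proof of this theorem; it is quoted verbatim from Minasyan \cite[Proposition 3.2]{raags}. Your proposal is correct, and it is in fact essentially Minasyan's original argument: the key step~(iii), reformulating CC as the profinite equality $C_{\widehat G}(g)=\overline{C_G(g)}$, is exactly what is done there, and both implications are then read off by comparing conjugator cosets $tC_G(g)$ inside $G$ and $\iota(t)C_{\widehat G}(g)$ inside $\widehat G$ against the open subgroup $\widehat H$, just as you do. The only cosmetic difference is that Minasyan phrases the failure-of-CC direction in terms of open neighbourhoods rather than explicitly naming the subgroup $H=C_G(g)M$, but the content is identical.
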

Recall that a group is IAS if it is $\C$-IAS in the case when $\C$ is the class of all finite groups. Before we proceed to utilise Minasyan's theorem to show that virtually polycyclic groups are IAS we will need one more definition: we say that a group $G$ is \emph{double coset separable} if for every pair of finitely generated subgroups $H,K \leq G$ and an arbitrary element $g \in G$ the subset $HgK = \{hgk \mid h\in H, k \in K\}$ is separable in $\PT(G)$. Virtually polycyclic groups are double coset separable by \cite{polycyclic_DCS} and conjugacy separable by \cite{formanek, remeslennikov}. As every subgroup of a virtually polycyclic group is a virtually polycyclic group we see that virtually polycyclic groups are actually HCS.
\begin{lemma}
	\label{polycyclic groups are IS}
	Virtually polycyclic groups are IAS.
\end{lemma}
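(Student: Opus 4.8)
The plan is to verify the $\C$-IAS criterion directly for $\C$ the class of all finite groups, using everything recorded above: a virtually polycyclic group $G$ is finitely generated, residually finite (so $G$ embeds in its profinite completion $\widehat{G}$), hereditarily conjugacy separable, and double coset separable, and all of its subgroups are finitely generated and closed in $\PT(G)$. By Theorem \ref{hcs and cc} the property HCS supplies both conjugacy separability and the centraliser condition (CC). I fix $\phi \in \Aut(G)\setminus\Inn(G)$ and a finite generating set $g_1,\dots,g_m$ of $G$. Since an endomorphism agreeing with $\phi$ on $g_1,\dots,g_m$ equals $\phi$, the automorphism $\phi$ is inner if and only if some single element of $G$ conjugates each $g_i$ to $\phi(g_i)$. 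I split on whether $\phi$ moves a generator out of its conjugacy class.

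If $\phi(g_i)\not\sim_G g_i$ for some $i$, then I argue exactly as in the proof of Lemma \ref{my Grossman}: conjugacy separability makes the pair $(\phi(g_i),g_i)$ conjugacy distinguishable, so there is $N\normfileq G$ with $\phi(g_i)^G N\cap g_i N=\emptyset$; intersecting the finitely many subgroups $\varphi^{-1}(N)$ $(\varphi\in\Aut(G))$, each of index at most $|G:N|$, yields a characteristic $K\in\NC(G)$, and then $\tilde\kappa(\phi)(g_iK)=\phi(g_i)K\not\sim_{G/K}g_iK$ shows $\tilde\kappa(\phi)\notin\Inn(G/K)$.

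Otherwise $\phi(g_i)\sim_G g_i$ for every $i$; I choose $x_i\in G$ with $\phi(g_i)=x_i g_i x_i^{-1}$. Then $\phi$ is inner if and only if $\bigcap_{i=1}^m x_i C_G(g_i)\neq\emptyset$, so $\phi\notin\Inn(G)$ gives $\bigcap_{i=1}^m x_i C_G(g_i)=\emptyset$. The goal is a characteristic $K\in\NC(G)$ with $\bigcap_i \psi_K(x_i)\,C_{G/K}(\psi_K(g_i))=\emptyset$, since then no element of $G/K$ simultaneously conjugates each $\psi_K(g_i)$ to $\psi_K(\phi(g_i))$, whence $\tilde\kappa(\phi)\notin\Inn(G/K)$. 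I would produce this in two moves. First, using separability I find $K_0\in\NC(G)$ with $\bigcap_i x_i C_G(g_i)K_0=\emptyset$. Second, applying CC to each $g_i$ with target $K_0$ and intersecting the resulting subgroups, I obtain $L\in\NC(G)$, $L\le K_0$, with $C_{G/L}(\psi_L(g_i))\le\psi_L(C_G(g_i)K_0)$ for all $i$. A short lifting argument then finishes: if $\bar x\in\bigcap_i\psi_L(x_i)C_{G/L}(\psi_L(g_i))$, any lift $x\in G$ of $\bar x$ satisfies $x\in x_iC_G(g_i)K_0 L=x_iC_G(g_i)K_0$ for every $i$ (as $L\le K_0$), contradicting emptiness. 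Finally I replace $L$ by a characteristic $K\le L$ via Lemma \ref{characteristic}; the coset condition passes to the finer quotient $G/K$, and characteristicity makes $\tilde\kappa(\phi)$ well defined, completing this case.

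The crux, and the step I expect to be the main obstacle, is the first move above: turning $\bigcap_i x_i C_G(g_i)=\emptyset$ in $G$ into a single cofinite $K_0$ with $\bigcap_i x_i C_G(g_i)K_0=\emptyset$. Since each $x_iC_G(g_i)$ is $\PT(G)$-closed, compactness of $\widehat{G}$ makes this equivalent to $\bigcap_i x_i\overline{C_G(g_i)}=\emptyset$ in $\widehat{G}$, where CC guarantees $\overline{C_G(g_i)}=C_{\widehat{G}}(g_i)$. Separating finitely many centraliser cosets simultaneously is genuinely stronger than separating one double coset, and resolving this is the heart of the matter. I expect to obtain it either from an efficiency property of $\PT(G)$ on finite intersections of the (separable) centralisers $C_G(g_i)$, or by the following repackaging that reduces it to a single double coset: form the mapping torus $H=G\rtimes_\phi\mathbb{Z}$, which is again virtually polycyclic, hence double coset separable, with $C_H(G)$ finitely generated; a direct computation gives $\phi\in\Inn(G)$ if and only if $t\in G\,C_H(G)$, so $\phi\notin\Inn(G)$ yields $t\notin G\,C_H(G)$, and double coset separability of $H$ separates $t$ from $G\cdot 1\cdot C_H(G)$. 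Translating the resulting cofinite subgroup of $H$ back to $G$, and invoking CC for $H$ to prevent the centraliser of the subgroup $G$ from growing in the finite quotient, produces the required $K_0$. Either way, the delicate point is the interplay between intersections of centralisers and the profinite topology, which is exactly what the centraliser condition is designed to control.
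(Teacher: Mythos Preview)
Your two-case split and the handling of the non-pointwise-inner case are exactly as in the paper. The gap is precisely where you flag it: producing $K_0\in\NC(G)$ with $\bigcap_i x_iC_G(g_i)K_0=\emptyset$. Your profinite reformulation is circular: by the compactness argument you outline, the existence of such a $K_0$ is equivalent to $\bigcap_i x_iC_{\widehat G}(g_i)=\emptyset$ in $\widehat G$, and an element $\hat y$ of that intersection would satisfy $\hat y g\hat y^{-1}=\phi(g)$ for all $g\in G$, i.e.\ $\hat\phi\in\Inn(\widehat G)$. So you are reduced to showing that $\phi\notin\Inn(G)$ forces $\hat\phi\notin\Inn(\widehat G)$, which is equivalent (via Lemma \ref{characteristic} and the rest of your argument) to the IAS statement you are trying to prove. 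Your mapping-torus route also does not close: separating $t$ from $G\,C_H(G)$ in a finite quotient $H/M$ only gives $\bar t\notin \bar G\cdot\overline{C_H(G)}$, whereas ``$\bar\phi$ inner on $\bar G$'' is $\bar t\in \bar G\cdot C_{H/M'}(\bar G)$; bridging these requires controlling the centraliser of the \emph{subgroup} $G$ in finite quotients, and the CC of Theorem \ref{hcs and cc} only controls centralisers of elements. Iterating element-wise CC over the generators $g_i$ yields $C_{H/L}(\bar G)\subseteq\psi\bigl(\bigcap_i C_H(g_i)M\bigr)$, but $\bigcap_i C_H(g_i)M$ can strictly contain $C_H(G)M$, so the argument stalls at the same simultaneous-intersection issue.

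The paper closes this gap by repackaging the simultaneous condition as a \emph{single} double coset in the direct power $\overline G=G^n$: with $\overline c=(c_1,\dots,c_n)$, $\overline g=(g_1,\dots,g_n)$ and $\overline D\le\overline G$ the diagonal, one has $\bigcap_i c_iC_G(g_i)=\emptyset$ if and only if $\overline c\notin C_{\overline G}(\overline g)\,\overline D$. Since $\overline G$ is again virtually polycyclic, it is double coset separable, and both $C_{\overline G}(\overline g)$ and $\overline D$ are finitely generated; hence there is $N\normfileq\overline G$ with $\overline cN\cap C_{\overline G}(\overline g)\overline D=\emptyset$. Pulling $N$ back along the coordinate embeddings produces $K\normfileq G$ with $\bigcap_i c_iC_G(g_i)K=\emptyset$, after which your CC-and-refine step goes through verbatim. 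This diagonal trick is the missing idea; once you insert it, your outline becomes the paper's proof.
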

\begin{proof}
	Let $G$ be a virtually polycyclic group and let $\phi \in \Aut(G) \setminus \Inn(G)$ be arbitrary.
	
	There are two cases to be considered:
	\begin{itemize}
		\item[(i)] $\phi$ is not pointwise inner,
		\item[(ii)] $\phi$ is pointwise inner.
	\end{itemize}
	
	If (i) is the case then there is $g \in G$ such that $\phi(g) \not\sim g$. As stated before, virtually polycyclic groups are CS, thus there is $N \normfileq G$ such that $\phi(g)N \cap g^G N = \emptyset$. Following Lemma \ref{characteristic} we see that there is $N' \fileq G$ such that $N' \leq N$ and $N'$ is characteristic in $G$. Hence, if necessary, by replacing $N$ by $N'$ we might without loss of generality assume that $N$ is actually characteristic in $G$. Let $\nu \colon G \to G/N$ be the natural projection. Using the same argument as in the proof of Lemma \ref{my Grossman} we see that for the induced homomorphism $\tilde{\nu} \colon \Aut(G) \to \Aut(G/N)$ we have $\tilde{\nu}(\phi) \in \Aut(G/N)\setminus\Inn(G/N)$.
	
	Now suppose that (ii) is the case. Let $\{g_1, \dots, g_n \}\subseteq G$ be some generating set for $G$. By assumption for every $i\in \{1, \dots, n\}$ there is $c_i \in G$ such that $\phi(g_i) = c_i g_i c_i^{-1}$. Clearly there is no $c \in G$ such that $c g_i c^{-1} = c_i g_i c_i^{-1}$ for all $i = 1,\dots,n$ because otherwise the automorphism $\phi$ would be inner. Equivalently, $\phi$ is not inner if and only if 
	\begin{equation}
		\label{condition}
		c_1 C_G(g_1) \cap \dots \cap c_n C_G(g_n) = \emptyset \mbox{ in }G.
	\end{equation}
	Set $\overline{G} = G^n$, where $G^n$ is the $n$-fold direct product of $G$, and let $\overline{D} = \{(g,\dots,g) \mid g \in G\} \leq \overline{G}$ be the diagonal subgroup of $\overline{G}$. Clearly, the condition (\ref{condition}) holds if and only if $\overline{c} \not\in C_{\overline{G}}(\overline{g})\overline{D}$ in $\overline{G}$, where $\overline{g} = (g_1, \dots, g_n)\in \overline{G}$ and $\overline{c} = (c_1, \dots, c_n) \in \overline{G}$. Note that $\overline{G}$ is a virtually polycyclic group and thus it is double coset separable.  Every subgroup of a virtually polycyclic subgroup is virtually polycyclic and thus it finitely generated. Hence $C_{\overline{G}}(\overline{g}) \leq \overline{G}$ is finitely generated. By double coset separability of $\overline{G}$ we see that there is $N \normleq \overline{G}$ such that $|\overline{G}:N| < \infty$ and $\overline{c}N \cap  C_{\overline{G}}(\overline{g})\overline{D} = \emptyset$. Let $\iota_j: G \to \overline{G}$ be the injection of $G$ onto the $j$-th coordinate group of $\overline{G}$ for $j = 1, \dots, n$ and set 
	\begin{displaymath}
		K = \iota_1^{-1}\left(\iota_1(G)\cap N\right) \cap \dots \cap \iota_n^{-1}\left(\iota_n(G)\cap N\right) \leq G.
	\end{displaymath}
	Let $\overline{K} =  K^n \leq \overline{G}$ be the $n$-fold direct product of $K$. Note that $K \normfileq G$, $\overline{K} \normfileq \overline{G}$ and $\overline{K} \leq N$ thus $\overline{c}\overline{K} \cap C_{\overline{G}}(\overline{g}) \overline{D} = \emptyset$. 
This is equivalent to  
	\begin{displaymath}
		c_1C_G(g_1)K \cap \dots \cap c_n C_G(g_n)K = \emptyset \mbox{ in }G.
	\end{displaymath}
	Virtually polycyclic groups are hereditarily conjugacy separable and thus, by Theorem \ref{hcs and cc}, they satisfy CC. We see that for every $i \in \{1,\dots, n\}$ there is $L_i \normfileq G$ such that $L_i \leq K$ and
	\begin{displaymath}
		C_{G/L_i}(\psi_i(g_i)) \subseteq \psi_i(C_G(g_i)K) \mbox{ in } G/L_i,
	\end{displaymath}
where $\psi_i \colon G \to G/L_i$ is the natural projection. As $G$ is finitely generated, using the same argument as in (i), we may without loss of generality assume that $L_i$ is actually characteristic in $G$. Set $L = L_1 \cap \dots \cap L_n$. Clearly, $L$ is an intersection of finitely many characteristic subgroups of $G$ and therefore $L$ itself is characteristic in $G$. Note that $L \leq K$ and for every $i \in \{1, \dots, n\}$ we have
\begin{displaymath}
		C_{G/L}(\psi(g_i)) \subseteq \psi(C_G(g_i)K) \mbox{ in } G/L,
\end{displaymath}
	where $\psi \colon G \to G/L$ is the natural projection. We see that
	\begin{displaymath}
		\psi(c_1)C_{G/L}(\psi(g_1)) \cap \dots \cap \psi(c_n)C_{G/L}(\psi(g_n)) \subseteq \psi(c_1 C_{G}(g_1)K) \cap \dots \cap\psi(c_nC_G(g_n)K) \mbox{ in }G/L.
	\end{displaymath}
	Suppose that there is some $c \in G$ such that 
	\begin{displaymath}
		cL \in \psi(c_1 C_{G}(g_1)K) \cap \dots \cap\psi(c_nC_G(g_n)K) \mbox{ in }G/L.
	\end{displaymath}
	This means that
	\begin{displaymath}
		c \in \psi^{-1}\left(\psi(c_1 C_G(g_1)K) \cap \dots \cap \psi(c_n C_G(g_n)K)\right)	\subseteq c_1 C_G(g_1)K \cap \dots \cap c_n C_G(g_n)K = \emptyset
	\end{displaymath}
	which is a contradiction. Therefore
	\begin{displaymath}
		\psi(c_1)C_{G/L}(\psi(g_1)) \cap \dots \cap \psi(c_n)C_{G/L}(\psi(g_n)) = \emptyset \mbox{ in }G/L.
	\end{displaymath}
	It follows that for the induced homomorphism $\tilde{\psi} \colon \Aut(G) \to \Aut(G/L)$ we have $\tilde{\psi}(\phi) \not\in \Inn(G/L)$. We see that $G$ is IAS. 
\end{proof}

%%%%%%%%%%%%%%%%%%%%%%%%%%%%%%%%%%%%%%%%%%%%%%%%%%%%%%%%%%%%%%%%%%%%%%%%%%%%%%%%%%%%%%%%%%%%%%%%%%%%%%%%%%%%%%%%%%%%
%%%%%%%%%%%%%%%%%%%%%%%%%%%%%%%%%%%%%%%%%%%%%%%%%%%%%%%%%%%%%%%%%%%%%%%%%%%%%%%%%%%%%%%%%%%%%%%%%%%%%%%%%%%%%%%%%%%%
\section{Properties of graph products of groups}\label{graph products}
%%%%%%%%%%%%%%%%%%%%%%%%%%%%%%%%%%%%%%%%%%%%%%%%%%%%%%%%%%%%%%%%%%%%%%%%%%%%%%%%%%%%%%%%%%%%%%%%%%%%%%%%%%%%%%%%%%%%

In this section we will recall some basic theory of graph products that was introduced in \cite{green} by Green and theory of cyclically reduced 
elements leading to conjugacy criterion for graph products of groups introduced in \cite{mf}.

Let $G = \Gamma \mathcal{G}$ be a graph product. Every $g \in G$ can be obtained as a product $g = g_1 \dots g_n$, where $g_i \in G_{v_i}$ for some $v_i \in V\Gamma$. However, this is not given uniquely. We say that a finite sequence $W \equiv (g_1, \dots, g_n)$ is a \emph{word} in $\Gamma \mathcal{G}$ if $g_i \in G_{v_i}$ for some $v_i \in V\Gamma$ for $i = 1, \dots, n$. We say that $g_i$ is a \emph{syllable} of $W$ and that the number $n$ is the \emph{length} of $W$. We say that the word $W$ \emph{represents} $g \in G$ if $g = g_1 \dots g_n$. We can define the following three types of transformations on the word $W$:
\begin{itemize}
\item[(T1)] remove a syllable $g_i$ if $g_i = 1$,
\item[(T2)] remove two consecutive syllables $g_i, g_{i+1}$ belonging to the same vertex group and replace them by a single syllable $g_i g_{i+1}$,
\item[(T3)] interchange consecutive syllables $g_i \in G_u$ and $g_{i+1} \in G_v$ if $\{u,v\}\in E\Gamma$. 
\end{itemize}
Transformations of type (T3) are called \emph{syllable shuffling}. Note that the transformations of types (T1) and (T2) reduce the length  of $W$ by 1, whereas (T3) preserves it. We say that word $W$ is \emph{reduced} if it is of minimal length, i.e. no sequence of transformations (T1) - (T3) will produce a word of shorter length. Obviously, if we start with a word $W$ representing an element $g \in G$ then by applying finitely many of the above transformations we will rewrite $W$ to a reduced word $W'$ that represents the same element $g$. The following theorem was proved by Green \cite[Theorem 3.9]{green} in her Ph.D. thesis.
\begin{theorem}[The normal form theorem]
    \label{normal form theorem for graph products}
    Every element $g \in \Gamma \mathcal{G}$ can be represented by a reduced word. Moreover, if two reduced words represent the same element of the group, then one can be obtained from the other by applying a finite sequence of syllable shuffling. In particular, the length of a reduced word is minimal among all words representing g, and a reduced word represents the identity if and only if it is the empty word.
\end{theorem}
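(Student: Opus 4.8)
The plan is to reduce the entire statement to the single assertion that two reduced words representing the same element are related by syllable shuffling, and to prove that assertion by building a permutation action of $\Gamma\mathcal{G}$ on the set of shuffle-equivalence classes of reduced words. The easy parts come first. Existence of a reduced representative is immediate: starting from any word for $g$ and applying (T1)--(T3), moves (T1) and (T2) strictly decrease the length while (T3) preserves it, so since the length is a non-negative integer the process terminates at a reduced word representing $g$. The two ``in particular'' claims then follow \emph{formally} from uniqueness: reducing an arbitrary word $W'$ for $g$ to a reduced word $W''$ gives $|W''| \le |W'|$, and since all reduced words for $g$ are shuffle-equivalent they share a common length, so the reduced length is minimal; and as the empty word is reduced and represents $1$, any reduced word for $1$ is shuffle-equivalent to, hence equal to, the empty word.

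For the core statement, let $\mathcal{W}$ be the set of shuffle-equivalence classes of reduced words, write $[W]$ for a class and $\approx$ for shuffle-equivalence. For each vertex $v$ and each $x \in G_v$ I would define a map $\lambda_x \colon \mathcal{W} \to \mathcal{W}$ as follows. Say that $G_v$ \emph{can be pulled to the front of} a reduced word $W$ if $W \approx (y, W_1)$ for some syllable $y \in G_v$; concretely this occurs precisely when $W$ has a $G_v$-syllable all of whose predecessors lie in groups $G_u$ with $\{u,v\} \in E\Gamma$. If $G_v$ cannot be pulled to the front, set $\lambda_x(W) = (x, W)$; if it can, with $W \approx (y, W_1)$, set $\lambda_x(W) = (xy, W_1)$ when $xy \neq 1$ and $\lambda_x(W) = W_1$ when $xy = 1$; finally set $\lambda_1 = \id$.

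The technical heart --- and the main obstacle --- is verifying three properties of this construction. First, \emph{well-definedness}: the class $[\lambda_x(W)]$ depends only on $[W]$, the point being that the head syllable $y$ and the tail class $[W_1]$ are invariants of the shuffle class of $W$; here one uses that a $G_v$-syllable can never be shuffled past another $G_v$-syllable (there is no loop at $v$), so at most one $G_v$-syllable is pullable to the front and it is the same element for every representative. Second, the \emph{vertex action}: the assignment $x \mapsto \lambda_x$ is a homomorphism $G_v \to \mathrm{Sym}(\mathcal{W})$, verified by running through the cases ``can/cannot pull to the front'' and ``$xy = 1$ or not''. Third, \emph{commutation}: whenever $\{u,w\} \in E\Gamma$, the permutations $\lambda_x$ (for $x \in G_u$) and $\lambda_{x'}$ (for $x' \in G_w$) commute, by a case analysis using that adjacency of $u$ and $w$ lets the corresponding head syllables be shuffled past one another. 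Granting these, the universal property of $\Gamma\mathcal{G}$ as the free product of the $G_v$ modulo exactly the commutation relations yields a single action $\Lambda \colon \Gamma\mathcal{G} \to \mathrm{Sym}(\mathcal{W})$.

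To finish, I would evaluate this action on the empty class. If $W = (g_1, \dots, g_n)$ is reduced and represents $g$, then building $W$ up one syllable at a time from the right shows $\Lambda(g)([\emptyset]) = [W]$, because at each stage $\lambda_{g_i}$ applied to $[(g_{i+1}, \dots, g_n)]$ merely prepends $g_i$: reducedness guarantees that no cancellation or absorption occurs. Consequently, if $W$ and $W'$ are reduced words representing the same element $g$, then $[W] = \Lambda(g)([\emptyset]) = [W']$, so $W \approx W'$, which is exactly the uniqueness statement. I expect the well-definedness and commutation checks to demand the most care, since both rest on a precise description of which syllables may legitimately be shuffled to the front of a reduced word.
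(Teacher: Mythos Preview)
The paper does not supply its own proof of this theorem; it is quoted from Green's thesis \cite[Theorem 3.9]{green} and used as a black box throughout Section~\ref{graph products}. So there is no in-paper argument to compare against.

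Your proposal is the standard van der Waerden permutation-representation trick, and the outline is correct. The one place that genuinely needs unpacking is the well-definedness of $[W_1]$ when $W \approx (y, W_1)$: you must show that if $(y, W_1) \approx (y, W_1')$ with the same head syllable $y \in G_v$, then already $W_1 \approx W_1'$. Your remark that ``at most one $G_v$-syllable is pullable to the front'' handles uniqueness of $y$ but not of the tail class. The cleanest way to close this gap is to view a shuffle-class of reduced words as a labelled partial order (a \emph{heap} or \emph{trace}): declare syllable $g_i$ to precede $g_j$ when $i<j$ and their vertices are non-adjacent (or equal), and take the transitive closure. The words in a shuffle class are then exactly the linear extensions of this poset; $y$ is its unique minimal $G_v$-labelled element; deleting $y$ gives a well-defined sub-poset whose linear extensions form $[W_1]$. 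With this in hand your vertex-action and commutation checks become routine case analyses, and the evaluation $\Lambda(g)([\emptyset]) = [W]$ goes through exactly as you describe.
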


Thanks to Theorem \ref{normal form theorem for graph products} the following definitions make sense. Let $g$ be an arbitrary element of $G$ and let $W \equiv (g_1, \dots, g_n)$ be a reduced word representing $g$ in $G$. We use $|g|=n$ to denote the \emph{length} of $g$ and we define the \emph{support} of $g$ to be 
\begin{displaymath}
	\supp(g) = \{v \in V\Gamma \mid \exists i \in \{1,\dots, n\} \mbox{ such that } g_i \in G_v\}.
\end{displaymath}
We define $\FL(g) \subseteq V\Gamma$ as the set of all $v \in V\Gamma$ such that there is a reduced word $W$ that represents the element $g$ and starts with a syllable from $G_v$. Similarly we define $\LL(g) \subseteq V\Gamma$ as the set of all $v \in V\Gamma$ such that there is a reduced word $W$ that represents the element $g$ and ends with a syllable from $G_v$. Note that $\FL(g) = \LL(g^{-1})$.

Every subset of vertices $X \subseteq V\Gamma$ induces a full subgraph $\Gamma_X$ of the graph $\Gamma$. Let $G_X$ be the subgroup of $G$ generated by the vertex groups corresponding to the vertices contained in $X$. Subgroups of $G$ that can be obtained in such way are called \emph{full subgroups} of $G$; according to standard convention, $G_{\emptyset} = \{1\}$. Using the normal form theorem one can easily show that $G_X$ is naturally isomorphic to the graph product of the family $\mathcal{G}_X = \{G_v \mid v \in X\}$ with respect to the full subgraph $\Gamma_X$. It is also easy to see that there is a canonical retraction $\rho_X \colon G \to G_X$ defined on the standard generators of $G$ as follows:
\begin{displaymath}
	\rho_X(g) = \begin{cases}
					g &\mbox{ if }g \in G_v \mbox{ for some } v \in X,\\
					1 &\mbox{ otherwise.} 
				\end{cases}
\end{displaymath}
We will often abuse the notation and sometimes consider the retraction $\rho_X$ as a surjective homomorphism $\rho_X \colon G \to G_X$ and sometimes as an endomorphism $\rho_X \colon G \to G$. In that case writing $\rho_{X} \circ \rho_{Y}$, where $Y \subseteq V\Gamma$, makes sense.

Let $A,B \subseteq V\Gamma$ be arbitrary. Let $G_A, G_B \leq G$ be the corresponding full subgroups of $G$ and let $\rho_A, \rho_B$ be the corresponding retractions. One can easily check that $\rho_A$ and $\rho_B$ commute: $\rho_A \circ \rho_B = \rho_B \circ \rho_A$. It follows that $G_A \cap G_B = G_{A \cap B}$ and $\rho_A \circ \rho_B = \rho_{A \cap B}$.

For a vertex $v \in V\Gamma$ we will use $\link(v)$ to denote $\{u \in V\Gamma \mid \{u,v\}\in E\Gamma\}$, the set of vertices adjacent to $v$, and we will use $\star(v)$ to denote $\link(v) \cup \{v\}$. If $S \subseteq V\Gamma$ then $\link(S) = \cap_{v \in S}\link(v)$ and $\star(S) = \cap_{v \in S}\star(v)$. 

We will use $N_G(H)$ to denote the normaliser of a subgroup $H$ in a group $G$. The following remark is a special case of \cite[Proposition 3.13]{yago}.
\begin{remark}
	\label{normaliser}
	Let $v \in V\Gamma$. Then $N_G(G_v) = G_{\star(v)} = G_v G_{\link(v)} \simeq G_v \times G_{\link(v)}$.	
\end{remark}

For $g \in G$ and $H \leq G$ we will use $C_H(g)$ to denote $\{c \in H \mid cg = gc\}$, the $H$-centraliser of $g$ in $G$. The following remark is a special case of \cite[Lemma 3.7]{mf}.
\begin{remark}
\label{centralisers_in_graph_products}
	Let $v \in V\Gamma$ and let $a \in G_v \setminus \{1\}$ be arbitrary. Then $C_G(a) = C_{G_v}(a)G_{\link(A)} \simeq C_{G_v}(a) \times G_{\link(A)}$.
\end{remark}

	Let $G = \Gamma \mathcal{G}$ be a graph product and let $g_1, \dots, g_n \in G$ be arbitrary. We say that the element $g = g_1 \dots g_n$ is a \emph{reduced product} of $g_1, \dots, g_n$ if $|g| = |g_1|+ \dots + |g_n|$.

	Let $g \in G$. We define $\S(g) = \supp(g) \cap \star(\supp(g))$. We also define $\P(g) = \supp(g) \setminus \S(g)$. Obviously $g$ uniquely factorises as a reduced product $g = \p(g) \s(g)$ where $\supp(\p(g)) =\P(g)$ and $\supp(\s(g)) = \S(g)$ . We call this factorisation the \emph{P-S decomposition of $g$}.

Let $g \in G$, let $W \equiv (g_1, \dots ,g_n)$ be a reduced expression for $g$. We say that a sequence $W' = (g_{j+1}, \dots, g_n, g_{1}, \dots, g_j)$, where $j \in \{1,\dots, n-1\}$, is a \emph{cyclic permutation} of $W$. We say that the element $g'\in G$ is a \emph{cyclic permutation} of $g$ if $g'$ can be expressed by a cyclic permutation of some reduced expression for $g$.
 
Let $W \equiv (g_1, \dots, g_n)$ be some reduced expression in $G$. We say that $W$ is \emph{cyclically reduced} if all cyclic permutations of $W$ are reduced. The following lemma was proved in \cite[Lemma 3.8]{mf}.
\begin{lemma}
	Let $g \in G$ be arbitrary and let $W \equiv (g_1, \dots g_n)$ be some reduced expression for $g$. If $W$ is cyclically reduced then all reduced expressions representing $g$ are cyclically reduced.
\end{lemma}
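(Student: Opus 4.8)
The plan is to reduce cyclic reducedness to an intrinsic property of the \emph{element} $g$, so that it cannot depend on the chosen reduced expression. Concretely, I would prove the characterisation that a reduced word $W$ representing $g$ is cyclically reduced if and only if $\FL(g) \cap \LL(g) \subseteq \S(g)$. The sets $\FL(g)$, $\LL(g)$ and $\S(g)$ are defined purely in terms of the existence of reduced words representing $g$, hence they are invariants of $g$ and make no reference to any particular expression. Granting the characterisation the lemma is immediate: if $W$ is cyclically reduced then $\FL(g) \cap \LL(g) \subseteq \S(g)$, and since this condition does not mention $W$, \emph{every} reduced expression $W'$ for $g$ satisfies it and is therefore cyclically reduced as well.

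First I would settle the easy observation that vertices in $\S(g)$ never obstruct cyclic reducedness. If $u \in \S(g)$ then $G_u$ commutes with every vertex group occurring in $\supp(g)$, so all $G_u$-syllables of $W$ can be gathered together by transformations of type (T3); as $W$ is reduced, there is in fact exactly one such syllable. A unique $G_u$-syllable can never meet a second one under cyclic permutation, so it cannot trigger a reduction. This is precisely the role of the P--S decomposition: the factor $\s(g)$ isolates the (possibly torsion) syllables that commute with all of $\supp(g)$ and which cause the naive criterion $|g^2| = 2|g|$ to fail.

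Next I would establish the two directions of the characterisation via the normal form theorem. For the contrapositive of one direction, suppose $W$ is not cyclically reduced. Then some cyclic permutation of $W$ fails to be reduced, so after moving a prefix to the end and applying syllable shuffling two syllables lying in a common vertex group $G_v$ become consecutive and may be combined. This exhibits one reduced expression for $g$ beginning with a $G_v$-syllable and another ending with a $G_v$-syllable, whence $v \in \FL(g) \cap \LL(g)$; moreover these must be two \emph{distinct} syllables, since a single $G_v$-syllable reachable to both ends would commute with all of $\supp(g)$ and force $v \in \S(g)$, so by the previous paragraph $v \notin \S(g)$, giving $\FL(g) \cap \LL(g) \not\subseteq \S(g)$. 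Conversely, if $v \in \FL(g) \cap \LL(g)$ and $v \notin \S(g)$, then $g$ admits reduced expressions starting and ending with $G_v$-syllables, and since $v \notin \S(g)$ these cannot be the same syllable; assembling a single reduced expression with $G_v$-syllables at both ends and performing one rotation makes them adjacent, producing a non-reduced cyclic permutation, so $W$ is not cyclically reduced.

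The main obstacle is the bookkeeping in the last paragraph, namely the repeated use of the normal form theorem to (a) guarantee that the combinable pair of syllables produced by a bad cyclic permutation genuinely witnesses membership in $\FL(g)$ and $\LL(g)$, and (b) simultaneously drive two distinct $G_v$-syllables reachable to opposite ends to the two ends of a \emph{single} reduced expression. Both steps amount to controlling how syllable shuffling interacts with the two ends of a word, and the commuting syllables collected in $\S(g)$ are exactly the degenerate case that must be separated out before this analysis can run cleanly.
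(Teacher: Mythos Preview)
The paper does not actually supply a proof of this lemma: it is quoted verbatim from \cite[Lemma~3.8]{mf} and immediately followed by the characterisation lemma \cite[Lemma~3.11]{mf} (Lemma~\ref{graph_product_cyclically_reduced_criterion} here), also without proof. Your plan is precisely to establish the equivalence $(\FL(g)\cap\LL(g))\setminus\S(g)=\emptyset \Leftrightarrow g$ cyclically reduced, which is item~(ii) of that characterisation, and then read off the present lemma as a corollary since the left-hand side is an invariant of $g$. This is the natural approach and is almost certainly how the two lemmas are proved together in \cite{mf}.

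Your outline is sound; let me just comment on the point you flag as the main obstacle, namely step~(b): producing a \emph{single} reduced expression with $G_v$-syllables at both ends from the hypotheses $v\in\FL(g)$, $v\in\LL(g)$, $v\notin\S(g)$. The cleanest way to handle this is to use the partial order on the syllables of $g$ coming from the trace-monoid (heap) structure: two syllables are comparable iff they do not commute, and reduced expressions are exactly the linear extensions of this order. Then $v\in\FL(g)$ gives a minimal $G_v$-syllable $s_1$, $v\in\LL(g)$ gives a maximal $G_v$-syllable $s_2$, and your argument that $v\notin\S(g)$ forces $s_1\neq s_2$ is correct. Any finite poset with a specified minimal element $s_1$ and a distinct specified maximal element $s_2$ admits a linear extension with $s_1$ first and $s_2$ last, which is the single reduced expression you want. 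Framing it this way turns the ``bookkeeping'' into a one-line poset fact and also makes step~(a) transparent.
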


	Let $g \in G$ be arbitrary. We say that $g$ is \emph{cyclically reduced} if either $g$ is trivial or some reduced word representing $g$ is cyclically reduced. The following characterisation of cyclically reduced elements was given in \cite[Lemma 3.11]{mf}
\begin{lemma}
	\label{graph_product_cyclically_reduced_criterion}
	Let $g \in G$. Then the following are equivalent:
	\begin{enumerate}
		\item[(i)]	$g$ is cyclically reduced,
		\item[(ii)]	$(\FL(g)\cap \LL(g)) \setminus \S(g) = \emptyset$,
		\item[(iii)]$\FL(\p(g)) \cap \LL(\p(g)) = \emptyset$,
		\item[(iv)] $\p(g)$ is cyclically reduced.
	\end{enumerate}
\end{lemma}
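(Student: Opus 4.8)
The plan is to reduce everything to the $\p$-part of $g$ via the P--S decomposition and to treat the central factor $\s(g)$ as inert. The starting point is the combinatorial observation that every vertex of $\S(g)$ is adjacent to every other vertex of $\supp(g)$ and to every other vertex of $\S(g)$; hence $G_{\S(g)}$ is the internal direct product of its vertex groups and commutes with $G_{\P(g)}$, so $g=\p(g)\s(g)=\s(g)\p(g)$ with $\s(g)$ ranging over a commuting direct factor. First I would record two consequences. (a) $\S(\p(g))=\emptyset$: if $v\in\P(g)\cap\star(\P(g))$ then, since $v\notin\S(g)$, there is $w\in\supp(g)\setminus\{v\}$ with $\{v,w\}\notin E\Gamma$; but $w\in\P(g)$ contradicts $v\in\star(\P(g))$, while $w\in\S(g)$ forces $w$ adjacent to $v$, again a contradiction. (b) Because the syllables of $\s(g)$ commute with all syllables of $\p(g)$ and can be shuffled to either end, $\FL(\s(g))=\LL(\s(g))=\S(g)$, and every reduced word for $g$ is obtained by shuffling a reduced word for $\p(g)$ with one for $\s(g)$; consequently $\FL(g)=\FL(\p(g))\cup\S(g)$ and $\LL(g)=\LL(\p(g))\cup\S(g)$, with $\FL(\p(g)),\LL(\p(g))\subseteq\P(g)$ disjoint from $\S(g)$.

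Step (b) gives (ii)$\Leftrightarrow$(iii) immediately: intersecting the two unions and using $\P(g)\cap\S(g)=\emptyset$ yields $\FL(g)\cap\LL(g)=(\FL(\p(g))\cap\LL(\p(g)))\cup\S(g)$, so $(\FL(g)\cap\LL(g))\setminus\S(g)=\FL(\p(g))\cap\LL(\p(g))$, and the right-hand set is empty exactly when (iii) holds.

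The heart of the argument is (i)$\Leftrightarrow$(ii), which I would prove for an arbitrary element by contraposition in both directions. For (ii)$\Rightarrow$(i): if $g$ is not cyclically reduced then some cyclic permutation of a reduced word fails to be reduced, so across the seam joining the last and first syllables two syllables from a common vertex group $G_v$ become shuffleable-adjacent; one of them is then a last letter of $g$ and the other a first letter, giving $v\in\FL(g)\cap\LL(g)$, and since a reduction needs at least two $G_v$-syllables we must have $v\notin\S(g)$ (a vertex of $\S(g)$ admits only one syllable in a reduced word), whence $v\in(\FL(g)\cap\LL(g))\setminus\S(g)$. For (i)$\Rightarrow$(ii): assuming $v\in(\FL(g)\cap\LL(g))\setminus\S(g)$, I would construct a single reduced word for $g$ whose first and last syllables both lie in $G_v$; one cyclic permutation then brings these two syllables together and (T2) shortens the word, so $g$ is not cyclically reduced. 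To build this word, pull out a front $G_v$-syllable, $g=a\,h_1$ reduced with $a\in G_v\setminus\{1\}$; using $v\in\LL(g)$ and $v\notin\S(g)$ one shows $v\in\LL(h_1)$ --- otherwise the only way to realise $v$ as a last letter of $g$ is to shuffle $a$ past all of $h_1$, forcing $\supp(h_1)\subseteq\link(v)$ and hence $v\in\S(g)$ --- so $h_1=h_1'c$ reduced with $c\in G_v\setminus\{1\}$ and $g=a\,h_1'c$ is the desired word (note $\supp(g)\neq\{v\}$ since $v\notin\S(g)$, so $a$ and $c$ are genuinely distinct syllables).

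Finally (iii)$\Leftrightarrow$(iv) is just (i)$\Leftrightarrow$(ii) applied to $h=\p(g)$: by (a) we have $\S(\p(g))=\emptyset$, so the condition $(\FL(\p(g))\cap\LL(\p(g)))\setminus\S(\p(g))=\emptyset$ of (ii) reads $\FL(\p(g))\cap\LL(\p(g))=\emptyset$, i.e. (iii), while (i) for $\p(g)$ is exactly (iv). I expect the main obstacle to be the word-construction in the (i)$\Rightarrow$(ii) direction: extracting a single reduced representative that begins and ends in $G_v$ from the separate hypotheses $v\in\FL(g)$ and $v\in\LL(g)$ requires careful bookkeeping with the syllable-shuffling part of the normal form theorem, and it is precisely here that the hypothesis $v\notin\S(g)$ is used to rule out the degenerate case where the only available $G_v$-syllable is central and cannot be duplicated at both ends.
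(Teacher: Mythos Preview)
The paper does not supply its own proof of this lemma; it is quoted from \cite[Lemma~3.11]{mf}, so there is no in-paper argument to compare yours against.

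Your proof is correct and proceeds along the natural line via the P--S decomposition. Observations (a) and (b) are right, the derivation of (ii)$\Leftrightarrow$(iii) from them is clean, and the bootstrap (iii)$\Leftrightarrow$(iv) by applying (i)$\Leftrightarrow$(ii) to $\p(g)$ using $\S(\p(g))=\emptyset$ is exactly the intended reduction.

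The only place that would benefit from one more sentence is the $\neg$(i)$\Rightarrow\neg$(ii) direction. You assert that the two mergeable $G_v$-syllables in the non-reduced cyclic permutation furnish $v\in\FL(g)\cap\LL(g)$; to make this precise: if $W=(g_1,\dots,g_n)$ is reduced and the cyclic permutation $(g_{j+1},\dots,g_n,g_1,\dots,g_j)$ admits a (T2)-reduction after shuffling, then since each block is itself reduced the two $G_v$-syllables involved must be some $g_k$ with $k>j$ and some $g_i$ with $i\le j$. For these to be brought together every syllable between them in the cyclic permutation --- namely $g_{k+1},\dots,g_n,g_1,\dots,g_{i-1}$ --- must lie in a vertex group adjacent to $v$. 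Hence in the original word $W$ one may shuffle $g_k$ to the last position and $g_i$ to the first, giving $v\in\LL(g)$ and $v\in\FL(g)$ explicitly. Your reason for $v\notin\S(g)$ (a vertex of $\S(g)$ can contribute only one syllable to a reduced word, since any two such syllables could be merged through the intervening, necessarily $v$-adjacent, syllables) is the right one. The construction in $\neg$(ii)$\Rightarrow\neg$(i) is handled correctly, including the degenerate-case analysis ruling out $h_1'=1$.
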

One of the consequences of Lemma \ref{graph_product_cyclically_reduced_criterion} is the fact that for every $g \in G$ there is $g_0 \in G$ such that $g \sim_G g_0$ and $g_0$ is cyclically reduced. We will use this fact often without mentioning.

Conjugacy criterion for graph products of groups was proved in \cite[Lemma 3.12]{mf}.
\begin{lemma}[Conjugacy criterion for graph products]
	\label{conjugacy criterion for graph products}
	Let $x, y$ be cyclically reduced elements of $G = \Gamma \mathcal{G}$. Then $x \sim_G y$ if and only if the all of the following are true:
	\begin{enumerate}
		\item[(i)] $|x| = |y|$ and $\supp(x) = \supp(y)$,
		\item[(ii)] $\p(x)$ is a cyclic permutation of $\p(y)$,
		\item[(iii)] $\s(y) \in \s(x)^{G_{\S(x)}}$.
	\end{enumerate}
\end{lemma}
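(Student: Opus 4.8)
I would prove the two directions separately, putting essentially all of the work into the forward (\emph{only if}) implication.

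For the \emph{if} direction, assume (i)--(iii). Since $\supp(x)=\supp(y)$, the defining formula $\S(g)=\supp(g)\cap\star(\supp(g))$ gives $\S(x)=\S(y)$ and hence $\P(x)=\P(y)$; write $Z=\S(x)$ and $P=\P(x)$. By definition of $\S$, every vertex of $Z$ is adjacent to every vertex of $\supp(x)\supseteq P$, so $G_Z$ commutes elementwise with $G_P$; in particular $\s(g)$ commutes with $\p(g)$. Condition (ii) says $\p(y)$ is a cyclic permutation of $\p(x)$, i.e. $\p(y)=c^{-1}\p(x)c$ where $c=a_1\cdots a_j$ is an initial segment of a reduced word $a_1\cdots a_n$ for $\p(x)$, so $c\in G_P$. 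Condition (iii) gives $d\in G_Z$ with $\s(y)=d^{-1}\s(x)d$. Conjugating $x=\p(x)\s(x)$ first by $c$ and then by $d$, and using that $c\in G_P$ commutes with $\s(x)\in G_Z$ while $d\in G_Z$ commutes with $\p(y)\in G_P$, I obtain $d^{-1}c^{-1}xcd=\p(y)\s(y)=y$, so $x\sim_G y$.

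For the \emph{only if} direction, suppose $x\sim_G y$ with both cyclically reduced. I would first prove the length equality $|x|=|y|$, deriving from the normal form theorem together with Lemma \ref{graph_product_cyclically_reduced_criterion} the fact that a cyclically reduced element has minimal length in its conjugacy class: conjugating a cyclically reduced $g$ by a single syllable $a\in G_v$ never shortens it (it either preserves the length, when the move amounts to a cyclic permutation, or strictly lengthens it). Since $x$ and $y$ are both cyclically reduced and conjugate, both realise this minimum, so $|x|=|y|$. The equality $\supp(x)=\supp(y)$ I would extract as a by-product of the peeling argument below, since each elementary move used there visibly preserves the support. The heart of the proof is this peeling lemma: if $g$ is cyclically reduced and $h\in G$ is such that $h^{-1}gh$ is again cyclically reduced with $|h^{-1}gh|=|g|$, then $h^{-1}gh$ is obtained from $g$ by a finite sequence of moves, each of which is either (a) a cyclic permutation, realised by conjugating by a single syllable $a\in G_v$ with $v\in\P(g)$, which fixes $\s(g)$ and cyclically permutes $\p(g)$, or (b) a conjugation by a single syllable $a\in G_v$ with $v\in\S(g)$, which fixes $\p(g)$ and replaces $\s(g)$ by $a^{-1}\s(g)a$. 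I would prove this by induction on $|h|$, writing $h=a_1\cdots a_m$ in reduced form and peeling off $a_1$: using the normal form theorem to control the ambiguity of reduced words up to syllable shuffling, together with the $\FL$/$\LL$ characterisation in Lemma \ref{graph_product_cyclically_reduced_criterion}, I would show that conjugation by $a_1$ cannot strictly increase the length (otherwise no later conjugation could restore the length to $|g|$ while keeping the result cyclically reduced, contradicting $|h^{-1}gh|=|g|$), which forces $a_1$ either to realise a cyclic permutation of the $\p$-part or to lie in $G_{\S(g)}$; in either case $a_1^{-1}ga_1$ is cyclically reduced of the same length, and I apply the inductive hypothesis to $a_2\cdots a_m$.

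Granting the peeling lemma, the \emph{only if} direction follows: applying it to a conjugator $h$ with $y=h^{-1}xh$ (a length-preserving conjugation, by the length equality just established), I see that $y$ arises from $x$ by type-(a) and type-(b) moves. Each move preserves $\supp$, giving $\supp(x)=\supp(y)$ and completing (i); the type-(a) moves act only on the $\p$-part and compose to a single cyclic permutation, giving (ii); and since every $a\in G_{\S(x)}$ commutes with $\p(x)\in G_{\P(x)}$, the type-(b) moves act only on the $\s$-part and compose to an element of $G_{\S(x)}$ conjugating $\s(x)$ to $\s(y)$, which is exactly (iii). I expect the peeling lemma to be the main obstacle: the delicate point is the case analysis showing that a single peeled syllable can only cyclically permute or centrally conjugate, and in particular cannot lengthen the element, which requires carefully tracking how the sets $\FL$ and $\LL$ change under conjugation modulo the syllable-shuffling ambiguity of the normal form.
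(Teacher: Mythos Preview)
The paper does not prove this lemma at all: it is imported verbatim from \cite[Lemma 3.12]{mf} and used as a black box, so there is no in-paper argument to compare against. Your outline is plausible and the \emph{if} direction is fine. For the \emph{only if} direction you have correctly located the entire difficulty in the ``peeling lemma'', but as written the lemma has a genuine gap.

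The unjustified step is the monotonicity assertion that if conjugation by the first syllable $a_1$ strictly increases the length, then ``no later conjugation could restore the length to $|g|$''. Reducedness of $h=a_1\cdots a_m$ by itself does not obviously prevent the length sequence $|g|,\,|a_1^{-1}ga_1|,\,\dots,\,|h^{-1}gh|$ from going up and then coming back down; you are implicitly using a convexity statement about the length of conjugates along a reduced word, and that statement is precisely the content you need to prove. A standard repair is to choose $h$ of \emph{minimal} length among all conjugators with $h^{-1}xh=y$: then one can show there is no cancellation between $h^{-1}$ and $x$ or between $x$ and $h$ (else $h$ could be shortened), which makes the first peel honest. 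A second, smaller gap: your dichotomy ``$v\in\P(g)$ or $v\in\S(g)$'' omits the case $v\notin\supp(g)$. If moreover $v\in\link(\supp(g))$ then $a_1$ centralises $g$ and the step is vacuous (neither your type (a) nor (b)); if $v\notin\star(\supp(g))$ then conjugation by $a_1$ adds two syllables and you are back to needing the monotonicity claim. Finally, even within $v\in\P(g)$ you must argue that $a_1$ actually equals an initial (or terminal) syllable of $\p(g)$, not merely lies in the same vertex group; for instance in $A\ast B$ with $g=ab$ and $a_1\in A\setminus\{1,a\}$ one gets $a_1^{-1}ga_1=(a_1^{-1}a)ba_1$ of length $3$, which is neither a cyclic permutation nor cyclically reduced, so the induction cannot continue without first disposing of this case via the (still unproved) monotonicity. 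In short, your plan is the right shape, but the step you flag as ``the main obstacle'' really is one, and the proposal does not yet contain the idea that closes it.
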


%%%%%%%%%%%%%%%%%%%%%%%%%%%%%%%%%%%%%%%%%%%%%%%%%%%%%%%%%%%%%%%%%%%%%%%%%%%%%%%%%%%%%%%%%%%%%%%%%%%%%%%%%%%%%%%%%%%%
\section{Poitwise inner endomorphisms of graph products}\label{pointwise inner automorphisms}
%%%%%%%%%%%%%%%%%%%%%%%%%%%%%%%%%%%%%%%%%%%%%%%%%%%%%%%%%%%%%%%%%%%%%%%%%%%%%%%%%%%%%%%%%%%%%%%%%%%%%%%%%%%%%%%%%%%%

The aim of this section is to prove Theorem \ref{no central vertex implies property A} and Corollary \ref{theorem A}.

We will need the following technical lemma about conjugators of minimal length.
\begin{lemma}
	\label{PI_technical}
	Let $u \in V\Gamma$, $a \in G_u \setminus \{1\}$ and let $\phi \in \End(G)$. Let $v \in V\Gamma$ and $b \in G_v \setminus \{1\}$. Suppose that $\phi(a) \in G_u \setminus\{1\}$, $\phi(b) \in G_v^G \setminus \{1\}$ and $\phi(ab)\sim_G ab$. Pick $b' \in G_v$ and $w \in G$ such that $\phi(b) = wb'w^{-1}$ and $|w|$ is minimal. Then $w \in N_G(G_u) = G_{\star(u)} = G_{\link(u)}G_u$.
\end{lemma}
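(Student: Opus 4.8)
The plan is to dispose of the trivial case $w=1$ immediately, and otherwise to pull out of $w$ its part lying in $G_{\star(u)}=N_G(G_u)$, reduce the whole problem to showing that the remaining factor is trivial, and finish by a pure length count after conjugating $\phi(ab)$ into a cyclically reduced form. Concretely, using the normal form theorem (Theorem \ref{normal form theorem for graph products}) I would write $w$ as a reduced product $w=ed$, where $e\in G_{\star(u)}$ and $d$ is the left-coset representative of $w$ modulo the parabolic $G_{\star(u)}$, characterised by $\FL(d)\cap\star(u)=\emptyset$. Since $N_G(G_u)=G_{\star(u)}=G_u\times G_{\link(u)}$ by Remark \ref{normaliser}, and $\phi(a)\in G_u\setminus\{1\}$, the element $a_1:=e^{-1}\phi(a)e$ again lies in $G_u\setminus\{1\}$. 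Because $\supp(w)\subseteq\star(u)\cup\supp(d)$ while $\FL(d)\cap\star(u)=\emptyset$ forces $\supp(d)\not\subseteq\star(u)$ whenever $d\neq1$, it suffices to prove that $d=1$, for then $w=e\in G_{\star(u)}=N_G(G_u)$.

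The minimality of $|w|$ enters through the standard observation that $\LL(w)\cap\star(v)=\emptyset$: if some reduced expression of $w$ ended in a syllable $s_0\in G_s$ with $s\in\star(v)$, then either $s=v$, so $s_0$ may be absorbed into $b'$, or $s$ is adjacent to $v$, so $s_0$ commutes past $b'$ and cancels with $s_0^{-1}$ on the other side; in both cases we would obtain a shorter conjugator, contradicting minimality. Since $\LL(d)\subseteq\LL(w)$, this gives $\LL(d)\cap\star(v)=\emptyset$, and in particular $v\notin\LL(d)$.

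Next I would pass to the conjugate $g:=w^{-1}\phi(ab)\,w = w^{-1}\phi(a)w\,\phi(b)w^{-1}w=d^{-1}a_1d\,b'$, using $\phi(ab)=\phi(a)\phi(b)$ and $\phi(b)=wb'w^{-1}$. The condition $\FL(d)\cap\star(u)=\emptyset$ means that the $G_u$-syllable $a_1$ neither cancels nor commutes with the adjacent syllables of $d$, and that $u\notin\star(\supp(d))$; hence $d^{-1}a_1d$ is a reduced product of length $2|d|+1$ with $\FL(d^{-1}a_1d)=\LL(d^{-1}a_1d)=\LL(d)$. Appending $b'\in G_v\setminus\{1\}$ then causes no collapse, since $\LL(d)\cap\star(v)=\emptyset$ prevents $b'$ from being absorbed or shuffled inward; thus $g$ is reduced of length $2|d|+2$, with $\FL(g)=\LL(d)$ and $\LL(g)=\{v\}$. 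As $v\notin\LL(d)$ we get $\FL(g)\cap\LL(g)=\emptyset$, so criterion (ii) of Lemma \ref{graph_product_cyclically_reduced_criterion} shows that $g$ is cyclically reduced.

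Finally, $g\sim_G\phi(ab)\sim_G ab$, and $g$ is cyclically reduced, while $ab$ is a product of at most two syllables so any cyclically reduced conjugate $ab_0$ satisfies $|ab_0|\leq|ab|\leq2$. Comparing lengths via the conjugacy criterion (Lemma \ref{conjugacy criterion for graph products}(i)) yields $2|d|+2=|g|=|ab_0|\leq2$, forcing $|d|=0$, i.e.\ $d=1$, and therefore $w=e\in G_{\star(u)}=N_G(G_u)=G_{\link(u)}G_u$, as claimed. I expect the main obstacle to be the bookkeeping in the third paragraph: one must check carefully, through the behaviour of $\FL$ and $\LL$ together with the adjacency conditions $\FL(d)\cap\star(u)=\emptyset$ and $\LL(d)\cap\star(v)=\emptyset$, that no syllable shuffling shortens $d^{-1}a_1d\,b'$, so that its literal length $2|d|+2$ really is its cyclically reduced length. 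A secondary point needing care is justifying the coset decomposition $w=ed$ rigorously from the normal form theorem.
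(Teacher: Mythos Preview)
Your argument is correct and follows essentially the same route as the paper. The paper writes $w=xyz$ with $x\in G_{\star(u)}$, $z\in G_{\star(v)}$ and $\FL(y)\cap\star(u)=\emptyset=\LL(y)\cap\star(v)$, first disposes of $z$ by minimality, and then examines the conjugate $s=(x^{-1}\phi(a)x)\,y\,b'\,y^{-1}$; you instead use only the left-coset decomposition $w=ed$, absorb the role of $z$ into the observation $\LL(w)\cap\star(v)=\emptyset$, and examine the conjugate $g=d^{-1}a_1d\,b'$, which is just $s$ conjugated by $y$ (with $y=d$). The length count and the application of Lemmas~\ref{graph_product_cyclically_reduced_criterion} and~\ref{conjugacy criterion for graph products} are identical in spirit.
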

\begin{proof}
	By assumption $\phi(a) = a'$, for some $a' \in G_u \setminus \{1\}$. Clearly $w$ can be factorised as reduced product $w = xyz$, where $x \in N_G(G_u)$, $z \in N_G(G_v)$ and $\LL(y) \cap \star(v) = \emptyset = \FL(y) \cap \star(u)$. First we show that $z = 1$. Assume $z \in N_G(G_v) \setminus \{1\}$. Then we can set $b'' = zb'z^{-1}$ and $w' = xy$. Clearly, $\phi(b) = w'b''w'^{-1}$ and $|w'| < |w|$ which is a contradiction with our choice of $b'$ and $w$. We see that $z = 1$.
	
	Now we show that $y = 1$. Obviously 
	\begin{displaymath}
		ab \sim_G \phi(ab) = a' xy b' y^{-1} x^{-1} \sim_G (x^{-1}a'x) y b' y^{-1}.
	\end{displaymath}
	Denote $s = (x^{-1}a'x) y b' y^{-1}$.
	
	Recall that $x^{-1}a'x \in G_u \setminus \{1\}$ and $b' \in G_v \setminus \{1\}$. Suppose that $y \neq 1$. Since $\LL(y) \cap \star(v) = \emptyset = \FL(y) \cap \star(u)$, we see that $s$ is a reduced product of four non-trivial elements of $G$ and thus $|s| \geq 4$. 
	
	Note that $\FL(s) = \{u\}$ and $\LL(s) = \LL(y^{-1}) = \FL(y)$ and thus $s$ is cyclically reduced by Lemma \ref{graph_product_cyclically_reduced_criterion} as $\FL(s) \cap \LL(s) = \emptyset$. Clearly $|s| > 2$, but also $s \sim_G ab$ and $|ab| \leq 2$ which is a contradiction with Lemma \ref{conjugacy criterion for graph products}, thus $y = 1$ and consequently $w \in N_G(G_u)$. By Remark \ref{normaliser} we see that $N_G(G_u) = G_{\star(u)} = G_u G_{\link(u)}$. 
\end{proof}

\begin{corollary}
	\label{PI_stable}
	Let $\phi \in \End(G)$ be such that $\phi(g) \sim_G g$ for every $g \in G$ with $|g| = 1$. Suppose that there is $v \in V\Gamma$ and $a \in G_v \setminus \{1\}$ such that $\phi(a) \in G_v$. Then $\phi(G_v) \subseteq G_v$.
\end{corollary}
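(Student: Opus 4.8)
The plan is to fix an arbitrary $b \in G_v \setminus \{1\}$ and show that $\phi(b) \in G_v$; since $\phi(1) = 1 \in G_v$, this is all that is needed. The whole argument hinges on invoking Lemma \ref{PI_technical} with \emph{both} vertices taken to be $v$ (that is, with $u = v$), and then converting its conclusion into membership in $G_v$ via the commuting relations of the graph product. So first I would verify the hypotheses of that lemma, and then extract the desired conclusion.

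First I would dispose of the degenerate case in which the lemma does not apply directly. If $ab = 1$ then $b = a^{-1}$, and since $\phi(a) \in G_v$ by assumption we get $\phi(b) = \phi(a)^{-1} \in G_v$ immediately. So I may assume $ab \neq 1$; as $a,b \in G_v$ this forces $ab \in G_v \setminus \{1\}$, an element of length one, whence $\phi(ab) \sim_G ab$ by the hypothesis of the corollary. The remaining hypotheses of Lemma \ref{PI_technical} are then checked as follows: since $|a| = 1$ we have $\phi(a) \sim_G a$, so $\phi(a) \neq 1$ and therefore $\phi(a) \in G_v \setminus \{1\}$; and since $|b| = 1$ we have $\phi(b) \sim_G b \in G_v$, so $\phi(b) \in G_v^G \setminus \{1\}$. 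Thus all hypotheses are met with the two vertices coinciding.

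Applying Lemma \ref{PI_technical} I obtain a factorisation $\phi(b) = w b' w^{-1}$ with $b' \in G_v$ and $w \in N_G(G_v) = G_{\star(v)} = G_{\link(v)} G_v$. Writing $w = w_1 w_2$ with $w_1 \in G_{\link(v)}$ and $w_2 \in G_v$, the element $w_2 b' w_2^{-1}$ again lies in $G_v$. The key observation is that every vertex of $\link(v)$ is adjacent to $v$, so every element of $G_{\link(v)}$ commutes with every element of $G_v$; in particular $w_1$ centralises $w_2 b' w_2^{-1}$, and hence
\begin{displaymath}
	\phi(b) = w_1 \left( w_2 b' w_2^{-1} \right) w_1^{-1} = w_2 b' w_2^{-1} \in G_v.
\end{displaymath}
Since $b \in G_v \setminus \{1\}$ was arbitrary, this yields $\phi(G_v) \subseteq G_v$.

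I do not expect a genuine obstacle here, as the substantive work is already carried out in Lemma \ref{PI_technical}: the present corollary is essentially a clean reading of that lemma. The only two points that need care are recognising that the lemma is permitted to be invoked with the two vertices equal (taking the given $a$ and the chosen $b$ in the same vertex group $G_v$), and separating off the case $ab = 1$, to which the lemma does not apply but which is handled directly from $\phi(a) \in G_v$.
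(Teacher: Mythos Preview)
Your proof is correct and follows essentially the same route as the paper: pick $b\in G_v\setminus\{1\}$, verify the hypotheses of Lemma~\ref{PI_technical} with both vertices equal to $v$, and conclude that the minimal-length conjugator $w$ lies in $N_G(G_v)$, forcing $\phi(b)\in G_v$. Two minor simplifications: the case $ab=1$ need not be separated (then $\phi(ab)=1\sim_G 1=ab$ trivially, and Lemma~\ref{PI_technical} still applies), and once you know $w\in N_G(G_v)$ you get $\phi(b)=wb'w^{-1}\in G_v$ immediately from the definition of normaliser, without decomposing $w$.
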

\begin{proof}
	Take arbitrary $b \in G_v \setminus \{1\}$. We see that $\phi(b) \sim_G b$ by assumption and thus $\phi(b) \neq 1$. Clearly $|ab| \leq 1$ and thus $\phi(ab) \sim_G ab$ as well. Pick $b'\in G_v \setminus \{1\}$ and $w \in G$ such that $\phi(b) = w b' w^{-1}$ and $|w|$ is minimal. By Lemma \ref{PI_technical} we see that $w \in G_v G_{\link(v)} = N_G(G_v)$ and thus $\phi(b) \in G_v$. We see that $\phi(G_v) \subseteq G_v$. 
\end{proof}

Corollary \ref{PI_stable} tells us that for $v\in V\Gamma$ and $\phi \in \End(G)$, such that $\phi(g) \sim g$ for every $g \in G$ with $|g|=1$, we have either $G_v \cap \phi(G_v) = \{1\}$ or $\phi(G_v)\subseteq G_v$. Therefore it makes sense to give the following definition. Let $G = \Gamma\mathcal{G}$ be a graph product and let $\phi \in \End(G)$, such that $\phi(g) \sim g$ for every $g \in G$ with $|g|=1$. We say that a vertex $v \in V\Gamma$ is \emph{stabilised} by $\phi$ if $\phi(G_v) \subseteq G_v$. We say that a subset $S \subseteq V\Gamma$ is \emph{stabilised} by $\phi$ if every vertex $v \in S$ is stabilised by $\phi$.

Lemma \ref{PI_technical} together with Corollary \ref{PI_stable} allows us to formulate the following corollary as an immediate consequence. Recall that for $A, B \subseteq V\Gamma$ we have $G_A \cap G_B = G_{A \cap B}$.
\begin{corollary}
	\label{PI_technical_cor}
	Let $\phi \in \End(G)$ such that $\phi(g) \sim_G g$ for every $g \in G$ with $|g| \leq 2$. Let $V_0 \subseteq V\Gamma$ be stabilised by $\phi$ and assume that $V_0 \neq \emptyset$. Let $b \in G_v \setminus\{1\}$ be arbitrary, for some $v \in V\Gamma$. Pick $b' \in G_v \setminus \{1\}$ and $w \in G$ such that $\phi(b) = w b' w^{-1}$ and $|w|$ is minimal. Then
	\begin{displaymath}
		w \in \bigcap_{u \in V_0}N_G(G_u) = \bigcap_{u \in V_0}G_{\star(u)} = G_S,
	\end{displaymath}
	where $S = \cap_{v\ \in V_0}\star(v)$.
\end{corollary}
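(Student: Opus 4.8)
The plan is to read this statement as a direct, vertex-by-vertex application of Lemma \ref{PI_technical}. The decisive observation is that the conjugator $w$ is chosen once and for all to have minimal length among all expressions $\phi(b) = w b' w^{-1}$ with $b' \in G_v$, and this minimality condition constrains $w$ alone, independently of any vertex in $V_0$. Hence it suffices to show, for each individual $u \in V_0$, that the hypotheses of Lemma \ref{PI_technical} are satisfied for this one fixed $w$; the lemma then places $w$ in $N_G(G_u)$, and letting $u$ range over all of $V_0$ lands $w$ in the intersection.

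First I would fix an arbitrary $u \in V_0$ and manufacture the element $a$ demanded by Lemma \ref{PI_technical}. Choose any $a \in G_u \setminus \{1\}$, which is possible since the vertex groups are non-trivial. Because $V_0$ is stabilised by $\phi$ we have $\phi(G_u) \subseteq G_u$, so $\phi(a) \in G_u$; and because $|a| = 1 \leq 2$ the standing hypothesis gives $\phi(a) \sim_G a$, whence $\phi(a) \neq 1$. Thus $\phi(a) \in G_u \setminus \{1\}$, as required. The remaining hypotheses are read straight off the $|g| \leq 2$ assumption: since $|b| = 1$ we get $\phi(b) \sim_G b$, so $\phi(b) \in G_v^G \setminus \{1\}$, and since $|ab| \leq 2$ we get $\phi(ab) \sim_G ab$.

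With all hypotheses in place, Lemma \ref{PI_technical} applies to the data $(u, a, v, b, b', w)$ and yields $w \in N_G(G_u)$. As $u \in V_0$ was arbitrary, this gives $w \in \bigcap_{u \in V_0} N_G(G_u)$. To finish I would rewrite the intersection: Remark \ref{normaliser} identifies $N_G(G_u) = G_{\star(u)}$ for each vertex, and the identity $G_A \cap G_B = G_{A \cap B}$ for full subgroups, applied finitely many times (legitimate because $\Gamma$, and hence $V_0$, is finite), collapses $\bigcap_{u \in V_0} G_{\star(u)}$ to $G_S$ with $S = \bigcap_{u \in V_0} \star(u)$.

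Frankly there is no serious obstacle here: the corollary is essentially a repackaging of Lemma \ref{PI_technical}, and the only point demanding care is the one highlighted above — that a single minimal conjugator $w$ simultaneously witnesses the conclusion for every $u \in V_0$, so that no re-selection of $w$ (which might disturb minimality, or the conclusion at the other vertices) is ever needed. The verification that $\phi(a) \neq 1$ via the conjugacy hypothesis is the only other spot where one must not be careless.
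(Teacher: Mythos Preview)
Your argument is correct and is exactly what the paper has in mind: it declares the corollary an immediate consequence of Lemma~\ref{PI_technical} (together with Corollary~\ref{PI_stable}, which is what underlies the stabilisation hypothesis), applied one vertex $u\in V_0$ at a time with the single minimal conjugator $w$ fixed throughout. One harmless nit: $\Gamma$ is not assumed finite in this section, so your justification of $\bigcap_{u\in V_0} G_{\star(u)} = G_S$ by ``finitely many applications of $G_A\cap G_B=G_{A\cap B}$'' is not quite right in general --- but the identity holds anyway (e.g.\ via $\supp(g)\subseteq\star(u)$ for all $u$), and in the only application (Lemma~\ref{almost pointwise inner is inner}) one has $V_0\subseteq U$ finite.
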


Recall that a vertex $v \in V\Gamma$ is called \emph{central} if $\link(v) = V\Gamma \setminus \{v\}$. Clearly if $v \in V\Gamma$ is a central vertex then $G = G_v \times G_{V\Gamma \setminus \{v\}}$. Note that if $V\Gamma = \{v\}$  then $v$ is central, hence if the graph $\Gamma$ does not contain a central vertex then necessarily $|V\Gamma| \geq 2$.

%{\color{red}
\begin{lemma}
	\label{almost pointwise inner is inner}
	Let $\Gamma$ be a graph and suppose that there is $U \subseteq V\Gamma$ such that $|U| < \infty$ and $U$ is coneless. Let $\mathcal{G} = \{G_v \mid v \in V\Gamma\}$ be a family of non-trivial groups and let $G = \Gamma \mathcal{G}$ be the graph product of $\mathcal{G}$ with respect to $\Gamma$. Let $\phi_0 \in \End(G)$ and assume that $\phi_0(g) \sim_G g$ for all $g \in G$ such that $|g| \leq 2$. Then $\phi_0 \in \Inn(G)$.
\end{lemma}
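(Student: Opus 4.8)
The plan is to post-compose $\phi_0$ with a suitable inner automorphism and then show that the resulting endomorphism is $\id_G$. First I would record that the hypothesis is stable under this operation: if $\iota_c \colon x \mapsto cxc^{-1}$ denotes conjugation by $c \in G$, then $(\iota_c \circ \phi_0)(g) = c\,\phi_0(g)\,c^{-1} \sim_G \phi_0(g) \sim_G g$ for every $g$ with $|g| \leq 2$, so $\iota_c \circ \phi_0$ again satisfies the assumptions; moreover $\phi_0 \in \Inn(G)$ if and only if $\iota_c \circ \phi_0 \in \Inn(G)$. Thus I am free throughout to replace $\phi_0$ by $\iota_c \circ \phi_0$, and the goal becomes to reach, after such a replacement, an endomorphism that equals $\id_G$ (which then forces $\phi_0 = \iota_c^{-1} \in \Inn(G)$).

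The first main step is to stabilise every vertex of $U$, by induction on the size of an already-stabilised subset $V_0 \subseteq U$. For the base case I pick $u_0 \in U$ (we may assume $U \neq \emptyset$, as otherwise $V\Gamma = \emptyset$ and $G$ is trivial) and $a \in G_{u_0}\setminus\{1\}$; since $\phi_0(a)\sim_G a$ I may write $\phi_0(a) = g a g^{-1}$ and replace $\phi_0$ by $\iota_{g^{-1}}\circ\phi_0$, so that $a \mapsto a \in G_{u_0}$, whence $u_0$ is stabilised by Corollary \ref{PI_stable}. For the inductive step, suppose the current endomorphism $\phi$ stabilises a non-empty $V_0 \subseteq U$ and take $v \in U\setminus V_0$ together with $b \in G_v\setminus\{1\}$. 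Writing $\phi(b) = w b' w^{-1}$ with $b' \in G_v\setminus\{1\}$ and $|w|$ minimal, Corollary \ref{PI_technical_cor} gives $w \in G_S$ with $S = \bigcap_{u\in V_0}\star(u)$; in particular $w \in G_{\star(u)} = N_G(G_u)$ for every $u \in V_0$. Replacing $\phi$ by $\iota_{w^{-1}}\circ\phi$ sends $b$ to $b' \in G_v$, so $v$ becomes stabilised by Corollary \ref{PI_stable}, while each previously stabilised $G_u$ ($u\in V_0$) satisfies $w^{-1}\phi(G_u)w \subseteq w^{-1}G_u w = G_u$ because $w$ normalises $G_u$. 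Hence $V_0 \cup \{v\}$ is stabilised, and after finitely many steps (as $|U| < \infty$) I obtain $\phi = \iota_c\circ\phi_0$ stabilising all of $U$.

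Next I would exploit conelessness. Applying Corollary \ref{PI_technical_cor} with $V_0 = U$ gives $S = \bigcap_{u\in U}\star(u) = \emptyset$, so $G_S = \{1\}$; thus for every $v \in V\Gamma$ and every $b\in G_v\setminus\{1\}$ the minimal-length conjugator is $w = 1$ and $\phi(b) \in G_v$, i.e. $\phi$ stabilises every vertex group. To upgrade ``$\phi$ stabilises $G_v$'' to ``$\phi$ fixes $G_v$ pointwise'', note that conelessness forbids central vertices: a central $v$ would lie in $\star(u)$ for all $u$, hence in $\bigcap_{u\in U}\star(u) = \emptyset$. So every $v$ admits a non-adjacent $u$; choosing $a\in G_u\setminus\{1\}$ and $b \in G_v\setminus\{1\}$, the element $ab$ is a cyclically reduced product with $\S(ab) = \emptyset$ and $\p(ab) = ab$, and $\phi(ab) = \phi(a)\phi(b)$ is likewise a cyclically reduced product of length $2$. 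Since $\phi(ab)\sim_G ab$, the conjugacy criterion (Lemma \ref{conjugacy criterion for graph products}) forces $\phi(a)\phi(b)$ to be a cyclic permutation of $ab$, namely $ab$ or $ba$; as $\{u,v\}\notin E\Gamma$ no syllable shuffling is possible, so by the normal form theorem the first syllable $\phi(a)\in G_u$ rules out $ba$. Therefore $\phi(a)\phi(b) = ab$, giving $\phi(a) = a$ and $\phi(b) = b$. As $v$ was arbitrary and the $G_v$ generate $G$, I conclude $\phi = \id_G$, whence $\phi_0 \in \Inn(G)$.

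I expect the inductive stabilisation of $U$ to be the delicate part: the crucial point is that the minimal-length conjugator produced by Corollary \ref{PI_technical_cor} lies in the intersection of the normalisers $N_G(G_u)$ over the already-stabilised vertices, so that conjugating to capture a new vertex does not destroy the stabilisations already achieved. The conelessness hypothesis then enters twice — once to collapse $G_S$ to the trivial group, forcing every vertex group to be stabilised, and once to guarantee a non-adjacent partner for each vertex, which is exactly what the length-two conjugacy computation needs in order to promote stabilisation to pointwise fixing.
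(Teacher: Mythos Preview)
Your proof is correct and follows essentially the same route as the paper's: an inductive/maximality argument using Corollary~\ref{PI_technical_cor} to stabilise all of $U$, then conelessness collapsing $G_S$ to $\{1\}$ so every vertex is stabilised, and finally a length-two conjugacy computation to force $\phi=\id_G$. The only noticeable difference is in that last step: the paper retracts onto $G_{\{u,v\}}\cong G_u*G_v$ and invokes the conjugacy criterion for free products, whereas you stay in $G$ and apply Lemma~\ref{conjugacy criterion for graph products} together with the normal form theorem directly---both arguments are equally short and valid here.
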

%\todo{I changed the proof slightly following referee's suggestion, now it proves stronger statement. Also, I don't use the fact that $\phi$ is an automorphisms anywhere, so I changed the assumptions to endomorphism}
\begin{proof}
	Pick $\phi \in \Inn(G) \phi_0$ such that the subset of vertices $V_0 \subseteq U$ stabilised by $\phi$ is maximal. Evidently $V_0 \neq \emptyset$. Denote 
	\begin{displaymath}
		N = \bigcap_{u \in V_0}N_G(G_u) = \bigcap_{u \in V_0}G_{\star(u)} = G_S,
	\end{displaymath}
	where $S = \bigcap_{u \in V_0}\star(u)$. First we show that all vertices of $U$ are stabilised by $\phi$. Suppose  that $V_0 \neq U$. Take $v \in U \setminus V_0$ and let $b \in G_v \setminus \{1\}$ be arbitrary. Pick $b' \in G_v \setminus \{1\}$ and $w \in G$ such that  $\phi(b) = w b' w^{-1}$ and $w \in G$ and $|w|$ is minimal. Note that $b \sim_{G} b'$. By Corollary \ref{PI_technical_cor} we see that $w \in N$. Let $\phi_w$ be the inner automorphism corresponding to $w$. Note that $\phi_w^{-1}\circ \phi \in \Inn(G)\phi = \Inn(G)\phi_0$. Clearly $\phi_w(G_{u})=G_u$ for all $u \in V_0$ and thus $\left(\phi_w^{-1}\circ \phi \right)(G_u) \subseteq G_u$. Also we see that $\left(\phi_w^{-1}\circ \phi\right) (b) = b' \in G_v$ and thus by Corollary \ref{PI_stable} we see that $\left( \phi_w^{-1}\circ \phi \right)(G_v) \subseteq G_v$ which is a contradiction as $\phi$ was chosen so that the subset $V_0$ of stabilised vertices of $U$ is maximal. Hence we see that $V_0 = U$. This implies that $S = \bigcap_{u \in V_0}G_{\star(u)} = \emptyset$ and consequently $G_S = \{1\}$.
	
	 Now we show that all vertices of $\Gamma$ are stabilised. Let $v \in V\Gamma \setminus U$ be arbitrary, take $u \in U$, such that $\{u,v\} \not \in E\Gamma$ (such $u$ always exists as $U$ is coneless) and let $a \in G_u\setminus\{1\}$, $b \in G_v\setminus \{1\}$ be arbitrary. Again, pick $b' \in G_v \setminus \{1\}$ and $w \in G$ such that  $\phi(b) = w b' w^{-1}$ and $|w|$ is minimal. As all the vertices in $U$ are stabilised by $\phi$ we see by Corollary \ref{PI_technical_cor} that $w \in G_S = \{1\}$. We see that $\phi(b) = b'\in G_v$ and thus $\phi(G_v) \subseteq G_v$ by Corollary \ref{PI_stable}. It follows that $\phi(G_v) \subseteq G_v$ for every $v \in V\Gamma$.
	 
	 Finally, we show that $\phi$ is the identity on $G$. Let $v \in V\Gamma$ be given. Again, we can find $u \in U$ such that $\{u,v\} \not\in E\Gamma$. Let $a \in G_u \setminus \{1\}$ and $b \in G_v \setminus \{1\}$ be arbitrary. Clearly $\langle G_u, G_v \rangle = G_{\{u,v\}} \cong G_u \ast G_v$. Since $G_{\{u,v\}}$ is a retract and $\phi(a)\phi(b) \in G_u G_v \subseteq G_{\{u,v\}}$, we see that $ab \sim_{G_{\{u,v\}}} \phi(a)\phi(b)$. By the conjugacy criterion for free products \cite[Theorem 4.2]{magnus} we get that $\phi(a) = a$ and $\phi(b) = b$ thus $\phi\restriction_{G_v} = \id_{G_v}$. We see that $\phi \restriction_{G_v} = \id_{G_v}$ for every $v \in V\Gamma$ and thus $\phi = \id_G$.
	 
	 It follows that $\phi_0 \in \Inn(G)$.
\end{proof}
%}
Note that Lemma \ref{almost pointwise inner is inner} immediately implies Theorem \ref{no central vertex implies property A}.
\begin{proof}[Proof of Theorem \ref{no central vertex implies property A}]
	Let $\phi \in \End(G)$ be arbitrary and suppose that $\phi(g) \sim g$ for all $g \in G$. Then $\phi \in  \Inn(G)$ by Lemma \ref{almost pointwise inner is inner}. In particular, we see that $\Autpi(G) \subseteq \Inn(G)$ and thus $G$ has Grossman's property (A).
\end{proof}

Let us discuss when does a graph $\Gamma$ satisfy the assumptions of Lemma \ref{almost pointwise inner is inner}, i.e. when does a graph $\Gamma$ contain a finite coneless subset? Obviously, if $\Gamma$ is finite then $\Gamma$ contains a coneless subset if and only if $\Gamma$ does not contain a central vertex. For a graph $\Gamma$ we define the \emph{complement graph} $\Gamma^{\mathop{c}}$ in the following way: $V\Gamma^{\mathop{c}} = V\Gamma$ and $E\Gamma^{\mathop{c}} = \binom{V\Gamma}{2} \setminus E \Gamma$. Obviously, the graph $\Gamma$ is irreducible if and only if $\Gamma^{\mathop{c}}$ is connected. Every connected component $\Gamma_1^{\mathop{c}} \leq \Gamma^{\mathop{c}}$ corresponds to a full subgraph $\Gamma_1 \leq \Gamma$, where $V\Gamma_1^{\mathop{c}} = V\Gamma_1$, and we say that $\Gamma_1$ is an \emph{irreducible component} of $\Gamma$. One can easily check that $\Gamma$ contains a finite coneless subset if and only if $\Gamma$ has only finitely many irreducible components and does not contain a central vertex.

We leave the proof of the following lemma as a simple exercise for the reader.
\begin{lemma}
	\label{property A in direct products}
	Let $G_1, \dots, G_n$ be groups. The group $G = \Pi_{i=1}^n G_i$ has Grossman's property (A) if and only if the group $G_i$ has Grossman's property (A) for each $i = 1, \dots,n$.
\end{lemma}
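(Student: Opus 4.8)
The plan is to reduce to the case of two factors and then analyse how a pointwise inner automorphism interacts with the direct-product structure. Since $\prod_{i=1}^n G_i \cong G_1 \times \left(\prod_{i=2}^n G_i\right)$, an induction on $n$ reduces everything to the statement that for arbitrary groups $A,B$ the product $A \times B$ has Grossman's property (A) if and only if both $A$ and $B$ do; the base case $n = 1$ is trivial, and in the inductive step one feeds the two-factor equivalence applied to $G_1 \times \left(\prod_{i=2}^n G_i\right)$ together with the inductive hypothesis for $\prod_{i=2}^n G_i$. So fix groups $A$ and $B$ and write $G = A \times B$.

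The crux is the observation that every pointwise inner automorphism of $G$ respects the two factors. Indeed, let $\phi \in \Autpi(G)$ and write $\phi(a,b) = (\alpha(a)\delta(b), \gamma(a)\beta(b))$ as in Remark \ref{automorphism of a direct product}, with $\alpha \in \End(A)$, $\beta \in \End(B)$, $\gamma \in \Hom(A,B)$ and $\delta \in \Hom(B,A)$. Conjugation in a direct product acts coordinatewise, so the $G$-conjugacy class of $(a,1)$ is contained in $A \times \{1\}$; since $\phi(a,1) \sim_G (a,1)$, this forces $\phi(a,1) \in A \times \{1\}$, i.e. $\gamma$ is trivial and $\alpha(a) \sim_A a$ for every $a \in A$. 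Symmetrically $\delta$ is trivial and $\beta(b) \sim_B b$ for every $b \in B$. Hence $\phi$ acts diagonally as $\alpha \times \beta$; as $\phi$ is bijective, both $\alpha$ and $\beta$ are automorphisms, and by the previous lines they are pointwise inner. Thus every $\phi \in \Autpi(A \times B)$ is of the form $\alpha \times \beta$ with $\alpha \in \Autpi(A)$ and $\beta \in \Autpi(B)$.

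For the backward implication, suppose $A$ and $B$ have property (A) and let $\phi \in \Autpi(G)$. By the observation above $\phi = \alpha \times \beta$ with $\alpha \in \Autpi(A)$ and $\beta \in \Autpi(B)$; property (A) of $A$ and $B$ supplies $x \in A$ and $y \in B$ such that $\alpha$ is conjugation by $x$ and $\beta$ is conjugation by $y$, whence $\phi$ is conjugation by $(x,y)$ and so $\phi \in \Inn(G)$. For the forward implication, suppose $G$ has property (A) and let $\alpha \in \Autpi(A)$; I would lift it to $\phi = \alpha \times \id_B \in \Aut(G)$. Since $\alpha(a) \sim_A a$, each $(a,b)$ satisfies $\phi(a,b) = (\alpha(a), b) \sim_G (a,b)$, so $\phi \in \Autpi(G)$; property (A) of $G$ makes $\phi$ inner, i.e. $\phi$ is conjugation by some $(x,y) \in G$, and comparing first coordinates yields $\alpha(a) = x a x^{-1}$ for all $a$, so $\alpha \in \Inn(A)$. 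The same argument with the roles of $A$ and $B$ exchanged handles $B$.

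The only point requiring any care is the factor-preservation claim of the second paragraph, which is precisely where pointwise innerness (as opposed to an arbitrary automorphism, which may mix the factors --- for instance a coordinate swap when $A \cong B$) is genuinely used; once it is established the two implications are immediate, and feeding the two-factor equivalence into the induction closes the general case.
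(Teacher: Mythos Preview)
Your proof is correct. The paper actually leaves this lemma as an exercise for the reader and provides no proof of its own, so there is nothing to compare against; your argument via the two-factor case and induction, using the factor-preservation forced by pointwise innerness (exactly the content of Remark \ref{automorphism of a direct product} together with coordinatewise conjugacy in direct products), is precisely the kind of straightforward verification the author had in mind.
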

	
Now we are ready to prove Corollary \ref{theorem A}
\begin{proof}[Proof of Corollary \ref{theorem A}]
	Let $C \subseteq V\Gamma$ denote the set of central vertices of the graph $\Gamma$. Note that the induced full subgraph $\Gamma_{V\Gamma \setminus C}$ does not contain central vertices, hence the group $G_{V\Gamma \setminus C}$ has Grossman's property (A) by Theorem \ref{no central vertex implies property A}. The group $G$ splits as $G = G_{\Gamma V \setminus C} \times \prod_{v \in C} G_v$, a direct product of finitely many groups. By Lemma \ref{property A in direct products} we see that the group $G$ has Grossman's property (A) if and only if $G_v$ has Grossman's property (A) for every $v \in C$.	
\end{proof}

Note that Corollary \ref{theorem A} does not hold for infinite graphs. Let $\Gamma$ be a complete graph on countably infinitely many vertices and let $\{G_v \mid v \in V\Gamma\}$ be a family of groups such that $G_v \cong F_2$, where $F_2$ is the free group on two generators, for every $v \in V\Gamma$. We see that $G = \Gamma\mathcal{G}$ is isomorphic to $\prod_{n \in \mathbb{N}}F_2$. Let $w \in F_2 \setminus \{1\}$ be arbitrary and consider the automorphism $\phi_w \in \Aut(G)$ defined on the coordinates as follows: 
\begin{displaymath}
	\phi_w(f_1, f_2, f_3, \dots) = (wf_1 w^{-1}, w^2 f_2 w^{-2}, w^3 f_3 w^{-3}, \dots).
\end{displaymath}
	It is obvious that $\phi_w \in \Autpi(G)\setminus \Inn(G)$, hence $G$ does not have Grossman's property (A), but $F_2$ has Grossman's property (A) by \cite[Lemma 1]{grossman}. Note that the group $G$ can be actually obtained as a RAAG corresponding to an infinite graph without central vertices. However, this graph does not contain a finite coneless subset.

In the rest of the section we prove three technical results about conjugacy in graph products of groups that will be useful in Section \ref{CD pairs in GP}
\begin{lemma}
	\label{direct}
	Let $u,v \in V\Gamma$ be such that $\{u,v\} \in E\Gamma$ and let $a \in G_u \setminus\{1\}$, $b \in G_v \setminus \{1\}$ be arbitrary. Let $\phi_0 \in \End(G)$ and assume that $\phi_0(a) \sim_G a$ and $\phi_0(b) \sim_G b$. Then $\phi_0(ab) \sim_G ab$. 
\end{lemma}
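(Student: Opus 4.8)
The plan is to exploit the fact that $\phi_0$ is a homomorphism together with the commutation $ab = ba$ (which holds because $\{u,v\}\in E\Gamma$) in order to pin down $\phi_0(b)$ relative to $\phi_0(a)$. From $\phi_0(a)\phi_0(b) = \phi_0(ab) = \phi_0(ba) = \phi_0(b)\phi_0(a)$ I read off that $\phi_0(a)$ and $\phi_0(b)$ commute in $G$. Since $\phi_0(a) \sim_G a$ and $a \in G_u$ is already cyclically reduced, the conjugacy criterion (Lemma \ref{conjugacy criterion for graph products}) supplies a cyclically reduced representative of the class inside $G_u$; thus I may write $\phi_0(a) = x a' x^{-1}$ for some $a' \in G_u \setminus \{1\}$ and $x \in G$.

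The key step is to locate $\phi_0(b)$. Setting $c = x^{-1}\phi_0(b)x$, the commutation of $\phi_0(a)$ and $\phi_0(b)$ gives $c \in C_G(a')$, and by Remark \ref{centralisers_in_graph_products} we have $C_G(a') = C_{G_u}(a') \times G_{\link(u)}$, so $c$ factors as $c = c_1 c_2$ with $c_1 \in C_{G_u}(a') \subseteq G_u$ and $c_2 \in G_{\link(u)}$. To force $c_1 = 1$ I apply the canonical retraction $\rho_u \colon G \to G_u$: on one hand $\rho_u(c) = c_1$, because $\rho_u$ annihilates $G_{\link(u)}$; on the other hand $c \sim_G \phi_0(b) \sim_G b$ with $b \in G_v$ and $v \neq u$, so $\rho_u$ sends every conjugate of $b$ to $1$. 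Hence $c_1 = 1$ and $c = c_2 \in G_{\link(u)}$. This is the crux of the whole argument: commutativity forces $\phi_0(b)$ to be conjugated by the \emph{same} element $x$ into the link group $G_{\link(u)}$.

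With this in hand the conclusion is essentially bookkeeping. We obtain $\phi_0(ab) = x a' x^{-1}\cdot x c x^{-1} = x(a'c)x^{-1}$, so it suffices to check $a'c \sim_G ab$, where $a' \in G_u$ and $c \in G_{\link(u)}$. Here $a' \sim_{G_u} a$ and $c \sim_{G_{\link(u)}} b$ (the second using $G_v \subseteq G_{\link(u)}$), in both cases because a conjugacy in $G$ between two elements of a full subgroup descends, via the corresponding retraction, to a conjugacy inside that full subgroup. Since $G_{\star(u)} = G_u \times G_{\link(u)}$ by Remark \ref{normaliser}, conjugating the two commuting coordinates separately produces an element of $G_{\star(u)}$ carrying $ab$ to $a'c$, whence $a'c \sim_G ab$ and therefore $\phi_0(ab) \sim_G ab$.

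I expect the only genuine obstacle to be the middle paragraph, namely the realisation that the centraliser description of Remark \ref{centralisers_in_graph_products} combined with the retraction $\rho_u$ forces the $G_u$-component of $x^{-1}\phi_0(b)x$ to vanish; once $\phi_0(b)$ is trapped in $x G_{\link(u)} x^{-1}$, everything else is routine manipulation with the normal form and the direct-product structure of $G_{\star(u)}$.
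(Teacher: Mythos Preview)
Your proof is correct and follows essentially the same line as the paper's: both arguments exploit the commutation $\phi_0(a)\phi_0(b)=\phi_0(b)\phi_0(a)$ to place a conjugate of $\phi_0(b)$ inside $C_G(a')\subseteq G_{\star(u)}=G_u\times G_{\link(u)}$, and then finish using the direct-product structure. The only real difference is the choice of retraction: the paper first normalises to $\phi(a)=a$ by an inner automorphism, then retracts onto $G_{\star(u)}$ to replace the conjugator of $b$ by one in $G_{\star(u)}$ and peels off its $G_u$-part; you instead keep the conjugator $x$, retract onto $G_u$ and use $\rho_u(b)=1$ to kill the $G_u$-component of $c$ directly, landing in $G_{\link(u)}$. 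Your variant is arguably a touch cleaner, but the two proofs are morally the same. (One cosmetic remark: since $\phi_0(a)\sim_G a$, you may simply take $a'=a$; invoking the conjugacy criterion to produce $a'$ is unnecessary.)
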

\begin{proof}
	Pick $\phi \in \Inn(G)\phi_0$ such that $\phi(a) = a$. Then $\phi(b) = c b c^{-1}$ for some $c \in G$. We see that $\phi(a)\phi(b) = \phi(b) \phi(a)$ and thus $cbc^{-1} \in C_G(a)$. By Lemma \ref{centralisers_in_graph_products} we see that $C_G(a) = C_{G_u}(a)G_{\link(u)} \leq G_u G_{\link(u)}$. Note that $G_u G_{\link(u)} = G_{\{u\}\cup\link(u)}$ is a retract of $G$, let $\rho \colon G \to G_u G_{\link(u)}$ be the corresponding retraction. Since $v \in \link(u)$ we see that $\rho(b) = b$. We see that $cbc^{-1} =\rho(cbc^{-1}) = \rho(c)b\rho(c)^{-1}$. Set $c_1 = \rho(c)$. Since $c_1^{-1} \phi(ab) c \in G_{\{u\}\cup\link(v)}$, we see that $c_1^{-1}\phi(ab)c_1 = \rho(c_1^{-1} \phi(ab) c_1) = c_1^{-1} a c_1 b$. Since $c_1 \in G_u G_{\link(u)}$ there is $c_2 \in G_u$ such that $c_1^{-1} a c_1 = c_2^{-1} a c_2$. As $c_2 \in G_u$ and $\{u,v\} \in E\Gamma$ we see that 
	\begin{displaymath}
		c_1^{-1}\phi(ab)c_1 = c_1^{-1} a c_1 b =c_2^{-1} a c_2b = c_2^{-1} a b c_2.
	\end{displaymath}
	It follows that $\phi_0(ab) \sim_G ab$.
\end{proof}

\begin{corollary}
		\label{direct product}
Let $\phi \in \End(G)$. Assume that $\phi(g) \sim_G g$ for every $g \in G$ such that $g$ is cyclically reduced and $\S(g) = \emptyset$. Furthermore, suppose that $\phi(g) \sim_G g$ for all $g \in G$ such that $|g| = 1$ as well. Then $\phi(g) \sim_G g$ for all $g \in G$ such that $|g| \leq 2$. 
\end{corollary}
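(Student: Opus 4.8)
The plan is to reduce everything to a case analysis on the length of $g$ and, for the only genuinely new case $|g|=2$, to a dichotomy on whether the two vertices involved are adjacent. The cases $|g|=0$ and $|g|=1$ are immediate: $\phi(1)=1\sim_G 1$, while the length-one case is exactly one of the hypotheses. So the entire content lies in treating a length-two element $g$, which by the normal form theorem (Theorem \ref{normal form theorem for graph products}) has a reduced word $(g_1,g_2)$ with $g_1\in G_u\setminus\{1\}$, $g_2\in G_v\setminus\{1\}$ and $u\neq v$ (otherwise the two syllables would fuse under (T2)). The key observation I would isolate is that $\S(g)$ is forced to be either $\emptyset$ or all of $\supp(g)=\{u,v\}$: for $w\in\{u,v\}$ one has $w\in\S(g)$ iff $w\in\star(u)\cap\star(v)$, and since $w$ differs from one of the two vertices this holds exactly when $u$ and $v$ are adjacent. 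Thus $\S(g)=\{u,v\}$ if $\{u,v\}\in E\Gamma$ and $\S(g)=\emptyset$ otherwise.

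First I would treat the non-adjacent case $\{u,v\}\notin E\Gamma$, where I claim $g$ is cyclically reduced with $\S(g)=\emptyset$, so that it falls directly under the first hypothesis. Since $u,v$ are non-adjacent no syllable shuffling (T3) is available, so $(g_1,g_2)$ is the unique reduced word, giving $\FL(g)=\{u\}$, $\LL(g)=\{v\}$ and $\S(g)=\emptyset$; hence $(\FL(g)\cap\LL(g))\setminus\S(g)=\emptyset$, and $g$ is cyclically reduced by Lemma \ref{graph_product_cyclically_reduced_criterion}. Applying the hypothesis on cyclically reduced elements with empty $\S$ then yields $\phi(g)\sim_G g$.

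Second I would treat the adjacent case $\{u,v\}\in E\Gamma$, where $g=ab$ with $a=g_1\in G_u$ and $b=g_2\in G_v$, which is precisely the configuration of Lemma \ref{direct}. Since $|a|=|b|=1$, the length-one hypothesis gives $\phi(a)\sim_G a$ and $\phi(b)\sim_G b$, and Lemma \ref{direct} then delivers $\phi(ab)\sim_G ab$, i.e. $\phi(g)\sim_G g$. Combining the two length-two branches with the trivial $|g|\leq 1$ cases completes the argument.

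I do not expect a serious obstacle: once the dichotomy on $\S(g)$ is isolated, each branch is handed off either to the cyclically-reduced/empty-$\S$ assumption or to Lemma \ref{direct}. The only point demanding a little care is verifying cyclic reducedness and computing $\S(g)$ in the non-adjacent case through Lemma \ref{graph_product_cyclically_reduced_criterion}; this is routine but must be recorded explicitly, since it is exactly what licenses invoking the first hypothesis.
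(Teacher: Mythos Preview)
Your proof is correct and follows essentially the same approach as the paper: both reduce to the case $|g|=2$, split on whether the two supporting vertices are adjacent, handle the non-adjacent case via the cyclically-reduced/empty-$\S$ hypothesis, and handle the adjacent case via Lemma~\ref{direct}. You are simply more explicit than the paper in verifying cyclic reducedness and in computing $\S(g)$ via Lemma~\ref{graph_product_cyclically_reduced_criterion}, where the paper just says ``one can easily check.''
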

\begin{proof}
	Let $g \in G$ be arbitrary such that $|g| = 2$. Clearly, $\supp(g) = \{u,v\}$ for some $u,v \in V\Gamma$ such that $u \neq v$. One can easily check that $g$ is cyclically reduced using Lemma \ref{graph_product_cyclically_reduced_criterion}. Suppose that $\{u,v\} \not\in E\Gamma$. Then $\S(g) = \emptyset$ and $\phi(g) \sim_G g$ by assumption.
	
	Now suppose that $\{u,v\} \in E\Gamma$. Then $g = ab$ for some $a \in G_u\setminus\{1\}$, $b \in G_v \setminus \{1\}$. By assumption, $\phi(a) \sim_G a$ and $\phi(b) \sim_G b$ as $|a| = |b| =1$. Then $\phi(ab) \sim_G ab$ by the previous lemma and we are done.
\end{proof}

\begin{lemma}
	\label{almost pointwise inner}
	Let $\phi_0 \in \End(G)$ and let $u,v \in V\Gamma$ be such that $\{u,v\} \not\in E\Gamma$ and $u \neq v$. Let $a \in G_{u}\setminus \{1\}$ and $b \in G_v \setminus \{1\}$ be arbitrary and assume that $\phi_0(ab) \sim_G ab$, $\phi_0(a) \in G_u^G \setminus\{1\}$ and $\phi_0(b) \in G_v^G \setminus \{1\}$. Then $\phi_0(a) \sim_G a$ and $\phi_0(b) \sim_G b$.
\end{lemma}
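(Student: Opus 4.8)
The plan is to reduce, up to a single global conjugation, to the clean equation $\phi_0(a)\phi_0(b) = ab$, and then to extract the two desired conjugacies by applying the canonical retractions $\rho_{\{u\}}$ and $\rho_{\{v\}}$, exploiting the fact that each of these retractions annihilates every conjugate of the ``wrong'' vertex group.

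First I would use $\phi_0(ab) = \phi_0(a)\phi_0(b) \sim_G ab$ to choose $c \in G$ with $c\,\phi_0(ab)\,c^{-1} = ab$, and set $x = c\,\phi_0(a)\,c^{-1}$ and $y = c\,\phi_0(b)\,c^{-1}$, so that $xy = ab$ while $x \sim_G \phi_0(a)$ and $y \sim_G \phi_0(b)$. Since conjugacy classes are preserved, it suffices to prove $x \sim_G a$ and $y \sim_G b$. By hypothesis $x \in G_u^G$ and $y \in G_v^G$, so I may write $x = p a' p^{-1}$ and $y = q b' q^{-1}$ with $a' \in G_u \setminus\{1\}$, $b' \in G_v \setminus\{1\}$ and $p,q \in G$; in particular $x \sim_G a'$ and $y \sim_G b'$.

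The key step is to apply the retraction $\rho_{\{u\}} \colon G \to G_u$ to the identity $xy = ab$. Because $v \neq u$ we have $\rho_{\{u\}}(b') = 1$, and since $\rho_{\{u\}}$ is a homomorphism this forces $\rho_{\{u\}}(y) = \rho_{\{u\}}(q)\,\rho_{\{u\}}(b')\,\rho_{\{u\}}(q)^{-1} = 1$; likewise $\rho_{\{u\}}(b) = 1$ while $\rho_{\{u\}}(a) = a$. Hence one computes
\[
\rho_{\{u\}}(x) = \rho_{\{u\}}(x)\rho_{\{u\}}(y) = \rho_{\{u\}}(xy) = \rho_{\{u\}}(ab) = a .
\]
On the other hand $\rho_{\{u\}}(x) = \rho_{\{u\}}(p)\,a'\,\rho_{\{u\}}(p)^{-1}$ with $\rho_{\{u\}}(p) \in G_u \leq G$, so $a = \rho_{\{u\}}(x) \sim_G a'$. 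Combining this with $x \sim_G a'$ yields $x \sim_G a$, that is $\phi_0(a) \sim_G a$. Running the symmetric argument with $\rho_{\{v\}}$ gives $\phi_0(b) \sim_G b$, completing the proof.

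The only genuine obstacle here is conceptual rather than computational: the hypotheses merely assert that $\phi_0(a)$ is conjugate to \emph{some} element $a'$ of $G_u$, and distinct elements of $G_u$ need not be $G$-conjugate, so $\phi_0(a)\sim_G a$ is strictly stronger than $\phi_0(a)\sim_G a'$. A naive approach would try to pin $a'$ down to $a$ via the conjugacy criterion (Lemma \ref{conjugacy criterion for graph products}) together with a delicate normal-form analysis of the product $p a' p^{-1} q b' q^{-1}$, which is where the difficulty would ordinarily concentrate. The retraction trick sidesteps all of this: applying $\rho_{\{u\}}$ collapses the entire $y$-factor to $1$, reads off $a$ directly, and simultaneously exhibits a conjugator $\rho_{\{u\}}(p) \in G_u$ certifying $a \sim_G a'$. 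I would verify only the routine facts that $\rho_{\{u\}}$ kills every conjugate of $G_v$ and restricts to the identity on $G_u$, both immediate from the definition of the canonical retraction.
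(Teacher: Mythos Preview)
Your argument is correct, and it is genuinely different from---and simpler than---the paper's proof. The paper first normalises so that $\phi(a)=a'\in G_u$, then invokes Lemma~\ref{PI_technical} (a normal-form argument about minimal conjugators) to force the conjugator $w$ for $b$ into $N_G(G_u)$; only then does it retract onto $G_{\{u,v\}}\cong G_u\ast G_v$ and appeal to the conjugacy criterion for free products to conclude $a''=a$ and $b'=b$. Your approach bypasses all of this: by retracting onto the single vertex group $G_u$ you kill the entire $y$-factor and read off $\rho_{\{u\}}(x)=a$, which immediately exhibits $a'\sim_{G_u}a$. No minimal-length conjugator, no Lemma~\ref{PI_technical}, no free-product conjugacy criterion.

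A further bonus worth noting: your proof never uses the hypothesis $\{u,v\}\notin E\Gamma$; only $u\neq v$ is needed so that $\rho_{\{u\}}(G_v)=\{1\}$. The paper's route genuinely requires the non-edge assumption in order to identify $G_{\{u,v\}}$ with a free product. So you have in fact established a slightly stronger statement, although in the paper's applications only the non-adjacent case is ever invoked.
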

\begin{proof}
	By assumption $\phi_0(a) = w_a a' w_a^{-1}$ for some $a' \in G_u \setminus \{1\}$. Set $\phi = \phi_{w_a}^{-1} \circ \phi_0$, where $\phi_{w_a} \in \Inn(G)$ is the inner automorphism of $G$ corresponding to $w_a$. Clearly $\phi(ab) \sim_G ab$, $\phi(a) = a' \in G_u \setminus\{1\}$ and $\phi(b) \in G_v^G \setminus\{1\}$. Pick $b' \in G_v \setminus \{1\}$ and $w \in G$ such that $\phi(b) = wb'w^{-1}$ and $|w|$ is minimal. By Lemma \ref{PI_technical} we see that $w \in G_{\link(u)}G_u = N_G(G_u)$. We have $ab \sim_G \phi(ab) = a' wb'w^{-1}$ and consequently $ab \sim_G w^{-1} a' w b'$. Note that $w^{-1} a' w \in G_u$ since $w \in N_G(G_u)$. Denote $a'' = w^{-1} a' w$. Let $\rho \colon G \to G_{\{u,v\}}$ be the canonical retraction corresponding to the set of vertices $\{u,v\}$. Clearly $\rho(ab) = ab$ and $\rho(a''b') = a''b'$ and $ab \sim_{G_{\{u,v\}}} a'' b'$. Note that $G_{\{u,v\}} \cong G_u \ast G_v$ and thus by the conjugacy criterion for free products of groups (see \cite[Theorem 4.2]{magnus}) we see that $a'' = a$ and $b' = b$. It follows that $\phi_0(a) \sim_G a$ and $\phi_0(b) \sim b$.	
\end{proof}

%%%%%%%%%%%%%%%%%%%%%%%%%%%%%%%%%%%%%%%%%%%%%%%%%%%%%%%%%%%%%%%%%%%%%%%%%%%%%%%%%%%%%%%%%%%%%%%%%%%%%%%%%%%%%%%%%%%%
\section{Conjugacy distinguishable pairs in graph products}\label{CD pairs in GP}
%%%%%%%%%%%%%%%%%%%%%%%%%%%%%%%%%%%%%%%%%%%%%%%%%%%%%%%%%%%%%%%%%%%%%%%%%%%%%%%%%%%%%%%%%%%%%%%%%%%%%%%%%%%%%%%%%%%%
We say that a class $\C$ is an \emph{extension closed variety of finite groups} if the class $\C$ of finite groups is closed under taking subgroups, finite direct products, quotients and extensions. Obvious examples of extension closed varieties of finite groups are the following:
\begin{itemize}
	\item the class of all finite groups;
	\item the class of all finite $p$-groups, where $p$ is a prime number;
	\item the class of all finite solvable groups.
\end{itemize}
Unless stated otherwise (see Lemma \ref{separating map}, Lemma \ref{separable pairs} and Lemma \ref{empty stem}), in this section we will assume that the class $\C$ is an extension closed variety of finite groups. This will allow us to use the following lemma which is a direct consequence of \cite[Theorem 1.2]{mf} 
\begin{lemma}
	\label{finite graph products of finite groups are CS}
	Let $\C$ be an extension closed variety of finite groups and let $G = \Gamma \mathcal{G}$ be a graph product of $\C$-groups. Then the group $G$ is $\C$-CS.
\end{lemma}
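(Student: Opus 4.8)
The plan is to obtain the statement as a direct application of \cite[Theorem 1.2]{mf}, after checking that finite $\C$-groups automatically satisfy the hypotheses that theorem places on the vertex groups. The substantive work — reducing $\C$-conjugacy separability of a graph product to separability data attached to its vertex groups, via the normal form theorem and the conjugacy criterion (Lemma \ref{conjugacy criterion for graph products}) — has already been carried out in \cite{mf}, so I would not reprove it here; I would merely feed the finite vertex groups into it.

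First I would observe that each vertex group $G_v$ lies in $\C$ and is therefore finite, and that its pro-$\C$ topology is discrete: since $\{1\} \in \NC(G_v)$, every subset of $G_v$ is both $\C$-open and $\C$-closed. From this several consequences follow at once. The group $G_v$ is residually-$\C$, because $\bigcap_{N \in \NC(G_v)} N = \{1\}$. It is $\C$-conjugacy separable, because for any $f \not\sim_{G_v} g$ the identity homomorphism $G_v \to G_v$, with $G_v \in \C$, already witnesses that the pair is $\C$-conjugacy distinguishable. More generally, any auxiliary separability requirements on the vertex groups that \cite[Theorem 1.2]{mf} may impose (for instance separability of centralisers, cyclic subgroups, or relevant double cosets) hold trivially, since in a discrete topology every subset is separable.

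Next I would verify the standing hypotheses on the class itself, namely that $\C$ is an extension closed variety of finite groups; this is precisely our assumption in the statement of Lemma \ref{finite graph products of finite groups are CS}. With the vertex-group hypotheses confirmed and the hypotheses on $\C$ in place, I would then apply \cite[Theorem 1.2]{mf} to the family $\mathcal{G} = \{G_v \mid v \in V\Gamma\}$ over the graph $\Gamma$, concluding that $G = \Gamma\mathcal{G}$ is $\C$-CS.

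The only genuine obstacle is bookkeeping: matching the exact list of hypotheses of \cite[Theorem 1.2]{mf} to the present finite-vertex-group setting. As noted, this matching collapses to a triviality precisely because the pro-$\C$ topology on a finite $\C$-group is discrete, so no new construction, estimate, or separability argument is needed beyond quoting the cited theorem.
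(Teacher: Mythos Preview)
Your proposal is correct and matches the paper's approach exactly: the paper simply states that the lemma is a direct consequence of \cite[Theorem 1.2]{mf}, and you have spelled out the (trivial) verification of that theorem's hypotheses in the finite-vertex-group case. There is nothing to add.
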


The main result of this section is the following proposition.
\begin{proposition}
	\label{no central vertices implies CIS}
	Let $\Gamma$ be a finite simplicial graph without central vertices and let $\mathcal{G} =\{G_v \mid v \in V\Gamma\}$ be a family of non-trivial finitely generated residually-$\C$ groups. Then the group $\Gamma \mathcal{G}$ is $\C$-IAS.
\end{proposition}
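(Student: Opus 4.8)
The plan is to verify the hypothesis of the Generalised Grossman's criterion (Lemma \ref{my Grossman}), so that $\C$-IAS follows at once. Fixing $\phi \in \Aut(G)\setminus\Inn(G)$, I must exhibit $g \in G$ with $\phi(g)\not\sim_G g$ for which the pair $(\phi(g),g)$ is $\C$-CD in $G$. To locate a usable $g$ I would exploit that a finite graph without central vertices has $V\Gamma$ itself as a finite coneless set, so Lemma \ref{almost pointwise inner is inner} applies to the endomorphism $\phi$: since $\phi\notin\Inn(G)$ it cannot fix every short conjugacy class, i.e. there is $g$ with $|g|\le 2$ and $\phi(g)\not\sim_G g$. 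Moreover Corollary \ref{direct product} lets me take this witness to be either of length one (so $g\in G_v$ and $g=\s(g)$) or a cyclically reduced element with $\S(g)=\emptyset$, which for length two means $g=g_ug_v$ with $u,v$ non-adjacent. After replacing both $g$ and $\phi(g)$ by cyclically reduced conjugates (which changes neither conjugacy class nor the $\C$-CD question), the task reduces to proving that a pair of cyclically reduced, non-conjugate elements of length at most two is $\C$-CD.

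For the distinguishing step I would map $G$ onto a graph product $\bar G=\Gamma\bar{\mathcal G}$ of finite $\C$-quotients $\bar G_v=G_v/N_v$, which exists because each $G_v$ is finitely generated and residually-$\C$; this is the role of the separating map of Lemma \ref{separating map}. By Lemma \ref{finite graph products of finite groups are CS} the target $\bar G$ is itself $\C$-CS, so once I guarantee that the images of $\phi(g)$ and $g$ remain non-conjugate in $\bar G$, Remark \ref{CCD simplification} immediately promotes this to $\C$-conjugacy distinguishability of $(\phi(g),g)$ back in $G$. The whole problem therefore concentrates on choosing the $N_v\in\NC(G_v)$ fine enough that non-conjugacy survives. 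Here I would run the conjugacy criterion (Lemma \ref{conjugacy criterion for graph products}) in reverse: the failure of $\phi(g)\sim_G g$ must come from one of its three clauses — a discrepancy of length or support, a failure of the factors $\p(g)$ to agree up to cyclic permutation, or a failure of the factors $\s(g)$ to be conjugate inside the direct-product subgroup $G_{\S(g)}$ — and in each case I would isolate a single non-trivial element of a vertex group that certifies the failure (a differing syllable, or a commutator showing a candidate conjugator does not centralise a syllable). Residual-$\C$-ness then separates that element from the identity, and the separation persists in $\bar G$; this is the content of Lemma \ref{separable pairs} and, for the $\S(g)=\emptyset$ case, Lemma \ref{empty stem}.

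The step I expect to be genuinely delicate is clause (iii), the comparison of the $\s$-factors inside $G_{\S(g)}$, because at face value it appears to demand conjugacy separability of the vertex groups, which is \emph{not} among the hypotheses — only residual-$\C$-ness is available. The observation that saves the argument, and which I would foreground, is that bounding the witness length by two means I never have to separate two honestly non-conjugate elements of a single vertex group. For a length-two witness supported on non-adjacent vertices the discrepancy between $\phi(g)$ and $g$ is visible syllable by syllable and is therefore detected residually. The remaining awkward possibility is a length-one witness $g\in G_v$ whose image $\phi(g)$ is conjugate back into the \emph{same} $G_v$ by an element that is not $G_v$-conjugate to $g$; rather than try to separate these inside $G_v$, I would use the freedom in Grossman's criterion — which asks only for the \emph{existence} of one good witness — to trade this bad witness for a length-two witness on a pair of non-adjacent vertices, returning to the residually detectable regime. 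Verifying that such a replacement always exists is precisely the work carried by Lemmas \ref{separable pairs} and \ref{empty stem}. Once a $\C$-CD witness is produced for every outer $\phi$, Lemma \ref{my Grossman} yields that $\Gamma\mathcal G$ is $\C$-IAS.
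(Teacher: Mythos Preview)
Your overall strategy coincides with the paper's: verify the hypothesis of Lemma~\ref{my Grossman} by exhibiting, for each $\phi\in\Aut(G)\setminus\Inn(G)$, a $\C$-CD witness; use the contrapositives of Lemma~\ref{almost pointwise inner is inner} and Corollary~\ref{direct product} to reduce to witnesses that are either of length one or cyclically reduced with $\S(g)=\emptyset$; and invoke Lemmas~\ref{separable pairs} and~\ref{empty stem} (packaged in the paper as Lemma~\ref{CD pairs}) for the separability step. The paper merely casts the same reasoning as a proof by contradiction.

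There is, however, a genuine gap in your handling of the ``awkward'' case $a\in G_u$ with $\phi(a)\in G_u^G$ but $\phi(a)\not\sim_G a$. You correctly see that one must trade this bad length-one witness for a length-two witness supported on a non-edge, but your claim that ``verifying that such a replacement always exists is precisely the work carried by Lemmas~\ref{separable pairs} and~\ref{empty stem}'' is false: those lemmas show that a non-conjugate pair with $\S=\emptyset$ \emph{is} $\C$-CD, not that such a pair \emph{exists}. The tool you are missing is Lemma~\ref{almost pointwise inner}. Pick any $v\notin\star(u)$ and $b\in G_v\setminus\{1\}$; if $\phi(b)\notin G_v^G$ then $b$ itself is a length-one witness with a support or length discrepancy, already handled by Lemma~\ref{CD pairs}; if $\phi(b)\in G_v^G$ then the contrapositive of Lemma~\ref{almost pointwise inner} forces $\phi(ab)\not\sim_G ab$ (since otherwise $\phi(a)\sim_G a$), and $ab$ is the promised length-two witness with $\S(ab)=\emptyset$. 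This is exactly how the paper closes the loop (in contrapositive form: assuming no $\C$-CD witness exists, one deduces $\phi(g)\in G_v^G$ for all $|g|=1$, then Lemma~\ref{almost pointwise inner} upgrades this to $\phi(g)\sim_G g$, whence Corollary~\ref{direct product} and Lemma~\ref{almost pointwise inner is inner} force $\phi\in\Inn(G)$). Without invoking Lemma~\ref{almost pointwise inner} your trade is unjustified and the argument is incomplete.
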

To prove Proposition \ref{no central vertices implies CIS} we will give sufficient conditions for the pair $(f,g)$ to be $\C$-CD in the graph product (see Lemma \ref{CD pairs}) and then use this description to show that if we have and automorphism $\phi$ such that $(g,\phi(g))$ is not $\C$-CD for all $g \in G$ then necessarily $\phi$ must be inner.

%{\color{red}
The following remark demonstrates another useful property of graph products: the functorial property, i.e. that homomorphisms between vertex groups uniquely extend to a homomorphisms of graph products. 
%\todo{referee asked "What does it mean 'have functorial property'?" Is this sufficient?}
%}
\begin{remark}
\label{functional property of graph products}
Let $\Gamma$ be a simplicial graph and let $\mathcal{G} = \{G_v \mid v \in V\Gamma\}$, $\mathcal{F} = \{F_v \mid v \in V\Gamma\}$ be two families of groups such that for every $v \in V\Gamma$ there is a homomorphism $\phi_v \colon G_v \to F_v$. Then there is unique group homomorphism $\phi \colon \Gamma\mathcal{G} \to \Gamma\mathcal{F}$ such that $\phi \restriction_{G_v} = \phi_v$ for every $v \in V\Gamma$.
\end{remark}

The following lemma is an easy consequence of \cite[Lemma 7.2]{mf}.
\begin{lemma}
\label{separating map}
Suppose that C is a class of finite groups closed under taking direct products and subgroups. Let $\Gamma\mathcal{G}$ be a graph product of residually-$\C$ groups. Let $f, g \in G$ be cyclically reduced in $G$ and assume that $f \neq g$. Then there is $\mathcal{F} = \{F_v | v\in V\Gamma\}$, a family of $\C$-groups indexed by $V\Gamma$, and a homomorphism $\phi_v \colon G_v \to F_v$, for every $v \in V\Gamma$, such that for the corresponding extension $\phi \colon G \to F$, where $F = \Gamma\mathcal{F}$, all of the following are true:
	\begin{itemize}
		\item[(i)] $|g| = |\phi(g)|$ and $\supp(g) = \supp(\phi(g))$,
		\item[(ii)] $|f| = |\phi(f)|$ and $\supp(f) = \supp(\phi(f))$, 
		\item[(iii)] $\phi(f), \phi(g)$ are cyclically reduced in $F$,
		\item[(iv)] $\phi(f) \neq \phi(g)$ in $F$.
	\end{itemize}
\end{lemma}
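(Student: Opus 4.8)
The plan is to build the maps $\phi_v$ vertex-group by vertex-group, keeping two requirements separate — that no syllable of $f$ or $g$ is killed, and that $f$ and $g$ are not identified — and then to amalgamate the witnesses into a single family using closure of $\C$ under finite direct products and subgroups. First I would reduce to finite support: for every $v \notin \supp(f) \cup \supp(g)$ set $F_v = \{1\}$ with $\phi_v$ trivial, so that $\phi$ factors through the retraction $\rho_{\supp(f)\cup\supp(g)}$, which fixes both $f$ and $g$; thus only the finitely many vertices in $\supp(f)\cup\supp(g)$ matter.

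The key observation for (i)--(iii) is that they depend only on the syllable pattern. Fix reduced words for $f$ and $g$; if every syllable is sent to a non-trivial element of its $F_v$, then the resulting words in $F = \Gamma\mathcal{F}$ have exactly the same sequence of vertices, so no transformation (T1) applies and the only reductions conceivable are obtained after shufflings (T3), which are governed by $\Gamma$ and the vertex pattern alone; any such reduction would already reduce $f$ or $g$, contradicting their being reduced. Hence the image words are reduced with $|\phi(f)| = |f|$, $|\phi(g)| = |g|$, $\supp(\phi(f)) = \supp(f)$ and $\supp(\phi(g)) = \supp(g)$, giving (i) and (ii). Since $\FL$, $\LL$ and $\S$ of an element are determined by its vertex pattern and support, Lemma \ref{graph_product_cyclically_reduced_criterion} then shows $\phi(f)$ and $\phi(g)$ are cyclically reduced, giving (iii). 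So it remains to arrange simultaneously that all syllables survive and that (iv) holds.

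For (iv) I distinguish two cases. If $|f| \neq |g|$ or $\supp(f) \neq \supp(g)$, then (i) and (ii) force $\phi(f) \neq \phi(g)$ for free. Otherwise $f$ and $g$ have equal length and equal support, and here I invoke \cite[Lemma 7.2]{mf} to produce a family $\{\phi_v^{0}\colon G_v \to F_v^{0}\}$ of $\C$-quotients whose extension $\phi^0$ satisfies $\phi^0(f) \neq \phi^0(g)$. Separately, since each $G_v$ is residually-$\C$ and $f, g$ have only finitely many syllables, for every syllable $s \in G_v \setminus \{1\}$ of $f$ or $g$ I choose a $\C$-quotient of $G_v$ in which $s$ survives. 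Taking, for each $v$, the diagonal homomorphism $\phi_v$ of $\phi_v^{0}$ together with all these finitely many syllable-separating quotients, its image lies in a finite direct product of $\C$-groups and hence, being a subgroup, is itself a $\C$-group $F_v$; this defines the required family $\mathcal{F}$.

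Finally I would verify that the amalgamated $\phi$ works: the coordinate projections exhibit $\phi^0$ and each syllable-separating map as quotients of $\phi$, so $\phi(f) \neq \phi(g)$ (it already differs after projecting onto $\phi^0$) and every syllable of $f$ and $g$ is non-trivial under $\phi$ (some coordinate is). By the first two paragraphs this yields (i)--(iv). The only substantial input is the separation in the equal-length, equal-support case, which is exactly \cite[Lemma 7.2]{mf}; everything else is syllable-pattern bookkeeping and the product construction, which is why the statement is an easy consequence of that lemma.
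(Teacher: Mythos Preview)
Your proposal is correct and matches the paper's approach: the paper gives no proof at all, merely stating that the lemma ``is an easy consequence of \cite[Lemma 7.2]{mf}'', and you have supplied exactly the kind of bookkeeping (syllable-survival plus amalgamation via direct products) that this phrase is meant to evoke. Your observation that (i)--(iii) are purely combinatorial once no syllable dies, and that (iv) either comes for free or is handed to you by \cite[Lemma 7.2]{mf}, is precisely the intended argument.
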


We utilise Lemma \ref{separating map} to show that conjugacy classes of certain pairs of elements of graph products of residually-$\C$ groups can be separated in a graph product of $\C$-groups.
\begin{lemma}
	\label{separable pairs}
	Suppose that $\C$ is a class of finite groups closed under taking direct products and subgroups. Let $\Gamma$ be a graph and let $\mathcal{G} = \{G_v \mid v \in V\Gamma\}$ be a family of residually-$\C$ groups. Let $G = \Gamma\mathcal{G}$ and suppose that $f,g \in G$ are cyclically reduced elements of $G$ such that $f \not\sim_G g$ and either $\supp(f) \neq \supp(g)$ or $|f|\neq|g|$. Then there is a family of $\C$-groups $\mathcal{F} = \{F_v \mid v\in V\Gamma\}$ and a homomorphism $\phi \colon G \to F$, where $F = \Gamma\mathcal{F}$, such that $\phi(f) \not\sim_F \phi(g)$.
\end{lemma}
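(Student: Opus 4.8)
The plan is to reduce the statement to the conjugacy criterion (Lemma~\ref{conjugacy criterion for graph products}) by producing a single length- and support-preserving quotient map via Lemma~\ref{separating map}. The observation that drives everything is that the first clause of the conjugacy criterion is purely combinatorial (equality of lengths and supports), so if we can push $f$ and $g$ into a graph product of $\C$-groups while keeping their lengths, supports, and cyclic reducedness intact, the distinguishing hypothesis survives and immediately obstructs conjugacy in the image.

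First I would check that Lemma~\ref{separating map} applies. Its hypotheses are only that $f$ and $g$ be cyclically reduced and distinct; cyclic reducedness is assumed, and since either $\supp(f) \neq \supp(g)$ or $|f| \neq |g|$ we certainly have $f \neq g$ (equivalently, $f \not\sim_G g$ already forces $f \neq g$). Applying Lemma~\ref{separating map} to the pair $f,g$ therefore yields a family $\mathcal{F} = \{F_v \mid v \in V\Gamma\}$ of $\C$-groups and a homomorphism $\phi \colon G \to F = \Gamma\mathcal{F}$ such that $\supp(\phi(f)) = \supp(f)$, $|\phi(f)| = |f|$, $\supp(\phi(g)) = \supp(g)$, $|\phi(g)| = |g|$, and both $\phi(f)$ and $\phi(g)$ are cyclically reduced in $F$.

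Next I would transport the distinguishing hypothesis through $\phi$. Since $\phi$ preserves the support and length of each of $f$ and $g$, the assumption that $\supp(f) \neq \supp(g)$ or $|f| \neq |g|$ translates directly into $\supp(\phi(f)) \neq \supp(\phi(g))$ or $|\phi(f)| \neq |\phi(g)|$. In either case condition~(i) of the conjugacy criterion fails for the cyclically reduced elements $\phi(f), \phi(g) \in F$. As $F = \Gamma\mathcal{F}$ is itself a graph product, Lemma~\ref{conjugacy criterion for graph products} applies and yields $\phi(f) \not\sim_F \phi(g)$, which is exactly the required conclusion.

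The main point is that all of the genuine difficulty is already encapsulated in Lemma~\ref{separating map}: once we know the quotient can be chosen to preserve lengths, supports, and cyclic reducedness simultaneously, the present lemma is immediate from the first clause of the conjugacy criterion. I expect no real obstacle beyond confirming that the closure hypotheses on $\C$ (closed under subgroups and finite direct products) are precisely those under which Lemma~\ref{separating map} is stated, so that $F$ is a legitimate graph product of $\C$-groups and the conjugacy criterion is available inside $F$.
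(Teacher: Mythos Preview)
Your proposal is correct and follows essentially the same route as the paper: apply Lemma~\ref{separating map} to obtain a length- and support-preserving map into a graph product of $\C$-groups in which the images remain cyclically reduced, then invoke condition~(i) of Lemma~\ref{conjugacy criterion for graph products} to conclude non-conjugacy. The paper's proof is terser but otherwise identical in structure.
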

\begin{proof}
	By Lemma \ref{separating map} there is a family $\C$-groups $\mathcal{F} = \{F_v \mid v \in V\Gamma\}$ such that for every $v \in V\Gamma$ there is a homomorphism $\phi_v \colon G_v \to F_v$, such that for the corresponding extension $\phi \colon G \to F = \Gamma \mathcal{F}$ we have $|g| = |\phi(g)|$, $\supp(g) = \supp(\phi(f))$, $|f| = |\phi(f)|$, $\supp(f) = \supp(\phi(f))$ and both $\phi(f)$, $\phi(g)$ are cyclically reduced. By Lemma \ref{conjugacy criterion for graph products} we see that $\phi(f) \not\sim_F \phi(g)$.
\end{proof}

As it turns out, conjugacy classes of cyclically reduced elements with specific P-S decomposition can be always separated.
\begin{lemma}
	\label{empty stem}
	Suppose that $\C$ is a class of finite groups closed under taking direct products and subgroups. Let $\Gamma$ be a graph and let $\mathcal{G} = \{G_v \mid v \in V\Gamma\}$ be a family of residually-$\C$ groups. Let $G = \Gamma\mathcal{G}$ be a graph product of $\mathcal{G}$ with respect to $\Gamma$. Let $f\in G$ be cyclically reduced element of $G$ such that $\S(f) = \emptyset$. Then for every $g \in G \setminus f^G$ there is a family of $\C$-groups $\mathcal{F} = \{F_v \mid v \in V\Gamma\}$ and a homomorphism $\phi \colon G \to F$, where $F = \Gamma\mathcal{F}$, such that $\phi(f)\not\sim_F \phi(g)$.
\end{lemma}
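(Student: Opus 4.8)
The plan is to reduce, via the conjugacy criterion for graph products (Lemma \ref{conjugacy criterion for graph products}), to a single hard case in which $f$ and $g$ are cyclically reduced, share the same support and length, and both have empty stem, and then to separate $f$ from the finitely many cyclic permutations of $g$ simultaneously inside one graph product of $\C$-groups. First I would make the standard reduction: since any homomorphism maps conjugacy classes into conjugacy classes, I may replace $g$ by a cyclically reduced conjugate, so assume $g$ is cyclically reduced; this changes neither whether $g \in f^G$ nor the existence of a separating $\phi$. Now I distinguish two cases. If $\supp(f) \neq \supp(g)$ or $|f| \neq |g|$, then Lemma \ref{separable pairs} immediately yields a family $\mathcal{F}$ and a morphism $\phi \colon G \to F = \Gamma\mathcal{F}$ with $\phi(f) \not\sim_F \phi(g)$, and we are done. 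Otherwise $\supp(f) = \supp(g)$ and $|f| = |g|$. As $\S(\cdot)$ depends only on the support and on $\Gamma$, the hypothesis $\S(f) = \emptyset$ forces $\S(g) = \emptyset$; hence $\s(f) = \s(g) = 1$ and $\p(f) = f$, $\p(g) = g$. Feeding this into Lemma \ref{conjugacy criterion for graph products}, condition (i) holds and condition (iii) is vacuous, so $f \sim_G g$ would be equivalent to $f$ being a cyclic permutation of $g$; since $f \not\sim_G g$, we conclude that $f$ is not a cyclic permutation of $g$.

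Next I would exploit finiteness. The set $\mathcal{O}(g) = \{g^{(1)}, \ldots, g^{(k)}\}$ of all cyclic permutations of $g$ (including $g$ itself) is finite, since there are only finitely many reduced words representing $g$ and each has finitely many rotations, and every $g^{(j)}$ is cyclically reduced with $\supp(g^{(j)}) = \supp(g)$ and $|g^{(j)}| = |g|$. Because $f$ is not a cyclic permutation of $g$, we have $f \neq g^{(j)}$ for each $j$, so for every $j$ the pair $f, g^{(j)}$ consists of distinct cyclically reduced elements. Lemma \ref{separating map} then furnishes a family $\mathcal{F}^{(j)} = \{F_v^{(j)}\}$ of $\C$-groups and a morphism $\phi^{(j)} \colon G \to \Gamma\mathcal{F}^{(j)}$ preserving support, length and cyclic reducedness of $f$ and $g^{(j)}$, with $\phi^{(j)}(f) \neq \phi^{(j)}(g^{(j)})$. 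I would assemble these into one map: set $F_v = \prod_{j=1}^{k} F_v^{(j)}$ (a $\C$-group, since $\C$ is closed under finite direct products and subgroups), let $\phi_v \colon G_v \to F_v$ be the diagonal of the $\phi_v^{(j)}$, and let $\phi \colon G \to F = \Gamma\mathcal{F}$ be its extension via Remark \ref{functional property of graph products}. The coordinate projections give morphisms $\pi_j \colon F \to \Gamma\mathcal{F}^{(j)}$ with $\pi_j \circ \phi = \phi^{(j)}$.

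Finally I would verify that $\phi(f) \not\sim_F \phi(g)$. Since each $\phi^{(j)}$ keeps all syllables of $f$ and $g$ nontrivial and the commutation pattern of $\Gamma$ is unchanged, the combined $\phi$ preserves support, length and cyclic reducedness of both $f$ and $g$; in particular $\S(\phi(f)) = \S(\phi(g)) = \emptyset$, so by Lemma \ref{conjugacy criterion for graph products} applied in $F$, $\phi(f) \sim_F \phi(g)$ holds if and only if $\phi(f)$ is a cyclic permutation of $\phi(g)$. The crucial combinatorial point is that, because $\phi$ preserves length and support, every reduced word for $\phi(g)$ is the $\phi$-image of a reduced word for $g$ (all reduced words differ only by the syllable shufflings permitted by $E\Gamma$, which is the same in $G$ and $F$, by Lemma \ref{graph_product_cyclically_reduced_criterion} and the normal form theorem), so the cyclic permutations of $\phi(g)$ are precisely the elements $\phi(g^{(j)})$. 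But for each $j$ we have $\pi_j(\phi(f)) = \phi^{(j)}(f) \neq \phi^{(j)}(g^{(j)}) = \pi_j(\phi(g^{(j)}))$, whence $\phi(f) \neq \phi(g^{(j)})$ for all $j$; thus $\phi(f)$ is not a cyclic permutation of $\phi(g)$, giving $\phi(f) \not\sim_F \phi(g)$.

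The step I expect to be the main obstacle is this last identification of the cyclic permutations of $\phi(g)$ with the images $\phi(g^{(j)})$: it is exactly the bridge that lets the separation of $f$ from each individual $g^{(j)}$ upgrade to non-conjugacy of $\phi(f)$ and $\phi(g)$, and it relies essentially on the support- and length-preservation guaranteed by Lemma \ref{separating map} together with the normal form theorem. Once that is pinned down, the diagonal construction and the empty-stem reduction of the conjugacy criterion do the rest.
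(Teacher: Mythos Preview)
Your argument is correct and follows essentially the same strategy as the paper's proof: reduce to the case of equal length and support, enumerate the finitely many cyclic permutations, separate each one using Lemma~\ref{separating map}, and combine. The only cosmetic differences are that the paper enumerates the cyclic permutations of $f$ rather than of $g$ and combines the maps via $G_v/\bigcap_j\ker(\phi_v^{(j)})$ instead of your diagonal into $\prod_j F_v^{(j)}$; both choices are equivalent, and your explicit justification of the ``bridge'' step (that the cyclic permutations of $\phi(g)$ are exactly the $\phi(g^{(j)})$) is a point the paper states without elaboration.
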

\begin{proof}
	Let $g \in G \setminus f^G$ be arbitrary. Pick $g_0 \in G$ such that $g_0 \sim_G g$ and $g_0$ is cyclically reduced. Since $\S(f) = \emptyset$ we see that $\p(f) = f$. Combining Lemma \ref{conjugacy criterion for graph products} with the fact that $\S(f) = \emptyset$ we see that there are two possibilities to consider:
	\begin{itemize}
		\item[(i)] $\supp(f) \neq \supp(g_0)$ or $|f| \neq |g_0|$,
		\item[(ii)] $\p(g_0)$ is not a cyclic permutation of $f$.
	\end{itemize}
	
	We can use Lemma \ref{separable pairs} do deal with case (i).
	
	Assume that $\supp(f) = \supp(g_0)$, $|f| = |g_0|$ and that $\p(g_0)$ is not a cyclic permutation of $f$. Let $\{f_1, \dots, f_m\} \subset G$ be the set of all cyclic permutations of $f$ (including $f$). We use Lemma \ref{separating map} for each pair $f_i, g_0$, where $1 \leq i \leq m$, to obtain a family $\C$-groups $\mathcal{F}_i = \{F_v^i | v \in V\Gamma\}$ with homomorphisms $\phi_v^i \colon G_v \to F_v^i$ for all $v \in V\Gamma$. For every $v \in V\Gamma$ set $K_v = \bigcap_{i = 1}^m \ker(\phi_v^i)$ and denote $F_v = G_v / K_v$. Note that as the class $\C$ is closed under taking subgroups and direct products the set $\NC(G_v)$ is closed under intersection for every $v \in V\Gamma$ (see \cite[Lemma 2.1]{mf}) and thus $F_v \in \C$ for every $v \in V\Gamma$. Set $\mathcal{F} = \{F_v | v\in V\Gamma\}$ and let $\phi_v \colon G_v \to F_v$ be the natural projection corresponding to $v$. Let $\phi \colon G \to \Gamma \mathcal{F}$ be the natural extension. Note that $\p(\phi(g_0)) = \phi(\p(g_0))$, $\p(\phi(f)) = \phi(\p(f))$ and $\phi(f)$, $\phi(g_0)$ are cyclically reduced in $\Gamma\mathcal{F}$. Clearly the set $C = \{\phi(f_1),\dots, \phi(f_m)\}$ is the set of all cyclic permutations of $\p(\phi(f))$ and we see that $\p(\phi(g_0))\not\in C$, hence $\p(\phi(g_0))$ is not a cyclic permutation of $\phi(f)$. By Lemma \ref{conjugacy criterion for graph products} we see that $\phi(g_0) \not\sim_{\Gamma\mathcal{F}} \phi(f)$ and thus $\phi(g) \not\sim_{\Gamma\mathcal{F}} \phi(f)$.
\end{proof}

Combining Lemma \ref{separable pairs} and Lemma \ref{empty stem} together with Lemma \ref{finite graph products of finite groups are CS} we get the following description of $\C$-CD pairs in graph products.
\begin{lemma}
	\label{CD pairs}
	Let $\Gamma$ be a graph and let $\mathcal{G} = \{G_v \mid v \in V\Gamma\}$ be a family of residually-$\C$ groups. Let $G = \Gamma\mathcal{G}$ be a graph product of $\mathcal{G}$ with respect to $\Gamma$. Let $g_1,g_2 \in G$ be cyclically reduced elements of $G$ such that $g_1 \not\sim_G g_2$ and either $\supp(g_1) \neq \supp(g_2)$ or $|g_1|\neq|g_2|$. Then the pair $(g_1,g_2)$ is $\C$-CD in $G$. Furthermore, if $f \in G$ is cyclically reduced with $\S(f) = \emptyset$ then the pair $(f,g)$ is $\C$-CD for every $g \in G \setminus f^G$.
\end{lemma}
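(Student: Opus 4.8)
The plan is to reduce both assertions to the $\C$-conjugacy separability of graph products of $\C$-groups (Lemma \ref{finite graph products of finite groups are CS}) by first pushing the elements into such a graph product using the separating homomorphisms already constructed, and then transferring the conclusion back to $G$ via Remark \ref{CCD simplification}.

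For the first statement I would begin with the cyclically reduced elements $g_1, g_2$ satisfying $g_1 \not\sim_G g_2$ and either $\supp(g_1) \neq \supp(g_2)$ or $|g_1| \neq |g_2|$. Since an extension closed variety of finite groups is in particular closed under subgroups and finite direct products, Lemma \ref{separable pairs} applies and produces a family of $\C$-groups $\mathcal{F} = \{F_v \mid v \in V\Gamma\}$ together with the induced homomorphism $\phi \colon G \to F = \Gamma\mathcal{F}$ for which $\phi(g_1) \not\sim_F \phi(g_2)$. The crucial observation is that the target $F$ is itself a graph product of $\C$-groups, so Lemma \ref{finite graph products of finite groups are CS} tells us that $F$ is $\C$-CS. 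As $\phi(g_1) \not\sim_F \phi(g_2)$, the $\C$-conjugacy separability of $F$ yields that the pair $(\phi(g_1), \phi(g_2))$ is $\C$-CD in $F$. At this point Remark \ref{CCD simplification} closes the argument: possessing a homomorphism $\phi \colon G \to F$ with $\phi(g_1) \not\sim_F \phi(g_2)$ such that $(\phi(g_1), \phi(g_2))$ is $\C$-CD in $F$ is precisely the condition needed to conclude that $(g_1, g_2)$ is $\C$-CD in $G$.

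The second statement is handled identically, with Lemma \ref{empty stem} in place of Lemma \ref{separable pairs}. For $f$ cyclically reduced with $\S(f) = \emptyset$ and any $g \in G \setminus f^G$ (so that $f \not\sim_G g$), Lemma \ref{empty stem} supplies a family of $\C$-groups and the corresponding $\phi \colon G \to F = \Gamma\mathcal{F}$ with $\phi(f) \not\sim_F \phi(g)$. Applying the same two steps, namely $\C$-conjugacy separability of the target graph product via Lemma \ref{finite graph products of finite groups are CS} followed by Remark \ref{CCD simplification}, gives that $(f,g)$ is $\C$-CD in $G$.

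I do not anticipate a serious obstacle here, since the lemma is essentially a packaging of three earlier results. The only point requiring care is bookkeeping on the closure hypotheses used at each stage: the separating Lemmas \ref{separable pairs} and \ref{empty stem} require only that $\C$ be closed under subgroups and direct products, whereas invoking $\C$-conjugacy separability of the target graph product genuinely uses that $\C$ is an extension closed variety of finite groups, which is the standing assumption of this section. Making sure these assumptions are explicitly noted is what keeps the short argument honest.
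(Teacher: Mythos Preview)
Your proposal is correct and follows essentially the same argument as the paper: apply Lemma~\ref{separable pairs} (respectively Lemma~\ref{empty stem}) to obtain a homomorphism to a graph product $F$ of $\C$-groups separating the conjugacy classes, invoke Lemma~\ref{finite graph products of finite groups are CS} to get that $F$ is $\C$-CS, and conclude via Remark~\ref{CCD simplification}. Your explicit remark about which closure hypotheses on $\C$ are used at each step is a welcome addition that the paper leaves implicit.
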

\begin{proof}
	If $g_1, g_2 \in G$ are cyclically reduced and either $\supp(g_1) \neq \supp(g_2)$ or $|g_1| \neq |g_2|$ then by Lemma \ref{separable pairs} we see that there is a family of $\C$-groups $\mathcal{F} = \{F_v \mid v \in V\Gamma\}$ and a homomorphism $\gamma \colon G \to F = \Gamma\mathcal{F}$ such that $\gamma(g_1) \not\sim_F \gamma(g_2)$. By Lemma \ref{finite graph products of finite groups are CS} we see that the group $F$ is $\C$-CS and thus the pair $(\gamma(g_1),\gamma(g_2))$ is $\C$-CD in $F$. Using Remark \ref{CCD simplification} we see that the pair $(f,g)$ is $\C$-CD in $G$.
	
	Similarly, if $f \in G$ is cyclically reduced with $\S(f) = \emptyset$ then by Lemma \ref{empty stem} there is a family $\mathcal{F} = \{F_v \mid v \in V\Gamma\}$ and a homomorphism $\gamma \colon G \to F = \Gamma \mathcal{F}$ such that $\gamma(f) \not\sim_F \gamma(g)$. Again, by Lemma \ref{finite graph products of finite groups are CS} we see that the group $F$ is $\C$-CS and thus the pair $(\gamma(f),\gamma(g))$ is $\C$-CD in $F$. Using Remark \ref{CCD simplification} we see that the pair $(g_1, g_2)$ is $\C$-CD in $G$.
\end{proof}

%{\color{red}
\begin{lemma}
Let $\Gamma$ be a graph and suppose that there is $U \subseteq V\Gamma$ such that $|U| < \infty$ and $U$ is coneless. Let $\mathcal{G} = \{G_v \mid v \in V\Gamma\}$ be a family of non-trivial residually-$\C$ groups and let $G = \Gamma \mathcal{G}$ be the graph product of $\mathcal{G}$ with respect to $\Gamma$. Then for every $\phi \in \Aut(G) \setminus \Inn(G)$ there exists $g \in G$ such that $\phi(g)\not\sim_G g$ and the pair $(\phi(g),g)$ is $\C$-CD in $G$.
\end{lemma}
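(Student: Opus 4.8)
The plan is to combine the rigidity statements of the previous sections, which guarantee that under the coneless hypothesis an automorphism fixing all short elements up to conjugacy is inner, with the description of $\C$-conjugacy distinguishable pairs in Lemma \ref{CD pairs}. So first I would run $\phi$ through the contrapositive of Corollary \ref{direct product} and Lemma \ref{almost pointwise inner is inner}: since $\phi \notin \Inn(G)$, the two hypotheses of Corollary \ref{direct product} cannot both hold, hence either (A) there is a cyclically reduced $g$ with $\S(g) = \emptyset$ and $\phi(g) \not\sim_G g$, or (B) there is a length-one element $a \in G_v \setminus \{1\}$ with $\phi(a) \not\sim_G a$. The point of entering through Corollary \ref{direct product} is that it lets me ignore length-two elements with adjacent (hence full-stem) support, as those are controlled by the length-one and empty-stem cases via Lemma \ref{direct}.

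Case (A) is immediate: $g$ is cyclically reduced with empty stem and $\phi(g) \in G \setminus g^G$, so the second part of Lemma \ref{CD pairs} shows that $(g,\phi(g))$ is $\C$-CD; recalling that $\C$-conjugacy distinguishability is symmetric in its two arguments, this is the required conclusion. For case (B) I would let $c$ be a cyclically reduced conjugate of $\phi(a)$. If $\supp(c) \neq \{v\}$ or $|c| \neq 1$, then $a$ and $c$ are non-conjugate cyclically reduced elements differing in support or length, so the first part of Lemma \ref{CD pairs} makes $(a,c)$, and hence $(a,\phi(a))$, a $\C$-CD pair. The genuinely delicate possibility is that $\phi(a) \in G_v^G$, i.e. $\phi$ moves $a$ to a non-conjugate element of a conjugate of its own vertex group.

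To handle that sub-case I would use the coneless hypothesis: since $\bigcap_{u \in U}\star(u) = \emptyset$, there is $u \in U$ with $u \neq v$ and $\{u,v\} \notin E\Gamma$, and I fix any $b \in G_u \setminus \{1\}$. Then $ab$ is cyclically reduced with $\S(ab) = \emptyset$. If $\phi(ab) \not\sim_G ab$, the second part of Lemma \ref{CD pairs} makes $(ab,\phi(ab))$ a $\C$-CD pair and we are done. Otherwise $\phi(ab) \sim_G ab$, and here I would invoke Lemma \ref{almost pointwise inner} as a bridge: if in addition $\phi(b) \in G_u^G$, then together with $\phi(a) \in G_v^G$ that lemma forces $\phi(a) \sim_G a$, contradicting (B). Hence $\phi(b) \notin G_u^G$, so $\phi(b) \not\sim_G b$ and any cyclically reduced conjugate of $\phi(b)$ differs from $b$ in support or length, whence the first part of Lemma \ref{CD pairs} shows $(b,\phi(b))$ is $\C$-CD. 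In every branch I have produced an explicit witness ($g$, $a$, $ab$, or $b$) with the desired property.

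The main obstacle is precisely the sub-case of (B) where $\phi(a)$ lands in a conjugate of $G_v$: residual-$\C$-ness of the vertex groups gives no control over conjugacy classes inside a single $G_v$, so Lemma \ref{CD pairs} cannot separate $a$ from $\phi(a)$ directly, and $a$ itself cannot serve as the witness. The idea that unlocks it is to spread $a$ across a non-adjacent vertex (available by conelessness) into an empty-stem element $ab$, and to exploit Lemma \ref{almost pointwise inner}, by which fixing $ab$ up to conjugacy propagates to fixing both factors; thus a moved $a$ forces either a moved $ab$ or a moved $b$, each of which \emph{is} detectable by Lemma \ref{CD pairs}.
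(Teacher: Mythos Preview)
Your proof is correct and follows essentially the same approach as the paper: both arguments combine Lemma \ref{CD pairs}, Lemma \ref{almost pointwise inner}, Corollary \ref{direct product}, and Lemma \ref{almost pointwise inner is inner} in the same way, with the only difference being presentation. The paper argues by global contradiction (assuming no $g$ yields a $\C$-CD pair and accumulating ``we may assume'' statements until reaching $\phi \in \Inn(G)$), whereas you run the same chain contrapositively as a direct case split producing an explicit witness in each branch; the delicate sub-case where $\phi(a) \in G_v^G$ is resolved identically in both, via a non-adjacent vertex and Lemma \ref{almost pointwise inner}.
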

%\todo{following referees suggestion I made the main argument of the proof of Proposition \ref{no central vertices implies CIS} into a separate lemma which holds for a larger class of graphs}
\begin{proof}
	Let $\phi \in \Aut(G) \setminus \Inn(G)$ be arbitrary and assume that for every $g \in G$ the pair $(\phi(g),g)$ is not $\C$-CD in $G$.
		
	If $g \in G$ is cyclically reduced with $\S(g) = \emptyset$ then the pair $(f,g)$ is $\C$-CD for every $f \in G \setminus g^G$ by Lemma \ref{CD pairs}. We see that we may assume that $\phi(g) \sim_G g$ for every cyclically reduced element $g \in G$ such that $\S(g) = \emptyset$. In particular $\phi(ab)\sim_G ab$, whenever $a \in G_u \setminus \{1\}$ and $b \in G_v \setminus \{1\}$ for some $u,v \in V\Gamma$ such that $\{u,v\} \not\in E\Gamma$. 
	
	Let us analyse what happens to $g \in G$ with $|g| = 1$. Let $u \in V\Gamma$ and $a \in G_u \setminus \{1\}$ be arbitrary. Pick $h \in G$ such that $h \sim_G \phi(a)$ and $h$ is cyclically reduced. There are three cases to consider:
	\begin{itemize}
		\item[(i)]		$1<|h|$,
		\item[(ii)]		$|h| = 1$ and $\supp(h) \neq \{u\} = \supp(a)$,
		\item[(iii)]	$|h| = 1$ and $\supp(h) = \{u\}$.
	\end{itemize} 
	Using Lemma \ref{CD pairs} we see that if (i) or (ii) is the case then the pair $(a,h)$ is $\C$-CD. This means that there is a group $C \in \C$ and a homomorphism $\gamma \colon G \to C$ such that $\gamma(a) \not\sim_C \gamma(h)$. Consequently $\gamma(\phi(a)) \not\sim_C \gamma(a)$ and the pair $(\phi(a),a)$ is $\C$-CD in $G$. We see that without loss of generality we may assume that $\phi(g) \in G_v^G$, whenever $g \in G_v \setminus \{1\}$ for some $v \in V\Gamma$, because otherwise the pair $(\phi(g),g)$ would be conjugacy distinguishable as we just demonstrated.
	
	As $\Gamma$ does not contain central vertices we know that for every $u \in V\Gamma$ there is $v \in V\Gamma \setminus \{u\}$ such that $\{u,v\} \not \in E\Gamma$. Let $b \in G_v\setminus \{1\}$ be arbitrary. We see that $\phi(a) \in G_u^G$ and $\phi(b) \in G_v^G$. Clearly the element $ab$ is cyclically reduced and $\S(ab) = \emptyset$, hence $\phi(ab) \sim_G ab$ by assumption. Then by Lemma \ref{almost pointwise inner} we see that $\phi(a) \sim_G a$ and $\phi(b) \sim_G b$. This means that we may assume that $\phi(g) \sim g$ for all $g \in G$ such that $|g|=1$. Consequently, by Corollary \ref{direct product} we see that $\phi(g) \sim_G g$ for all $g \in G$ such that $|g| \leq 2$. However, using Lemma \ref{almost pointwise inner is inner} we see that $\phi \in \Inn(G)$, which is a contradiction with our original assumption that $\phi \in \Aut(G) \setminus \Inn(G)$.
	
	We see that our original assumption cannot be true, i.e. there must be an element $g \in G$ such that $\phi(g)\not\sim_G g$ and the pair $(\phi(g),g)$ is $\C$-CD in $G$.
\end{proof}
%}

Now we are ready to prove Proposition \ref{no central vertices implies CIS}
\begin{proof}[Proof of proposition \ref{no central vertices implies CIS}]
By previous lemma we see that for every $\phi \in \Aut(G) \setminus \Inn(G)$ there is $g \in G$ such that $\phi(g)\not\sim_G g$ and the pair $(\phi(g),g)$ is $\C$-CD in $G$. As $G$ is a finite graph product of finitely generated groups it is finitely generated and thus by Lemma \ref{my Grossman} we see that the group $G$ is $\C$-IAS.
\end{proof}

\begin{corollary}
	\label{CIS graph products}
	Let $\Gamma$ be a finite graph and let $\mathcal{G} = \{G_v \mid v \in V\Gamma\}$ be a family of non-trivial finitely generated residually-$\C$ groups such that the group $G_v$ is $\C$-IAS whenever the vertex $v$ is central in $\Gamma$. Then the group $G = \Gamma\mathcal{G}$ is $\C$-IAS.
\end{corollary}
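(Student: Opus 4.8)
The plan is to follow the template of the proof of Corollary~\ref{theorem A}: split off the central vertices, treat the central-vertex-free part with Proposition~\ref{no central vertices implies CIS}, treat the central vertex groups with the hypothesis, and glue the pieces together with the direct product result Proposition~\ref{cias-direct}. Throughout I may use that $\C$ is an extension closed variety of finite groups, so in particular it satisfies (c1) and (c2) and Proposition~\ref{cias-direct} is available.

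First I would let $C \subseteq V\Gamma$ be the set of central vertices of $\Gamma$. Since any two distinct central vertices are adjacent and every central vertex is adjacent to all other vertices, the central vertices span a clique joined completely to the rest of the graph, so $G$ splits as the direct product
\begin{displaymath}
	G = G_{V\Gamma \setminus C} \times \prod_{v \in C} G_v,
\end{displaymath}
exactly as in the proof of Corollary~\ref{theorem A}. The full subgraph $\Gamma_{V\Gamma\setminus C}$ contains no central vertices (a vertex central in $\Gamma_{V\Gamma\setminus C}$ would also be adjacent to every vertex of $C$, hence central in $\Gamma$), and its vertex groups are finitely generated residually-$\C$ by hypothesis, so Proposition~\ref{no central vertices implies CIS} shows that $A_0 := G_{V\Gamma\setminus C}$ is $\C$-IAS. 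This group is also finitely generated, and it is residually-$\C$ by the standard fact that a graph product of residually-$\C$ groups is again residually-$\C$ (obtainable by separating a nontrivial element from the identity via Lemma~\ref{separating map} and then applying Lemma~\ref{finite graph products of finite groups are CS}). Each central vertex group $G_v$, $v \in C$, is finitely generated and residually-$\C$ by hypothesis and $\C$-IAS by assumption.

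Next I would assemble the factors one at a time. Enumerating $C = \{v_1, \dots, v_k\}$ and setting $A_i = A_{i-1} \times G_{v_i}$, so that $A_k = G$, I would prove by induction that each $A_i$ is finitely generated, residually-$\C$, and $\C$-IAS. The inductive step is precisely Proposition~\ref{cias-direct} applied to the pair $A_{i-1}$ and $G_{v_i}$: both are finitely generated, residually-$\C$, and $\C$-IAS, so $A_i$ is $\C$-IAS; finite generation of $A_i$ is immediate, and residual-$\C$-ness holds since a finite direct product of residually-$\C$ groups is residually-$\C$. The degenerate cases are absorbed automatically: if $C = \emptyset$ then $G = A_0$ is already covered by Proposition~\ref{no central vertices implies CIS}, while if every vertex is central then $A_0 = G_\emptyset = \{1\}$ is vacuously finitely generated, residually-$\C$, and $\C$-IAS, so the induction still starts correctly. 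One concludes that $A_k = G$ is $\C$-IAS.

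The main obstacle is bookkeeping rather than a genuinely hard step. Proposition~\ref{cias-direct} is stated for a product of two factors, so the real content lies in checking that the three hypotheses (finitely generated, residually-$\C$, $\C$-IAS) are preserved along the induction and in confirming that the central-vertex-free factor $G_{V\Gamma\setminus C}$ is itself residually-$\C$; the latter is the only point that appeals outside the direct-product machinery. Once it is in place, the corollary is a clean combination of Proposition~\ref{no central vertices implies CIS} and Proposition~\ref{cias-direct}.
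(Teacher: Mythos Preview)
Your proposal is correct and follows essentially the same approach as the paper: split off the central vertices, apply Proposition~\ref{no central vertices implies CIS} to the central-vertex-free factor, and then use Proposition~\ref{cias-direct} to assemble the direct product. The paper's proof is terser (it cites an external lemma for residual-$\C$ness of $G_{V\Gamma\setminus C}$ and applies Proposition~\ref{cias-direct} in one stroke without spelling out the induction or the degenerate cases), but your more explicit bookkeeping is entirely in line with it.
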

\begin{proof}
	Let $C \subseteq V\Gamma$ denote the set of central vertices of graph $\Gamma$. Note that the induced full subgraph $\Gamma_{V\Gamma \setminus C}$ does not contain central vertices, hence the group $G_{V\Gamma \setminus C}$ is $\C$-IAS by Lemma \ref{no central vertices implies CIS}. The group $G_{V\Gamma \setminus C}$ is residually-$\C$ by \cite[Lemma 6.6]{mf}. The group $G$ splits as $G = G_{\Gamma V \setminus C} \times \prod_{v \in C} G_v$, a direct product of finitely many finitely generated $\C$-IAS residually-$\C$ groups, and thus $G$ is $\C$-IAS by Proposition \ref{cias-direct}.	
\end{proof}

Applying Proposition \ref{no central vertices implies CIS} and Corollary \ref{CIS graph products} to the class of all finite groups we immediately obtain Theorem \ref{corollary O} and Corollary \ref{theorem O}.

%%%%%%%%%%%%%%%%%%%%%%%%%%%%%%%%%%%%%%%%%%%%%%%%%%%%%%%%%%%%%%%%%%%%%%%%%%%%%%%%%%%%%%%%%%%%%%%%%%%%%%%%%%%%%%%%%%%%
\section{Graph products of residually-$p$ groups}
\label{section-p}
%%%%%%%%%%%%%%%%%%%%%%%%%%%%%%%%%%%%%%%%%%%%%%%%%%%%%%%%%%%%%%%%%%%%%%%%%%%%%%%%%%%%%%%%%%%%%%%%%%%%%%%%%%%%%%%%%%%%
Let $G$ be a group and let $p$ be a prime number. Set $K_p = [G,G]G_p \leq G$, where $G_p$ is the subgroup of $G$ generated by all elements of the form $g^p$ for $g \in G$. Note that $K_p$ is characteristic in $G$ and thus the natural projection $\pi \colon G \to G/K_p$ induces a homomorphism $\tilde{\pi} \colon \Aut(G) \to \Aut(G/K_p)$ given by $\tilde{\pi}(\phi)(gK_p) = \phi(g)K_p$ for every $\phi \in \Aut(G)$. We will use $\Aut_p(G)$ to denote $\ker(\tilde{\pi})$, i.e. the automorphisms that act trivially on the first mod-$p$ homology of $G$. Note that if $G$ is finitely generated then $G/K_p$ is actually the direct product of copies of $C_p$, the cyclic group of order $p$, and we see that $G/K_p$ is a finite $p$-group and thus $K_p$ is of finite index in $G$. Consequently, if $G$ is finitely generated then $\Aut_p(G)$ is of finite index in $\Aut(G)$. Also since $G/K_p$ is abelian we see that $\Inn(G) \leq \Aut_p(G)$ and thus $\Out_p(G) = \Aut_p(G)/\Inn(G) \leq \Out(G)$. Again, if $G$ is finitely generated then $\Out_p(G)$ is actually of finite index in $\Out(G)$.

The following is a classical result of P. Hall (see \cite[5.3.2, 5.3.3]{robinson}).
\begin{lemma}
	\label{halls lemma}
	If $G$ is a finite $p$-group, then $\Aut_p(G)$ is also a finite $p$-group.
\end{lemma}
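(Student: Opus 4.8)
The plan is to identify $\Aut_p(G)$ with the group of automorphisms that stabilise a suitable characteristic series of $G$ whose factors are all elementary abelian $p$-groups, and then to run the classical stability-group argument of Kaloujnine (the content of Robinson, \emph{5.3.2--5.3.3}). Since $G$ is finite, $\Aut(G)$ is finite, so the only thing to prove is that the order of $\Aut_p(G)$ is a power of $p$. Write $\Phi(G)=[G,G]G_p=K_p$ for the Frattini subgroup, so that $V=G/\Phi(G)$ is elementary abelian of some rank $d$, and note that by definition $\Aut_p(G)=\ker\bigl(\tilde\pi\colon\Aut(G)\to\Aut(V)\bigr)$ is exactly the set of automorphisms inducing the identity on $V$.

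First I would introduce the lower $p$-central (Frattini) series $\lambda_1(G)=G$ and $\lambda_{i+1}(G)=[\lambda_i(G),G]\,(\lambda_i(G))_p$, where $(\lambda_i)_p$ denotes the subgroup generated by $p$-th powers of elements of $\lambda_i$. Each $\lambda_i(G)$ is characteristic, the series is descending with $\lambda_2(G)=\Phi(G)$, every factor $\lambda_i/\lambda_{i+1}$ is elementary abelian, and because $G$ is a finite $p$-group the series reaches $\{1\}$ after finitely many steps, say $\lambda_{c+1}=\{1\}$. I would also record the standard containments $[\lambda_i,\lambda_j]\subseteq\lambda_{i+j}$ and $(\lambda_i)_p\subseteq\lambda_{i+1}$, as these drive every estimate below.

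The crux is to show that every $\phi\in\Aut_p(G)$ acts trivially not merely on $\lambda_1/\lambda_2=V$ but on each factor $\lambda_i/\lambda_{i+1}$, which I would prove by induction on $i$, the base case being the defining hypothesis on $\phi$. For the inductive step, observe that $\lambda_{i+1}/\lambda_{i+2}$ is generated by the images of commutators $[x,y]$ with $x\in\lambda_i,\ y\in G$ and of $p$-th powers $x^p$ with $x\in\lambda_i$. Writing $\phi(x)=xa$ with $a\in\lambda_{i+1}$ (inductive hypothesis at level $i$) and $\phi(y)=yb$ with $b\in\lambda_2$ (the level-one hypothesis), expanding $[\phi(x),\phi(y)]$ via commutator identities and $\phi(x)^p=(xa)^p$ via the Hall--Petrescu formula produces only correction terms lying in $[\lambda_{i+1},G]$, $[\lambda_i,\lambda_2]$, or $(\lambda_{i+1})_p$, hence in $\lambda_{i+2}$; therefore $\phi([x,y])\equiv[x,y]$ and $\phi(x^p)\equiv x^p$ modulo $\lambda_{i+2}$. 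Conceptually this is just the statement that the associated graded restricted Lie algebra $\bigoplus_i\lambda_i/\lambda_{i+1}$ is generated in degree one, so a $\phi$ trivial in degree one is trivial everywhere. I expect this propagation to be the main obstacle, since it is the one place where the precise multiplicative structure of the series, rather than a formal invocation of a known theorem, is needed.

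Once it is established that each $\phi\in\Aut_p(G)$ stabilises the finite characteristic series $G=\lambda_1\geq\lambda_2\geq\dots\geq\lambda_{c+1}=\{1\}$ acting trivially on every factor, I would finish with the stability filtration. Set $A_k=\{\phi\in\Aut_p(G)\mid \phi(g)g^{-1}\in\lambda_{i+k}\text{ for all }g\in\lambda_i\text{ and all }i\}$, giving a finite descending chain $\Aut_p(G)=A_1\supseteq A_2\supseteq\dots$ that terminates at $\{1\}$. Using the containments above one checks that for $\phi\in A_k$ the assignment $g\lambda_{i+1}\mapsto \phi(g)g^{-1}\lambda_{i+k+1}$ is a well-defined homomorphism $\lambda_i/\lambda_{i+1}\to\lambda_{i+k}/\lambda_{i+k+1}$, and that $\phi\mapsto$ (these maps, ranging over $i$) is a homomorphism of $A_k$ into $\prod_i\Hom(\lambda_i/\lambda_{i+1},\lambda_{i+k}/\lambda_{i+k+1})$ with kernel exactly $A_{k+1}$. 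Each target is a group of homomorphisms between elementary abelian $p$-groups, hence a finite elementary abelian $p$-group, so every quotient $A_k/A_{k+1}$ is a finite $p$-group. As $\Aut_p(G)$ is assembled from finitely many such quotients, it is a finite $p$-group, which is the desired conclusion.
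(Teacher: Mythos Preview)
Your proposal is correct, and indeed it is an honest unpacking of exactly the references the paper invokes. Note, however, that the paper itself offers no proof of this lemma at all: it simply records the statement as ``a classical result of P.~Hall (see \cite[5.3.2, 5.3.3]{robinson})'' and moves on. So there is nothing in the paper to compare your argument against beyond the citation.

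What you have written is essentially the content of those two results in Robinson: 5.3.2 is the Kaloujnine stability--group theorem (the stability group of a normal series is nilpotent, with the expected bounds), and 5.3.3 is the application to finite $p$-groups showing that automorphisms trivial on $G/\Phi(G)$ form a $p$-group. Your choice of the lower $p$-central series and the propagation step (triviality on $\lambda_1/\lambda_2$ forces triviality on every $\lambda_i/\lambda_{i+1}$ because the associated graded is generated in degree one) is the standard route, and your stability filtration $A_k$ is exactly the mechanism behind 5.3.2. The containment $[\lambda_i,\lambda_2]\subseteq\lambda_{i+2}$ that you flag as the delicate point follows from the three--subgroups lemma together with $[x,g^p]\equiv[x,g]^p$ modulo $[\lambda_i,G,G]$; it is worth making that explicit rather than appealing to the general $[\lambda_i,\lambda_j]\subseteq\lambda_{i+j}$, since only the case $j\le 2$ is actually used in your inductive step, and the general strong centrality of this particular series is a bit more to write down carefully. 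Otherwise the argument is complete.
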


Recall that if $\C$ is the class of all finite $p$-groups, then the corresponding pro-$\C$ topology on a group $G$ is referred to as the pro-$p$ topology on $G$. We say that a subset $X \subseteq G$ is $p$-closed in $G$ if it is closed in pro-$p(G)$. If group $G$ is $\C$-IAS then we say that $G$ is $p$-IAS, similarly for $p$-CS and $p$-Grossmanian groups. 

\begin{lemma}
	\label{p-CIAS virtually residually p}
	Let $G$ be a finitely generated $p$-IAS group. Then the group $\Out_p(G)$ is residually $p$-finite and, consequently, the group $\Out(G)$ is virtually residually $p$-finite.
\end{lemma}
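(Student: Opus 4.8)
The plan is to combine the $p$-IAS hypothesis with Hall's lemma (Lemma \ref{halls lemma}). To show that $\Out_p(G)$ is residually $p$-finite, I would start from an arbitrary non-trivial element of $\Out_p(G)$ and represent it by some $\phi \in \Aut_p(G) \setminus \Inn(G)$. In particular $\phi \in \Aut(G) \setminus \Inn(G)$, so since $G$ is $p$-IAS there is a characteristic subgroup $K \in \NC(G)$, with $G/K$ a finite $p$-group, such that the induced automorphism $\tilde{\kappa}(\phi) \in \Aut(G/K)$ is not inner.

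Next I would check that $\tilde{\kappa}$ restricts to a homomorphism $\Aut_p(G) \to \Aut_p(G/K)$ and carries $\Inn(G)$ into $\Inn(G/K)$, so that it descends to a homomorphism $\bar{\kappa} \colon \Out_p(G) \to \Out_p(G/K)$. The first containment is the only point requiring an argument: writing $q \colon G \to G/K$ for the projection and $\bar{K}_p = [G/K,G/K](G/K)_p$, surjectivity of $q$ gives $q(K_p) = \bar{K}_p$, so $q$ induces a surjection $G/K_p \to (G/K)/\bar{K}_p$. If $\phi$ fixes every coset $gK_p$, then applying this surjection shows that $\tilde{\kappa}(\phi)$ fixes every coset of $\bar{K}_p$, i.e. $\tilde{\kappa}(\phi) \in \Aut_p(G/K)$. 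That inner automorphisms map to inner automorphisms is immediate, and $\Inn(G/K) \leq \Aut_p(G/K)$ because inner automorphisms act trivially on every abelian quotient.

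Now, since $G/K$ is a finite $p$-group, Lemma \ref{halls lemma} gives that $\Aut_p(G/K)$ is a finite $p$-group, and hence so is its quotient $\Out_p(G/K)$. By the choice of $K$ we have $\tilde{\kappa}(\phi) \in \Aut_p(G/K) \setminus \Inn(G/K)$, so $\bar{\kappa}$ sends the class of $\phi$ to a non-trivial element of the finite $p$-group $\Out_p(G/K)$. As $\phi$ represented an arbitrary non-trivial element of $\Out_p(G)$, this exhibits $\Out_p(G)$ as residually $p$-finite. Finally, since $G$ is finitely generated, $\Out_p(G)$ has finite index in $\Out(G)$ (as recorded just before the statement), so $\Out(G)$ contains a residually $p$-finite subgroup of finite index and is therefore virtually residually $p$-finite.

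The step I expect to be the crux is the verification that $\tilde{\kappa}$ preserves the property of acting trivially on the first mod-$p$ homology, that is, $\tilde{\kappa}(\Aut_p(G)) \subseteq \Aut_p(G/K)$; everything else is either a direct invocation of the $p$-IAS hypothesis or of Hall's lemma.
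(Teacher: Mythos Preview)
Your proof is correct and follows essentially the same approach as the paper's: pick $\phi \in \Aut_p(G)\setminus\Inn(G)$, use the $p$-IAS hypothesis to get a characteristic co-$p$ subgroup $K$ with $\tilde{\kappa}(\phi)\notin\Inn(G/K)$, observe that $\tilde{\kappa}(\Aut_p(G))\leq \Aut_p(G/K)$, and apply Hall's lemma to conclude. The paper phrases the conclusion as ``$\Inn(G)$ is $p$-closed in $\Aut_p(G)$'' rather than explicitly descending to $\Out_p(G/K)$, and simply asserts the containment $\tilde{\pi}(\Aut_p(G))\leq\Aut_p(G/N)$ where you spell out the argument via the surjection $G/K_p\to (G/K)/\bar{K}_p$; otherwise the two proofs are identical.
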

\begin{proof}
	Let $\phi \in \Aut_p(G) \setminus \Inn(G)$ be arbitrary. By definition there is $N \in \mathcal{N}_{p}(G)$ characteristic in $G$ such that the natural projection $\pi \colon G \to G/N$ induces a homomorphism $\tilde{\pi} \colon \Aut(G) \to \Aut(G/N)$ such that $\tilde{\pi}(\phi) \not\in \Inn(G/N)$. By Lemma \ref{halls lemma} we see that $\Aut_p(G/N)$ is a finite $p$-group. Note that $\tilde{\pi}(\Aut_{p}(G)) \leq \Aut_p(G/N)$ and thus $\tilde{\pi}(\phi) \in \Aut_p(G/N) \setminus \Inn(G/N)$, therefore $\Inn(G)$ is $p$-closed in $\Aut_p(G)$ and consequently $\Out_p(G)$ is residually $p$-finite. As $G$ is finitely generated, $\Out_p(G)$ is of finite index in $\Out(G)$ and we see that $\Out(G)$ is virtually residually $p$-finite.
\end{proof}

This gives us everything we need to prove Theorem \ref{virtually residually p}.
\begin{proof}[Proof of Theorem \ref{virtually residually p}]
	Using Theorem \ref{CIS graph products} in the context of the class of all $p$-finite groups we see that the group $\Gamma\mathcal{G}$ is $p$-IAS. The rest follows by Lemma \ref{p-CIAS virtually residually p}. 
\end{proof}

Applying Proposition \ref{cias-direct} to the class of all $p$-finite groups we get the following $p$-analogue of Corollary \ref{ias direct}.
\begin{lemma}
\label{p-IAS direct}
	Let $A,B$ be finitely generated residually $p$-finite $p$-IAS groups. Then $A \times B$ is $p$-IAS and, consequently, $\Out_p(G)$ is residually $p$-finite and $\Out(G)$ is virtually residually $p$-finite.
\end{lemma}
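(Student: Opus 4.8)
The plan is to apply Proposition \ref{cias-direct} and Lemma \ref{p-CIAS virtually residually p} in sequence, since this lemma is precisely the specialisation of those two results to the class of finite $p$-groups. Throughout I would write $G = A \times B$ and let $\C$ denote the class of all finite $p$-groups, as suggested by the remark immediately preceding the statement.

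First I would verify that $\C$ satisfies the closure properties (c1) and (c2) required by Proposition \ref{cias-direct}. Property (c1) holds because any subgroup of a finite $p$-group is again a finite $p$-group, since its order divides a power of $p$; property (c2) holds because the order of a finite direct product of finite $p$-groups is a product of powers of $p$, hence again a power of $p$. With these closure properties in hand, the definitions in Section \ref{cias} tell us that the notions ``$\C$-IAS'' and ``residually-$\C$'' coincide respectively with ``$p$-IAS'' and ``residually $p$-finite'' for this particular $\C$.

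Next, since $A$ and $B$ are finitely generated, so is $G = A \times B$. By hypothesis $A$ and $B$ are finitely generated, residually $p$-finite, and $p$-IAS, which in the language of $\C$ means exactly that they are finitely generated, residually-$\C$, and $\C$-IAS. Proposition \ref{cias-direct} then applies verbatim and yields that $G = A \times B$ is $\C$-IAS, that is, $p$-IAS; this is the first assertion of the lemma. Finally I would invoke Lemma \ref{p-CIAS virtually residually p}: as $G$ is a finitely generated $p$-IAS group, that lemma delivers immediately that $\Out_p(G)$ is residually $p$-finite and that $\Out(G)$ is virtually residually $p$-finite.

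I do not expect a genuine obstacle here, since all the substantive content has already been established in the proofs of Proposition \ref{cias-direct} and Lemma \ref{p-CIAS virtually residually p}. The only point demanding any care is the elementary bookkeeping in the second paragraph, namely checking that the class of finite $p$-groups really does satisfy (c1) and (c2), so that the general machinery for an arbitrary class $\C$ is legitimately applicable in the pro-$p$ setting.
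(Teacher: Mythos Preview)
Your proposal is correct and matches the paper's approach exactly: the paper's proof simply says to apply Proposition \ref{cias-direct} with $\C$ the class of finite $p$-groups to obtain that $A\times B$ is $p$-IAS, and then invokes Lemma \ref{p-CIAS virtually residually p} for the remaining conclusions. Your additional verification of (c1), (c2) and finite generation of $A\times B$ is just a more explicit unpacking of the same argument.
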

\begin{proof}
	Applying Proposition \ref{cias-direct} to the case when $\C$ is the class of all finite $p$-groups we see that $A \times B$ is $p$-IAS. The rest follows by Lemma \ref{p-CIAS virtually residually p}.
\end{proof}

\begin{proof}[Proof of Corollary \ref{p-corollary}]
	Denote $G = \Gamma \mathcal{G}$. Let $C \subseteq V \Gamma$ be set of central vertices of $\Gamma$. Note that the induced subgraph $\Gamma_{V\Gamma \setminus C}$ does not contain central vertices and thus by Theorem \ref{virtually residually p} we see that the full subgroup $G_{V\Gamma \setminus C}$ is $p$-IAS. We see that $G$ splits as $G = G_{V \setminus C} \times \prod_{v \in C}G_V$, a direct product of $p$-IAS groups. The rest follows by Lemma \ref{p-IAS direct}
\end{proof}

Let $G$ be a group. Consider the natural homomorphism $\pi \colon G \to G/[G,G]$. Clearly $[G,G]$ is a characteristic subgroup of $G$ and thus $\pi$ induces a homomorphism $$\tilde{\pi} \colon \Aut(G) \to \Aut(G/[G,G]).$$ Note that $\Inn(G) \leq \ker(\tilde{\pi})$,  hence $\tilde{\pi}$ induces a homomorphism $$\pi^* \colon \Out(G) \to \Out(G/[G,G]).$$ The kernel of this homomorphism is the \emph{Torreli group} of $G$ ($\Tor(G)$), i.e. $\phi \in \Out(G)$  belongs to $\Tor(G)$ if and only if $\phi$ acts trivially on the Abelianisation of $G$. Note that $\Tor(G) \subseteq \Out_p(G)$ for every prime number $p$.

We say that a group $G$ is bi-orderable if there exist a total ordering $\preceq$ of $G$ such that if $f \preceq g$ then $cf \preceq cg$ and $fc \preceq gc$ for all $c,f,g \in G$.

\begin{proof}[Proof of Theorem \ref{torreli}]
	Every residually torsion-free nilpotent group is residually $p$-finite for every prime $p$ by \cite[Theorem 2.1]{gruenberg} and thus we see that $\Out_p(G)$ is residually $p$-finite by Theorem \ref{virtually residually p}. As discussed earlier, $\Tor(G) \leq \cap_{p\in \mathbb{P}}\Out_p(G)$, where $\mathbb{P}$ denotes the set of all prime numbers. Being residually $p$-finite is a hereditary property and thus $\Tor(G)$ is residually $p$-finite for every prime number $p$. Consequently, by \cite{rhemtulla} we see that $\Tor(G)$ is bi-orderable.
\end{proof}

\section{Open questions}
Let $A,B$ be finitely generated RF groups such that $\Out(A)$ and $\Out(B)$ are RF as well. As follows from Corollary \ref{ias direct}, if we assume that groups $A,B$ are IAS then $\Out(A \times B)$ is RF as well. However, what if we drop this assumption? What can be said about residual finiteness of $\Out(A \times B)$?
\begin{question}
\label{question 1}
	Let $A,B$ be finitely generated RF groups such that $\Out(A), \Out(B)$ are RF. Is $\Out(A\times B)$ RF?
\end{question}

Clearly, if every finitely generated RF group with $\Out(G)$ RF was IAS then the class of finitely generated groups with RF outer automorphism would be closed under taking direct products by Proposition \ref{cias-direct}. This naturally leads to another question. 
\begin{question}
\label{question 2}
	Is there a finitely generated RF group $G$ such that $\Out(G)$ is an infinite RF group but $G$ is not IAS?
\end{question}

\section*{Acknowledgements}
I would like to express my gratitude to Ashot Minasyan and Pavel Zalesskii for many insightful discussions. Also, I would like to thank the anonymous referee for pointing out that Theorem \ref{no central vertex implies property A} can be easily generalised to much broader class of graphs and many more useful suggestions.

\end{document}